\documentclass{amsart}

\usepackage{epsfig,color}
\usepackage[latin1]{inputenc}


\setlength{\textwidth}{150mm}
\setlength{\textheight}{220mm}
\setlength{\oddsidemargin}{20pt}     
\setlength{\evensidemargin}{6pt}
\setlength{\topmargin}{0pt}
\setlength{\parindent}{0pt}
\setlength{\parskip}{0.5ex}

\frenchspacing


\newtheorem{theorem}{Theorem}[section]
\newtheorem{lemma}[theorem]{Lemma}

\newtheorem{proposition}[theorem]{Proposition}
\newtheorem{definition}[theorem]{Definition}

\newtheorem{remark}[theorem]{Remark}

\numberwithin{equation}{section}

\DeclareMathOperator{\TR}{TR}
\DeclareMathOperator{\tr}{tr}

\newcommand{\cz}{{\mathbb C}}

\newcommand{\gz}{{\mathbb Z}}

\newcommand{\nz}{{\mathbb N}}
\newcommand{\rz}{{\mathbb R}}

\newcommand{\calC}{\mathcal{C}}
\newcommand{\calD}{\mathcal{D}}

\newcommand{\calL}{\mathcal{L}}

\newcommand{\calP}{\mathcal{P}}

\newcommand{\calS}{\mathcal{S}}

\newcommand{\forget}[1]{}

\newcommand{\ci}{\mathcal{C}^\infty}
\newcommand{\cicomp}{\mathcal{C}^\infty_{\text{\rm comp}}}
\newcommand{\cl}{\mathrm{cl}}
\newcommand{\dbar}{d\hspace*{-0.08em}\bar{}\hspace*{0.1em}}
 
\newcommand{\eps}{\varepsilon}
\newcommand{\entier}[1]{\mbox{$[\hspace*{-0.4ex}[ #1 ]\hspace*{-0.4ex}]$}}
\newcommand{\fpint}{{\int\hspace*{-10pt}-\hspace*{5pt}}}
\newcommand{\fpiint}{\fpint\!\!\!\!\fpint}
\newcommand{\im}{\text{\rm Im}\,}

\newcommand{\re}{\text{\rm Re}\,}

\newcommand{\spk}[1]{[#1]}

\newcommand{\st}{\mbox{\boldmath$\;|\;$\unboldmath}}
\newcommand{\smmu}{S^{\mu,m}}

\newcommand{\wt}{\widetilde}

\hyphenation{ma-ni-folds ma-ni-fold}


\begin{document}

\title{Determinants of Classical SG-Pseudodifferential Operators}

\author{L.\ Maniccia}

\author{E.\ Schrohe}
\address{Leibniz Universit\"at Hannover, Institut f\"ur Analysis, Welfengarten 1, 
30167 Hannover, Germany}
\email{schrohe@math.uni-hannover.de}

\author{J.\ Seiler}
\address{Universit\'a di Torino, Dipartimento di Matematica, V. Carlo Alberto 10, 10123 Torino, Italy}
\email{joerg.seiler@unito.it}

\subjclass[2010]{Primary: 47G30; Secondary: 47A10, 47L15, 58J42}

\keywords{Pseudodifferential operators, Kontsevich-Vishik trace, regularized determinant}

\begin{abstract}
We introduce a generalized trace functional $\TR$  in the spirit of Kontsevich and Vishik's canonical 
trace for classical $SG$-pseudodifferential operators 
on $\rz^n$ and suitable manifolds, using a finite-part integral 
regularization technique. This allows us to define a zeta-regularized determinant $\det A$ for 
parameter-elliptic operators $A\in S^{\mu,m}_\cl$, $\mu>0$, $m\ge0$. For $m=0$, the asymptotics 
of $\TR e^{-tA}$ as $t\to 0$ and of $\TR (\lambda-A)^{-k}$ as $|\lambda|\to\infty$ are derived. 
For suitable pairs $(A,A_0)$ we show that $\det A/\det A_0$ coincides with the so-called relative 
determinant $\det(A,A_0)$. 
\end{abstract}

\maketitle

\section{Introduction}\label{sec:zero}
In 1971, Ray and Singer \cite{RaSi} introduced a new notion of regularized 
determinants. It was based on Seeley's analysis of complex powers of elliptic 
operators \cite{Seel}: Assuming that $A$ is an invertible classical
pseudodifferential operator of positive order on a closed manifold 
and that there exists a ray 
$\{r\exp(i\phi):r>0\}$ in the complex plane (for some $0\le\phi<2\pi$), on 
which the principal symbol of $A$ has no eigenvalue, Seeley showed that the 
complex powers $A^z$, $z\in\cz$, can be defined. Moreover, he proved that the 
associated zeta function $\zeta_A$ given by $\zeta_A(z)=\tr(A^{-z})$ for  $\re (z)$
sufficiently large, extends to a meromorphic function on $\cz$ which is regular in $z=0$.
It therefore makes sense to let, as Ray and Singer suggested,   
$$\det A = \exp\Big(-\frac{d}{dz}\Big|_{z=0}\zeta_A(z)\Big)$$
thus extending the corresponding identity for finite matrices. 

Since then, regularized determinants have found applications in various areas of mathematics 
and physics. The articles \cite{BGKE}, \cite{HaZe}, \cite{Hawk}, \cite{KoVi2}, \cite{LoPa1}, 
\cite{Mull1}, \cite{Okik}, \cite{OPS}, \cite{RaSi}, \cite{Sher} give an impression of the diversity of 
applications. For an even more detailed account on the literature we refer the reader to 
M\"uller \cite{Mull}. 

This approach fails, if the underlying manifold is non-compact.
In this paper, we therefore focus on operators on $\rz^n$ with additional structure, 
the  classical $SG$-pseudodifferential operators (sometimes also called 
scattering operators). 
A symbol $a$ here is required to  satisfy the
estimates 
\begin{equation}\label{zeroi.B}
 |D^\alpha_\xi D^\beta_x a(x,\xi)|\le C_{\alpha,\beta}
 (1+|x|)^{m-|\beta|}(1+|\xi|)^{\mu-|\alpha|},\quad x,\xi\in\rz^n, 
\end{equation}
for all $\alpha,\beta\in\nz_0^n$ with suitable $\mu$ and $m$. We then write 
$a\in S^{\mu,m}$. Classical symbols additionally 
have asymptotic expansions with respect to both $x$ and $\xi$. 
For a short introduction  see the Appendix.
A typical example for such an operator is a differential operator 
$A(x,D)=\sum_{|\alpha|\le \mu}c_\alpha D^\alpha$ of order $\mu$, whose
coefficient functions $c_\alpha=c_\alpha(x)$ are classical symbols in $x$ of order $m$ 
on $\rz^n$.   

We first define the generalized trace functional $\TR$ via a finite part integral 
and study its basic properties.
We then turn to the case where $\mu>0$ and $m=0$ in \eqref{zeroi.B}. 
Assuming that the symbol $a(x,\xi)$ is parameter-elliptic with respect to the sector 
\begin{equation}\label{zeroi.C}
 \Lambda=\Lambda(\theta)=
 \{re^{i\varphi}\st r\ge 0\text{ and }\theta\le\varphi\le2\pi-\theta\}
\end{equation}
(where $0<\theta<\pi$ is fixed) and that $A=a(x,D)$ has no spectrum in $\Lambda$,
 we introduce a regularized zeta function $\zeta_A(z)=\TR(A^{-z})$ of $A$.
Using the description of the complex powers given in \cite{MSS}, we show that $\zeta_A$ 
is meromorphic in the complex plane and holomorphic in 0. 
Thus we can define the determinant of $A$ as  $\det A=\exp(-\zeta^\prime_A(0))$. 

In case $\mu,m>0$ in \eqref{zeroi.B}, suitable ellipticity assumptions will ensure that 
the complex powers $A^{-z}$ exist and are of trace class whenever the real part of $z$ is 
large enough. We can therefore define $\zeta_A(z)=\tr (A^{-z})$ for those $z$. 
Again, this function has a meromorphic extension to the whole complex plane. 
In general, it will have a simple pole at the origin. 
A natural definition for the determinant in this situation is 
\begin{equation}\label{zeroi.D}
 \mathrm{det}\,A=\exp\Big(-\mathrm{res}_{z=0}\frac{\zeta_A(z)}{z^2}\Big).
\end{equation}
Certain symmetry properties of the symbol will imply the regularity of $\zeta_A$
in zero. 
The usual definition of the zeta-regularized  determinant then applies and agrees
with \eqref{zeroi.D}. 

M\"uller \cite{Mull} introduced the concept of  the \textit{relative determinant}
$\det(A,A_0)$ for suitable pairs $(A,A_0)$ of self-adjoint operators. 
He assumes that the trace of the difference of the heat kernels of $A$ and $A_0$ exists 
and has a certain asymptotic behavior both as $t\to 0$ and $t\to\infty$. 
He then defines the \textit{relative $\zeta$-function} 
$$\zeta(z;A,A_0)=\frac1{\Gamma(z)}\int_0^\infty t^{z-1}\tr(e^{-tA}-e^{-tA_0})dt$$ 
and lets
 $$\det(A,A_0)=\exp\Big(-\frac{d}{dz}\Big|_{z=0}\zeta(z;A,A_0)\Big).$$
In case $A$ and $A_0$ are positive self-adjoint pseudodifferential operators on a closed 
manifold, this relative determinant coincides with the ratio of the $\zeta$-regularized 
determinants of $A$ and $A_0$ in the sense of Ray and Singer.

We obtain a similar statement for $SG$-pseudodifferential operators. 
If  $A$ and $A_0$ have symbols in $S^{\mu,0}$, $\mu>0$, and simultaneously satisfy 
our ellipticity assumptions as well as the assumptions for defining their relative determinant, 
then  
$$\mathrm{det}(A,A_0)=\frac{\det A}{\det A_0}.$$ 
To establish this relation we describe the asymptotic structure of 
$\TR(\lambda-A)^{-k}$ for sufficiently large $k\in\nz$ as $|\lambda|\to\infty$,
using results of Grubb and Seeley  \cite{GrSe}, and relate the short time asymptotics of 
$\TR e^{-tA}$ to the pole structure of the generalized zeta function of $A$. 

We finally point out how these results apply to classical SG-pseudodifferential operators
on manifolds with ends. 

It is a challenging question -- beyond the scope of the present article -- to what extent the results obtained here hold for larger classes of pseudodifferential operators on noncompact manifolds. 
Of particular interest are the general pseudodifferential 
calculi devised by Ammann, Lauter and Nistor \cite{ALN} and Monthubert \cite{Mont}.  
The zeta-regularization technique requires a very precise understanding 
of the complex powers of elliptic operators and their symbols, which has not yet been established for these operators. 
Ammann, Nistor, Lauter and Vasy  \cite{ALNV} have shown that, indeed, complex powers can be constructed in a fairly general setting, 
but the article does not contain sufficient information on their structure.  
Moreover, the zeta regularization approach relies on the existence of suitable regularized traces, which is not clear in general -- 
but a worthwhile subject in its own right, see e.g. Paycha \cite{Payc}. 

Another task is the construction of \textit{multiplicative} 
determinants in the spirit of Scott's residue determinant \cite{Scott} for SG-manifolds 
and their classification, in an extension of the work of Lescure and Paycha \cite{LePa}. 
This is left to future investigation.   \bigskip

\section{The Finite-part Integral, Regularized Trace-functionals and Determinants}\label{sec:zero1}
\subsection{The Finite-part Integral}\label{sec:zero1.1}
This concept allows the formal integration of functions
with polyhomogeneous expansions. For simplicity let 
$b=b(y)\in S^\ell_{\text{\rm cl}}(\rz^n)$, $\ell \in \cz$. Its finite part-integral is 
\begin{equation}\label{zero1.A}
 \fpint b(y)\,dy:=\lim_{R\to\infty}
 \Big\{\int_{|y|\le R}b(y)\,dy-\sum_{j=0}^{L}\frac{\beta_j}{n+\ell-j}R^{n+\ell-j}\Big\},
 \qquad \ell\notin\gz_{\ge-n},
\end{equation} 
and 
\begin{equation}\label{zero1.B}
 \fpint b(y)\,dy:=\lim_{R\to\infty}
 \Big\{\int_{|y|\le R}b(y)\,dy-\sum_{j=0}^{L-1}\frac{\beta_j}{n+\ell-j}R^{n+\ell-j}-
 \beta_{n+\ell}\ln R\Big\},
 \qquad \ell\in\gz_{\ge-n},
\end{equation} 
respectively, where $L=\entier{n+\re\ell}$ $($here, and in the following, $\entier{t}$ 
denotes the integer part of $t\in\rz)$ and 
\begin{equation}\label{zero1.C}
 \beta_j=\int_{|y|=1}b_{(\ell-j)}(y)\,d\sigma(y),\qquad j\in\nz_0,  
\end{equation}
where $d\sigma(y)$ is the surface measure on the unit sphere in $\rz^n$ and $b_{(\ell-j)}$ 
is the component of homogeneity $\ell-j$ in the asymptotic expansion of $b$. The finite-part 
integral is a continuous functional on $S^\ell_\cl(\rz^n)$. It coincides with the usual integral 
in case $\re \ell<-n$. 

To verify that the above limits exist choose a zero-excision function $\chi(y)$ which is 
identically 1 outside the unit ball and let $M\ge L$ be an integer. Then, for $R\ge1$, 
\begin{align*}
 \int_{|y|\le R}b(y)\,dy=&
 \int_{|y|\le R}\Big\{b(y)-\chi(y)\sum_{j=0}^{M} b_{(\ell-j)}(y)\Big\}\,dy+
 \sum_{j=0}^{M}\int_{|y|\le R} \chi(y)b_{(\ell-j)}(y)\,dy.
\end{align*} 
Introducing polar coordinates results in  
\begin{align*}
 \int_{|y|\le R} \chi(y)b_{(\ell-j)}(y)\,dy=\int_{|y|\le 1} \chi(y)b_{(\ell-j)}(y)\,dy
 +\beta_j\int_1^R r^{n+\ell-j-1}\,dr
\end{align*} 
from which the claim follows immediately. We also see the following: 

\begin{remark}\rm\label{zero1.1.5}
For $b\in S^\ell_\cl(\rz^n)$ and an arbitrary integer $M\ge\entier{n+\re \ell}$
\begin{align*}
  \fpint b(y)\,dy=
  \int\Big\{b(y)-\chi(y)\sum_{j=0}^{M} b_{(\ell-j)}(y)\Big\}\,dy
  +\sum_{j=0}^{M}
  \Big\{\int_{|y|\le 1} \chi(y)b_{(\ell-j)}(y)\,dy-\frac{\beta_j}{n+\ell-j}\Big\} 
\end{align*}
provided $\ell\not\in\gz_{\ge-n}$. 
Otherwise the term for $j=n+\ell$ in the last sum has to be replaced by 
$\displaystyle\int_{|y|\le1} \chi(y)b_{(-n)}(y)\,dy$. 
\end{remark}

Now let $a\in\smmu_\cl$. Recall that we denote by $a^{(\mu-k)}$, $a_{(m-j)}$, and 
$a^{(\mu-k)}_{(m-j)}$ the $\xi$-, $x$-, and mixed homogeneous 
components of $a$, cf.\ the Appendix. Finite part integration with respect to 
$\xi$ results in a classical symbol 
\begin{equation}\label{zero1.E}
 d(x)=\fpint a(x,\xi)\,\dbar\xi\;\in\; S^m_\cl(\rz^n_x) 
\end{equation} 
(where $\dbar\xi=(2\pi)^{-n}d\xi)$ with the homogeneous components 
\begin{equation*}
 d_{(m-j)}(x)= \fpint a_{(m-j)}(x,\xi)\,\dbar\xi, \qquad j\in\nz_0. 
\end{equation*} 

\begin{proposition}\label{zero1.3}
Let $a\in\smmu_\cl$ with arbitrary $\mu,m\in\cz$. 
Then, for any integers $N\ge\entier{n+\re\mu}$ and $M\ge\entier{n+\re m}$,  
\begin{align*}
 \fpint\!\!\!\!\fpint a\,dx\dbar\xi=&
 \iint\Big\{a-\kappa\sum_{k=0}^{N}a^{(\mu-k)}-\chi\sum_{j=0}^{M}a_{(m-j)}+
   \kappa\chi\sum_{j=0}^{M}\sum_{k=0}^{N}a^{(\mu-k)}_{(m-j)}\Big\}dx\dbar\xi+\\
 &+\sum_{j=0}^{M}
   \iint_{|x|\le1}\chi\Big\{a_{(m-j)}-\kappa\sum_{k=0}^{N}a^{(\mu-k)}_{(m-j)}
   \Big\}dx\dbar\xi+\\
 &+\sum_{k=0}^{N}
   \int_{|\xi|\le1}\int\kappa\Big\{a^{(\mu-k)}-\chi\sum_{j=0}^{M}a^{(\mu-k)}_{(m-j)}
   \Big\}dx\dbar\xi+\\
 &+\sum_{j=0}^{M}\sum_{k=0}^{N}
   \int_{|\xi|\le1}\int_{|x|\le1}\kappa\chi a^{(\mu-k)}_{(m-j)}dx\dbar\xi-\\
 &-\sum_{k=0}^{N}\gamma(n+\mu,k)\fpint\!\!\!\!\int_{|\xi|=1}a^{(\mu-k)}\,
  d\sigma(\xi)\dbar x-\\
 &-\sum_{j=0}^{M}\gamma(n+m,j)\fpint\!\!\!\!\int_{|x|=1}a_{(m-j)}\,
  d\sigma(x)\dbar\xi-\\
 &-\sum_{j=0}^{M}\sum_{k=0}^{N}\frac{\gamma(n+\mu,k)\gamma(n+m,j)}{(2\pi)^{n}}
  \int_{|\xi|=1}\int_{|x|=1}a^{(\mu-k)}_{(m-j)}\,d\sigma(x)d\sigma(\xi), 
\end{align*}
where we have set 
$\gamma(s,t)=\begin{cases}\frac{1}{s-t},&t\not=s\\ 0&\text{\rm else}\end{cases}$
and have  omitted the argument $(x,\xi)$. 
Moreover, $\chi(x)$ and $\kappa(\xi)$ denote zero-excision functions 
which are identically 1 outside the unit ball. 
\end{proposition}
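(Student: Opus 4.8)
The plan is to reduce the double finite-part integral to iterated one-variable finite-part integrals and then expand each of them using Remark~\ref{zero1.1.5}. First I would set $d(x)=\fpint a(x,\xi)\,\dbar\xi$, which by \eqref{zero1.E} lies in $S^m_\cl(\rz^n_x)$ with homogeneous components $d_{(m-j)}(x)=\fpint a_{(m-j)}(x,\xi)\,\dbar\xi$, so that
\[
  \fpiint a\,dx\,\dbar\xi=\fpint d(x)\,dx .
\]
Applying Remark~\ref{zero1.1.5} to $d$ with the integer $M\ge\entier{n+\re m}$ gives an expression involving $d(x)-\chi(x)\sum_{j=0}^{M}d_{(m-j)}(x)$ under an ordinary $x$-integral, plus boundary terms $\int_{|x|\le1}\chi d_{(m-j)}\,dx$ and the pole terms $\frac{\beta_j}{n+m-j}$, where now $\beta_j=\int_{|x|=1}d_{(m-j)}(x)\,d\sigma(x)=\fpint\!\int_{|x|=1}a_{(m-j)}(x,\xi)\,d\sigma(x)\,\dbar\xi$. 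This already produces the sixth line of the claimed formula (the $\gamma(n+m,j)$ term), once one checks that $\gamma(n+m,j)$ correctly encodes both the non-integer case and the logarithmic case $n+m-j=0$, for which the corresponding pole coefficient is replaced by the prescribed sphere integral of $d_{(-n)}$ and $\gamma$ is set to $0$.

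Next I would insert into each of the remaining pieces the $\xi$-expansion of $a$, i.e.\ apply Remark~\ref{zero1.1.5} in the $\xi$-variable with $N\ge\entier{n+\re\mu}$ to the symbols $a(x,\cdot)$, $a_{(m-j)}(x,\cdot)$, and (for the sphere terms) $a(\cdot)|_{|x|=1}$ viewed as symbols in $\xi$. Concretely: $d(x)-\chi(x)\sum_j d_{(m-j)}(x)$ is $\fpint$ in $\xi$ of $a-\chi\sum_j a_{(m-j)}$, and expanding that finite-part integral via Remark~\ref{zero1.1.5} in $\xi$ (with excision function $\kappa$) yields the first line (the full four-fold bracketed integrand), the third line ($\int_{|\xi|\le1}\int\kappa\{\cdots\}$), and the fifth line (the $\gamma(n+\mu,k)$ sphere term), noting that $d_{(m-j)}$ itself contributes its own $\xi$-expansion whose sphere coefficients are exactly $a^{(\mu-k)}_{(m-j)}$-integrals. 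Similarly, expanding $\int_{|x|\le1}\chi d_{(m-j)}(x)\,dx=\int_{|x|\le1}\chi\,\fpint a_{(m-j)}\,\dbar\xi\,dx$ in $\xi$ produces the second line and the fourth line together with the purely numerical seventh line carrying the product $\gamma(n+\mu,k)\gamma(n+m,j)$; the factor $(2\pi)^{-n}$ there comes from one leftover $\dbar\xi$ after the sphere integration in $\xi$ has removed the radial part.

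The main bookkeeping obstacle is the careful matching of the logarithmic/resonant cases in both variables and the justification that one may freely interchange the order of the two finite-part regularizations; I would handle this by observing that for $\re\mu<-n$ and $\re m<-n$ all finite parts are genuine absolutely convergent integrals, so Fubini applies and the identity is elementary there, and then invoke the continuity of the finite-part integral on $S^\ell_\cl$ (stated after \eqref{zero1.C}) together with holomorphic dependence on an auxiliary parameter — e.g.\ replacing $a$ by $(1+|x|^2)^{-s/2}(1+|\xi|^2)^{-s'/2}a$ and analytically continuing in $(s,s')$ — to extend the formula to general $\mu,m$, the $\gamma$-function encoding precisely the residues that appear in this continuation. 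The remaining work is then routine: collecting like terms and verifying that each bracketed integrand is $O(|x|^{-n-\eps})O(|\xi|^{-n-\eps})$ at infinity so that all the displayed ordinary integrals converge absolutely.
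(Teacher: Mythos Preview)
Your approach is correct and is precisely the ``straightforward'' computation the paper has in mind (the paper omits the proof entirely): iterate Remark~\ref{zero1.1.5}, first in $x$ applied to $d(x)=\fpint a(x,\xi)\,\dbar\xi$, then in $\xi$ applied to each of the resulting pieces, and regroup; the fifth and seventh lines emerge once you re-expand the $\fpint_x$ appearing in line five via Remark~\ref{zero1.1.5} and match terms.

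Two small clarifications. First, there is no ``interchange of the two finite-part regularizations'' to justify: the double finite-part integral is \emph{defined} as the iterated one (first $\xi$, then $x$, via \eqref{zero1.E}), and the Fubini identity \eqref{eq:fubini} is a \emph{consequence} of the symmetric form of the proposition, not an input. Second, your analytic-continuation fallback is unnecessary and, as stated, slightly off: the function $\gamma(s,t)$ is set to $0$ at $s=t$ by fiat and does not encode a residue---the direct iteration already handles the resonant cases $n+\mu\in\nz_0$ or $n+m\in\nz_0$ through the logarithmic clause of Remark~\ref{zero1.1.5}, so you can drop that paragraph.
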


The proof is straightforward, and we omit the details. In particular we see that, for $a\in\smmu_\cl$,
\begin{equation}\label{eq:fubini}
 \fpint\!\!\!\!\fpint a(x,\xi)\,\dbar\xi dx=\fpint\!\!\!\!\fpint a(x,\xi)\,dx\dbar\xi.
\end{equation}

\subsection{Relation to the Kontsevich-Vishik Density}\label{sec:zero1.2}
Let  $a\in\smmu_\cl$ with $\mu\notin\gz_{\ge-n}$. 
Kontsevich and Vishik  \cite{KoVi1}, \cite{KoVi2},
associated to the operator $a(x,D)$ a density, which is a 
regularization of the Schwartz kernel $K=K(x,y)$ on the diagonal $y=x$.
In fact, $K(x,y)=\widehat a(x,y-x)$, where $\widehat a(x,\cdot)$ is the Fourier transform 
of $a(x,\cdot)$ for fixed $x$. Outside the diagonal, $K$ is smooth, 
and the expansion of $a(x,\cdot)$ into terms $a^{(\mu-k)}(x,\cdot)$ of homogeneity 
$\mu-k$ in $\xi$ furnishes an expansion of $K$ into terms $K_{k}(x,\cdot)$, which are 
homogeneous of degree $-n-\mu+k$ in $y-x$; here it is important that $\mu\notin \gz_{\ge-n}$,
see Theorems 3.2.3 and 7.1.18 in \cite{Horm} for details.
The regularized value for $K(x,x)=\widehat a(x,0)$ defined by Kontsevich and Vishik is
\begin{eqnarray}\label{zero1.F}
\mathbf{d}(x)=\int\Big\{a(x,\xi)-\sum_{k=0}^{N}a^{(\mu-k)}(x,\xi)\Big\} \,\dbar\xi.
\end{eqnarray} 
Here,  $N=\entier{n+\re\mu}$, and the expression is to be interpreted as follows: With a $0$-excision 
function $\kappa$ write  
\begin{eqnarray}\label{zero1.J}
\mathbf d(x)= \int\Big\{a(x,\xi)-\kappa(\xi)\sum_{k=0}^{N}a^{(\mu-k)}(x,\xi)\Big\} \,\dbar\xi 
+\int\Big\{(\kappa(\xi)-1)\sum_{k=0}^{N}a^{(\mu-k)}(x,\xi)\Big\} \,\dbar\xi .
\end{eqnarray} 
The first term is a Lebesgue integral, while the second can be considered as 
\begin{eqnarray}\label{zero1.K}
(2\pi)^{-n}\sum_{k=0}^{N}\left\langle a^{(\mu-k)}(x,\cdot), (\kappa -1)
\right\rangle
\end{eqnarray}
with the duality between $\mathcal S'$ and $\mathcal S$.
For $k>\re \mu+n$,  $K_k(x,x)=\widehat a^{(\mu-k)}(x,0)$ vanishes; hence $N$ can be increased without changing the result.

The function $\mathbf{d}$ in fact is a classical symbol of order $m$ whose homogeneous 
components are 
\begin{equation*}
 \mathbf{d}_{(m-j)}(x)= \int\Big\{a_{(m-j)}(x,\xi)-\sum_{k=0}^{N-1}
 a^{(\mu-k)}_{(m-j)}(x,\xi)\Big\}\,\dbar\xi,\qquad j\in\nz_0,
\end{equation*} 
where the meaning of the integration is as in \eqref{zero1.F}. 

For a pseudodifferential operator of order $\mu\notin \gz_{\ge-n}$ 
on a manifold, the local expressions $\mathbf{d}(x)dx$ yield a globally defined 
density $\omega_{KV}$, the Kontsevich-Vishik density on the manifold, 
see Lesch \cite[Section 5]{Lesc} for more details.

\begin{proposition}\label{zero1.5}
Let $a\in\smmu_\cl$ with $\mu\notin\gz_{\ge-n}$ and $\mathbf{d}$ given by \eqref{zero1.F}. Then
\begin{equation*}
 \mathbf{d}(x)=\fpint a(x,\xi)\,\dbar\xi. 
\end{equation*}
\end{proposition}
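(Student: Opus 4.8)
The plan is to compare the finite-part integral $\fpint a(x,\xi)\,\dbar\xi$, as defined by \eqref{zero1.A} and \eqref{zero1.B} with $n$ variables of integration and order $\ell=\mu$, with the Kontsevich--Vishik prescription \eqref{zero1.F}--\eqref{zero1.K}, for fixed $x$. Since $\mu\notin\gz_{\ge-n}$, only the non-logarithmic definition \eqref{zero1.A} is relevant. By Remark \ref{zero1.1.5}, applied to $b(\xi)=a(x,\xi)\in S^\mu_\cl(\rz^n_\xi)$ (for fixed $x$) with the integer $M$ there chosen equal to $N=\entier{n+\re\mu}$, we have
\begin{align*}
 \fpint a(x,\xi)\,\dbar\xi
 =\int\Big\{a(x,\xi)-\kappa(\xi)\sum_{k=0}^{N}a^{(\mu-k)}(x,\xi)\Big\}\,\dbar\xi
 +\sum_{k=0}^{N}\Big\{\int_{|\xi|\le1}\kappa(\xi)a^{(\mu-k)}(x,\xi)\,\dbar\xi
 -\frac{\beta_k(x)}{(2\pi)^n(n+\mu-k)}\Big\},
\end{align*}
where $\beta_k(x)=\int_{|\xi|=1}a^{(\mu-k)}(x,\xi)\,d\sigma(\xi)$ and the factor $(2\pi)^{-n}$ accounts for $\dbar\xi$. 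Comparing with the split \eqref{zero1.J} of $\mathbf d(x)$, the first Lebesgue integrals agree verbatim, so it remains to identify, term by term in $k$, the ``correction'' quantity
\[
 \int_{|\xi|\le1}\kappa(\xi)a^{(\mu-k)}(x,\xi)\,\dbar\xi-\frac{\beta_k(x)}{(2\pi)^n(n+\mu-k)}
\]
with the distributional pairing $(2\pi)^{-n}\langle a^{(\mu-k)}(x,\cdot),\kappa-1\rangle$ appearing in \eqref{zero1.K}.

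The key computation is therefore purely about a single homogeneous function $h=a^{(\mu-k)}(x,\cdot)$ of degree $\mu-k$ on $\rz^n\setminus\{0\}$, with $\mu-k\notin\gz_{\ge-n}$, and how its homogeneous (distributional) extension pairs against $\kappa-1$, which is supported in the closed unit ball. I would write $\langle h,\kappa-1\rangle=\langle h,\kappa\rangle_{|\xi|\le1}-\langle h,1\rangle_{|\xi|\le1}$, where the first term is the honest Lebesgue integral $\int_{|\xi|\le1}\kappa(\xi)h(\xi)\,d\xi$ (it is finite since $\kappa$ vanishes near the origin), and the second term is the value on the compactly supported test function $\mathbf 1_{|\xi|\le1}$ (suitably smoothed, or directly by the standard homogeneous-extension formula) of the canonical homogeneous extension of $h$. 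Passing to polar coordinates and using homogeneity, $\int_{|\xi|\le1}h(\xi)\,d\xi=\beta_k(x)\int_0^1 r^{n+\mu-k-1}\,dr=\beta_k(x)/(n+\mu-k)$; this is exactly the standard formula for the homogeneous extension evaluated against the indicator, valid precisely because $n+\mu-k>0$ is not obstructed, i.e. $\mu-k\notin\gz_{\le-n}$, while for $\mu-k\notin\gz_{\ge-n}$ there is no anomaly. Dividing by $(2\pi)^n$ and summing over $k=0,\dots,N$ then yields precisely the correction terms above, proving the asserted equality. One should also note the consistency of the two ways the upper summation index is handled (Remark \ref{zero1.1.5} permits any $M\ge\entier{n+\re\mu}$; the Kontsevich--Vishik discussion permits increasing $N$ since $K_k(x,x)$ vanishes once $k>\re\mu+n$), so the identity is independent of the choice $N=\entier{n+\re\mu}$.

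The statement about the homogeneous components $\mathbf d_{(m-j)}$ is handled by the same argument applied to $a_{(m-j)}(x,\cdot)\in S^{\mu}_\cl(\rz^n_\xi)$, whose $\xi$-homogeneous components are the $a^{(\mu-k)}_{(m-j)}$; here one only needs $\mu\notin\gz_{\ge-n}$ for the same reason, and the bookkeeping index drops from $N$ to $N-1$ because the finite-part integral of $a_{(m-j)}$ in $\xi$ requires subtracting one fewer homogeneous component, matching the formula in the text. The main obstacle is not conceptual but notational: one must be careful that the ``$N$'' appearing in \eqref{zero1.F} (number of subtracted $\xi$-homogeneous terms for $a$ itself) and the ``$L$'' in \eqref{zero1.A} (namely $\entier{n+\re\mu}$) play the same role, that $\dbar\xi$ versus $d\xi$ factors of $(2\pi)^{-n}$ are tracked consistently, and that the distributional pairing in \eqref{zero1.K} is interpreted via the canonical homogeneous extension — which is exactly the content of the polar-coordinate identity above. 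Once these identifications are made, the proof reduces to the displayed algebra and is essentially immediate.
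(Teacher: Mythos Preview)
Your strategy coincides with the paper's: both compare the expression in Remark~\ref{zero1.1.5} with the split \eqref{zero1.J}--\eqref{zero1.K} and reduce everything to evaluating the distributional pairing $\langle a^{(\mu-k)}(x,\cdot),\kappa-1\rangle$ for each $k$. The paper computes this pairing directly from H\"ormander's explicit construction (3.2.23) of the homogeneous extension, via the operator $R_{\mu-k}$ and an integration by parts in the radial variable, arriving at
\[
\langle a^{(\mu-k)}(x,\cdot),\kappa-1\rangle
= -\frac{1}{n+\mu-k}\int_{|\xi|=1}a^{(\mu-k)}(x,\xi)\,d\sigma(\xi)
+\int_{|\xi|\le1}\kappa(\xi)\,a^{(\mu-k)}(x,\xi)\,d\xi,
\]
which is exactly the correction term in Remark~\ref{zero1.1.5}.

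Your shortcut --- writing $\langle h,\kappa-1\rangle=\int_{|\xi|\le1}\kappa h-\langle h,\mathbf 1_{|\xi|\le1}\rangle$ and evaluating the second term as $\beta_k(x)/(n+\mu-k)$ by polar coordinates --- is rigorous only when $\re(\mu-k)>-n$, so that $h=a^{(\mu-k)}(x,\cdot)$ is locally integrable and its homogeneous extension is given by ordinary integration. But for $k=N=\entier{n+\re\mu}$ with $n+\re\mu\in\nz_0$ and $\mu\notin\gz$ (e.g.\ $\mu=-n+i$, $k=0$) one has $\re(\mu-k)=-n$: then $|h(\xi)|\sim|\xi|^{-n}$ is not integrable near the origin, your radial integral $\int_0^1 r^{-1+i\,\Im\mu}\,dr$ diverges even as an improper integral, and the claim that the pairing equals $\beta_k/(n+\mu-k)$ is precisely what the paper's H\"ormander-based computation (or an analytic-continuation argument in the homogeneity degree, which you do not supply) establishes. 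Your parenthetical ``suitably smoothed, or directly by the standard homogeneous-extension formula'' is exactly where the actual work sits. Relatedly, the two conditions you run together --- ``$n+\mu-k>0$'' and ``$\mu-k\notin\gz_{\le-n}$'' --- are not equivalent: the first makes your integral converge, the second only guarantees existence of the extension, and for $k=N$ the first can fail while the second holds.
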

\begin{proof}
Recall how the function $a^{(\mu-k)}(x,\cdot)$ is extended to a homogeneous distribution 
of degree $\mu-k$ on $\rz^n$, cf.\ (3.2.23) in \cite{Horm}: For
an arbitrary $\psi\in\calC^\infty_{\rm comp}(\rz_+)$ satisfying 
$\displaystyle\int_0^\infty\psi(s)\frac{ds}{s}=1$ let
 $$\big\langle a^{(\mu-k)}(x,\cdot),\kappa-1\big\rangle=
     \big\langle a^{(\mu-k)}(x,\cdot),\psi(|\cdot|)R_{\mu-k}(\kappa-1)\big\rangle$$
with 
 $$R_{\mu-k}(\kappa-1)(\xi)=-\frac{|\xi|}{n+\mu-k}\int_0^\infty t^{n+\mu-k}
   \kappa^\prime(t|\xi|)\,dt.$$
In polar coordinates we obtain 
 $$\big\langle a^{(\mu-k)}(x,\cdot),\kappa-1\big\rangle=-\frac{1}{n+\mu-k}\int_{|\xi|=1}
   a^{(\mu-k)}(x,\xi)\,d\sigma(\xi)\;\int_0^\infty r^{n+\mu-k}\kappa^\prime(r)\,dr.$$
As $\kappa^\prime(r)=0$ for $r\ge1$, integration by parts yields that 
$\big\langle a^{(\mu-k)}(x,\cdot),\kappa-1\big\rangle$ equals 
\begin{align*}
 &\frac{1}{n+\mu-k}\int_{|\xi|=1}
   a^{(\mu-k)}(x,\xi)\,d\sigma(\xi)\Big\{-1+
   (n+\mu-k)\int_0^1 r^{n+\mu-k-1}\kappa(r)\,dr\Big\}\\
 =&-\frac{1}{n+\mu-k}\int_{|\xi|=1}a^{(\mu-k)}(x,\xi)\,d\sigma(\xi)+
  \int_{|\xi|\le1}\kappa(\xi)a^{(\mu-k)}(x,\xi)\,d\xi.
\end{align*}
Thus \eqref{zero1.K} coincides with  
 $$-\frac{1}{(2\pi)^n}\sum_{k=0}^{N-1}\frac{1}{n+\mu-k}
   \int_{|\xi|=1}a^{(\mu-k)}(x,\xi)\,d\sigma(\xi)+
   \sum_{k=0}^{N-1}\int_{|\xi|\le1}\kappa(\xi)a^{(\mu-k)}(x,\xi)\,\dbar\xi$$
and the claim follows by comparison with the formula given in Remark \ref{zero1.1.5}. 
\end{proof}

\subsection{Regularized Zeta Function and Determinant}\label{sec:zero1.3}

We shall proceed to define a continuous trace functional $\TR$ acting on different symbol classes.

\begin{definition}\rm\label{zero1.6}\rm 
For $\mu\in\cz\setminus\gz_{\ge-n}$ let
 $$\TR:S^{\mu,0}_\cl\longrightarrow\cz,\quad a\mapsto\fpiint a(x,\xi)\,dx\dbar\xi.$$
In case $a$ takes values in $k\times k$ matrices, we define $\TR a$ as the regularized integral 
over the matrix trace of $a$ without indicating this in the notation. 
\end{definition}

\begin{remark}\rm\label{rem:kernel}
\begin{itemize}
 \item[$($a$)$] 
   For $\re\mu<-n$ this simplifies to 
   \begin{eqnarray}\label{zero1.6A}
      \TR   a=\int\!\!\!\fpint a(x,\xi)\,dx\dbar\xi,
   \end{eqnarray}
   i.e., regularization is needed only in the $x$-variable. Interchanging order of integration, 
   cf.\ \eqref{eq:fubini}, we can also write 
    $$\TR   a=\fpint K_a(x,x)\,dx,$$
   with the kernel $K_a(x,y)$ of $a(x,D)$. 
 \item[$($b$)$] For  $\re\mu<-n$, \eqref{zero1.6A} extends to the slightly larger 
   class $S^{\mu,0}_{\cl(x)}:=S^0_\cl\big(\rz^n_x,S^\mu(\rz^n_\xi)\big)$ 
   $($cf. Section \ref{sec:zero5.0} for the definition of this space$)$. 
\end{itemize}
\end{remark}

We shall refer to $\TR$ as a
\textit{regularized trace}. This is justified, since it 
coincides with the usual trace of the trace-class operator $a(x,D)\in\calL(L^2(\rz^n))$ 
provided the orders of the symbol both in $x$ and $\xi$ are strictly less than $-n$. 

\begin{definition}\rm
Given holomorphic functions $m=m(z)$ and $\mu=\mu(z)$ on an open 
subset $U$ of $\cz$, we say that $\{a(z)=a(x,\xi;z)\st z\in U\}$ is a holomorphic
family of symbols in $S^{\mu(z),m(z)}_{\rm cl}$ on $U$, provided that 
$$z\mapsto [\xi]^{-\mu(z)}[x]^{-m(z)}a(x,\xi;z)$$
is a holomorphic map from $U$ to $S^{0,0}_{\rm cl}$. See the beginning of Section  $\ref{sec:zero5}$
for the definition of $[\cdot]$.
\end{definition}

The operators considered in \cite{ALNV} include the SG-pseudodifferential operators 
with symbols in $S^{\mu,0}$. The `special holomorphic families' introduced there  
correspond to holomorphic families in the above sense with the particular choice 
$m(z)\equiv 0$ and $\mu(z)=mz+d$.

Applying the regularized trace to holomorphic families of pseudodifferential operators 
yields meromorphic functions on the complex plane. The pole structure can be described 
with the help of  the explicit formulas for the finite-part integral in Proposition 
\ref{zero1.3}. 

\begin{theorem}\label{zero1.7}
Let $a(x,\xi;z)\in S^{\mu(z),0}_\cl$ be a holomorphic family on an open subset 
$U$ of $\cz$ with $\mu\not\equiv{\rm const}$ and 
 $$\calP=\{z\in U \st \mu(z)+n\in\nz_0\}.$$
Then $z\mapsto\TR a(z)$ is a meromorphic function on $U$ with poles at most 
in $\calP$. If $z_0\in\calP$ then  
\begin{align}\label{zero1.I}
\begin{split}
\TR a(z)\equiv&\,   
 \frac{(2\pi)^{-n}}{\mu(z_0)-\mu(z)}\fpint\!\!\!\!\int_{|\xi|=1} 
   a^{(\mu(z)-\mu(z_0)-n)}(x,\xi;z)\,d\sigma(\xi)dx+\\
&+\frac{(2\pi)^{-n}}{\mu(z_0)-\mu(z)}\sum_{j=0}^{n-1}\frac{1}{n-j}
   \int_{|\xi|=1}\int_{|x|=1}
   a^{(\mu(z)-\mu(z_0)-n)}_{(-j)}(x,\xi;z)\,d\sigma(x)d\sigma(\xi)
\end{split}
\end{align}
 near $z_0$, modulo a function which is holomorphic in $z_0$.
\end{theorem}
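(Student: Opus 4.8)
The plan is to reduce everything to the explicit formula for the double finite-part integral in Proposition~\ref{zero1.3}, applied to the symbol $a(z)$, and to track which of the many terms there can fail to be holomorphic at a point $z_0\in\calP$. First I would observe that, since the family is holomorphic in the sense defined above, the homogeneous components $a^{(\mu(z)-k)}$, $a_{(-j)}$ and $a^{(\mu(z)-k)}_{(-j)}$ all depend holomorphically on $z$ (with values in the appropriate spaces of functions on spheres, or on $\rz^n\times\rz^n$), and the orders in $x$ are constantly $0$ while the order in $\xi$ is $\mu(z)$. Fix $z_0\in\calP$, so $n+\mu(z_0)=:K\in\nz_0$. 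Choose the integers $N\ge\entier{n+\re\mu(z)}$ and $M\ge n$ in Proposition~\ref{zero1.3} \emph{uniformly} on a small neighborhood of $z_0$ (possible by shrinking $U$), and large enough that $N>K$. With that choice, inspect the seven groups of terms in the formula of Proposition~\ref{zero1.3}:

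The first four groups (the genuine Lebesgue integrals over $\rz^n\times\rz^n$, over $|x|\le1$, over $|\xi|\le1$, and over $|x|\le1,|\xi|\le1$) are holomorphic in $z$ near $z_0$, because after subtracting enough homogeneous components each integrand is integrable with a bound locally uniform in $z$, and holomorphy passes through the integral by Morera/dominated convergence. The term $-\sum_{j=0}^{M}\gamma(n+m,j)(\cdots)$ with $m\equiv0$ involves $\gamma(n,j)=1/(n-j)$ for $j\ne n$ and $0$ for $j=n$; since this factor is a \emph{constant} (independent of $z$) and the integrals $\fpint\!\int_{|x|=1}a_{(-j)}\,d\sigma(x)\dbar\xi$ are holomorphic, this group is holomorphic at $z_0$ too. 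The last group, $-\sum_{j,k}\frac{\gamma(n+\mu(z),k)\gamma(n,j)}{(2\pi)^n}\int_{|\xi|=1}\int_{|x|=1}a^{(\mu(z)-k)}_{(-j)}\,d\sigma(x)d\sigma(\xi)$, has a potential pole only from the factor $\gamma(n+\mu(z),k)=\frac{1}{n+\mu(z)-k}$, which is singular at $z_0$ precisely when $k=n+\mu(z_0)=K$; so the only non-holomorphic contribution here is the $k=K$ summand, producing $-\frac{(2\pi)^{-n}}{\mu(z_0)-\mu(z)}\sum_{j=0}^{n-1}\frac{1}{n-j}\int_{|\xi|=1}\int_{|x|=1}a^{(\mu(z)-K)}_{(-j)}\,d\sigma(x)d\sigma(\xi)$ after writing $n+\mu(z)-K=\mu(z)-\mu(z_0)$, which matches the second line of \eqref{zero1.I}. (Here I use $\mu\not\equiv\text{const}$ so that $\mu(z)-\mu(z_0)$ has an isolated zero at $z_0$ and $\TR a(z)$ is genuinely meromorphic, not identically infinite.) The fifth group $-\sum_{k=0}^{N}\gamma(n+\mu(z),k)\fpint\!\int_{|\xi|=1}a^{(\mu(z)-k)}\,d\sigma(\xi)\dbar x$ likewise has a pole only from the $k=K$ summand, contributing $-\frac{(2\pi)^{-n}}{\mu(z_0)-\mu(z)}\fpint\!\int_{|\xi|=1}a^{(\mu(z)-K)}(x,\xi;z)\,d\sigma(\xi)\,dx$ (absorbing $\dbar x=(2\pi)^{-n}dx$, noting the $x$-order is $0<-(-n)$ is false in general, but the finite-part in $x$ is still defined and holomorphic), which matches the first line of \eqref{zero1.I} up to sign; I would double-check the sign bookkeeping against $a^{(\mu(z)-\mu(z_0)-n)}=a^{(\mu(z)-K)}$.

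The main obstacle, and the place to be careful, is the fifth-group term: I must confirm that the \emph{remaining} factor $\fpint\!\int_{|\xi|=1}a^{(\mu(z)-k)}(x,\xi;z)\,d\sigma(\xi)\,\dbar x$ is holomorphic in $z$ near $z_0$ even for $k=K$ — i.e. that the finite-part integration in $x$ does not introduce a second singularity. Since the $x$-order of this expression is $0$ (constant in $z$) and $0\notin\gz_{\ge -n}$ only when $n=0$, for $n\ge1$ the relevant integer threshold $\entier{n+0}=n$ is fixed, so the finite-part prescription \eqref{zero1.A}–\eqref{zero1.C} is stable and, by the continuity of $\fpint$ on $S^0_\cl(\rz^n)$ together with the holomorphy of $z\mapsto a^{(\mu(z)-k)}(\cdot,\cdot;z)$ as a symbol in $x$, the map $z\mapsto\fpint\!\int_{|\xi|=1}a^{(\mu(z)-k)}\,d\sigma(\xi)\,\dbar x$ is holomorphic. (If $n=0$ the statement is vacuous or trivial.) Assembling the two singular contributions gives exactly the right-hand side of \eqref{zero1.I}, and all other terms being holomorphic at $z_0$ shows that $\TR a(z)$ minus that right-hand side extends holomorphically across $z_0$; since $z_0\in\calP$ was arbitrary and $\TR a$ is holomorphic off $\calP$ (all terms in Proposition~\ref{zero1.3} being holomorphic there), $z\mapsto\TR a(z)$ is meromorphic on $U$ with poles confined to $\calP$, completing the proof.
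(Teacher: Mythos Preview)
Your approach is exactly the one the paper intends: the paper gives no explicit proof of Theorem~\ref{zero1.7} but says that ``the pole structure can be described with the help of the explicit formulas for the finite-part integral in Proposition~\ref{zero1.3}'', and you carry this out by inspecting each of the seven groups of terms.

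There is, however, a genuine gap in your treatment of the sixth group
\[
 -\sum_{j=0}^{M}\gamma(n,j)\,\fpint\!\!\int_{|x|=1}a_{(-j)}(x,\xi;z)\,d\sigma(x)\,\dbar\xi.
\]
You argue it is holomorphic because the coefficients $\gamma(n,j)$ are constants in $z$ and the remaining finite-part integrals are holomorphic. The second assertion is false. After integrating over $|x|=1$ you obtain a classical symbol $c_j(\xi;z)\in S^{\mu(z)}_\cl(\rz^n_\xi)$, and the finite-part integral $\fpint c_j\,\dbar\xi$ is taken in the $\xi$-variable, where the order is $\mu(z)$, \emph{not} a constant. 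By Remark~\ref{zero1.1.5} applied in $\xi$ one has, for $z\neq z_0$,
\[
 \fpint c_j(\xi;z)\,\dbar\xi=\text{(holom.)}-\sum_{k=0}^{N}\frac{(2\pi)^{-n}}{n+\mu(z)-k}
 \int_{|\xi|=1}\int_{|x|=1}a^{(\mu(z)-k)}_{(-j)}\,d\sigma(x)\,d\sigma(\xi),
\]
and the summand $k=K=n+\mu(z_0)$ carries a pole $\frac{1}{\mu(z)-\mu(z_0)}$. Thus the sixth group contributes a singular term of exactly the same shape as the seventh group, and you must take it into account before matching with \eqref{zero1.I}. (Compare with your own discussion of the fifth group, where you correctly observe that the \emph{outer} finite-part integral is in $x$, of constant order $0$, and therefore holomorphic; for the sixth group the outer $\fpint$ is in $\xi$, of variable order $\mu(z)$, and the same reasoning does not apply.)

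Two smaller points: your sign for the fifth-group contribution is actually correct as $+\frac{1}{\mu(z_0)-\mu(z)}$, since $-\gamma(n+\mu(z),K)=-\frac{1}{\mu(z)-\mu(z_0)}=+\frac{1}{\mu(z_0)-\mu(z)}$; and it suffices to take $M=n$ (the minimal choice), which makes the $j$-sums run precisely over $0,\ldots,n-1$ as in \eqref{zero1.I}.
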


In particular, $z\mapsto \TR a(z)$ is meromorphic in $U$, and the order of a pole 
$z_0\in\calP$ is at most equal to the multiplicity of the zero of 
$\mu(z)-\mu(z_0)$ in $z_0$. We obtain an important special case 
by choosing the holomorphic family as the symbols of the complex powers of 
an elliptic operator; this leads to the so-called regularized zeta function: 

\begin{definition}\rm\label{zero1.8}
Let $a\in S^{\mu,0}_\cl$, $\mu>0$, be $\Lambda$-elliptic, cf.\ Section {\rm\ref{sec:zero5.2}}, 
and assume that the resolvent of $A:=a(x,D)$ exists in the whole sector $\Lambda$. 
Denote by $a(z)$ the symbol of $A^{-z}$.
We call $\zeta_A(z):=\TR a(z)$ the zeta function of $A$. 
\end{definition}

By Theorem \ref{zero1.7},  $\zeta_A$ is meromorphic on 
$\cz$ with simple poles at most in the points $z=\frac{n-k}{\mu}$, $k\in\nz_0$. 
Moreover, $z=0$ is a removable singularity. In fact, since $A^0=1$, the integral 
expressions in \eqref{zero1.I} are holomorphic functions that vanish in zero, while 
the factor $\frac{1}{\mu z}$ has a simple pole in zero. In particular, we can evaluate 
the derivative of the zeta function in $z=0$, and introduce the determinant 
of $A$:

\begin{definition}\rm\label{zero1.9}
Let $a\in S^{\mu,0}_\cl$, $\mu>0$, be $\Lambda$-elliptic, and 
assume that the resolvent of $A:=a(x,D)$ exists in the whole sector $\Lambda$. Then define 
 $${\rm det}\,A:=\exp\Big(-\frac{d}{dz}\Big|_{z=0}\zeta_A(z)\Big).$$
\end{definition}

A simple example is that of a $\Lambda$-elliptic symbol $a=a(\xi)\in S^{\mu,0}_\cl$  independent 
of $x$. In this case  $$\mathrm{det}\,a(D)=1,$$
since the symbol of the $A^z$ is also independent of $x$ and hence its regularized trace vanishes 
$($since the finite part-integral of a constant equals zero$)$.

\section{The Resolvent and the Asymptotics of its Regularized Trace}\label{sec:zero2}

The purpose of this section is to study the asymptotic behavior of 
$\TR(\lambda-a(x,D))^{-k}$  as $|\lambda|\to\infty$ for a $\Lambda$-elliptic symbol 
$a(x,\xi)\in S^{\mu,0}_\cl$, where $\mu\in\nz$ and $k\mu>n$. The following 
theorem holds: 

\begin{theorem}\label{zero2.1}
Let $a(x,\xi)\in S^{\mu,0}_\cl$ with $\mu\in\nz$ be a $\Lambda$-elliptic symbol. 
Let $k\in\nz$ such that $k\mu>n$. Then $(\lambda-a(x,D))^{-k}$ exists for each  
$\lambda\in\Lambda$ of sufficiently large absolute value and is a pseudodifferential 
operator with symbol in $S^{-k\mu,0}_{\cl(x)}$. Moreover, 
for suitable constants $c_j,c_j^\prime,c_j^{\prime\prime}\in\cz$,  
\begin{equation}\label{zero2.A}
\TR(\lambda-a(x,D))^{-k}\sim 
 \sum_{j=0}^\infty c_j\lambda^{\frac{n-j}{\mu}-k}
 +\sum_{j=0}^\infty \big(c_j^\prime\log\lambda+c_j^{\prime\prime}\big)
 \lambda^{-j-k}
\end{equation}
uniformly as $|\lambda|\to\infty$ in $\Lambda$. 
\end{theorem}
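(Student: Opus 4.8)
The plan is to combine the parametrix construction for the resolvent $(\lambda-a(x,D))^{-1}$ in the parameter-dependent $SG$-calculus with the structural description of the finite-part integral from Proposition~\ref{zero1.3}, treating $\lambda$ as a large parameter along the lines of Grubb--Seeley~\cite{GrSe}. First I would recall that $\Lambda$-ellipticity of $a\in S^{\mu,0}_\cl$ (Section~\ref{sec:zero5.2}) yields a parameter-dependent parametrix $b(x,\xi;\lambda)$ for $\lambda-a(x,D)$, so that for $|\lambda|$ large in $\Lambda$ the operator $(\lambda-a(x,D))^{-1}$ exists and is a pseudodifferential operator whose symbol lies in $S^{-\mu,0}_{\cl(x)}$, with $(\lambda-|\xi|^\mu)^{-1}$-type behavior in $\lambda$; raising to the $k$-th power gives a symbol in $S^{-k\mu,0}_{\cl(x)}$, which since $k\mu>n$ is of the class to which $\TR$ (in the extended sense of Remark~\ref{rem:kernel}(b)) applies. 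This settles the first assertion of the theorem and reduces everything to analyzing $\TR$ of this $\lambda$-dependent symbol.

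Next I would substitute the explicit formula for the finite-part integral. By Remark~\ref{rem:kernel}(a), since the $\xi$-order $-k\mu<-n$, only the $x$-regularization is active: $\TR(\lambda-a(x,D))^{-k}=\fpint\big(\int r(x,\xi;\lambda)\,\dbar\xi\big)\,dx$, where $r$ is the symbol of the $k$-th resolvent power. The inner integral $d(x;\lambda)=\int r(x,\xi;\lambda)\,\dbar\xi$ is a classical symbol of order $0$ in $x$, and the point is to track its dependence on $\lambda$. Using the parameter-dependent symbol expansion one writes $r(x,\xi;\lambda)=\sum_{\ell} r_\ell(x,\xi;\lambda)$ with $r_\ell$ homogeneous of degree $-k\mu-\ell$ in $(\xi,\lambda^{1/\mu})$ jointly (the quasi-homogeneity coming from the natural grading of the parameter-elliptic calculus), plus a remainder that is integrable and contributes only to the $O(\lambda^{-\infty})$ tail after enough terms. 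For each homogeneous piece, the substitution $\xi\mapsto\lambda^{1/\mu}\xi$ in $\int r_\ell(x,\xi;\lambda)\,\dbar\xi$ produces a factor $\lambda^{(n-\ell)/\mu-k}$ times an $x$-symbol independent of $\lambda$; the logarithmic terms $c_j''\log\lambda\,\lambda^{-j-k}$ arise exactly when $(n-\ell)/\mu-k$ hits a non-positive integer, i.e. when the $\xi$-integral of the corresponding homogeneous component is itself a finite-part integral of a symbol of critical integer order, producing a $\log$ from the defining formula \eqref{zero1.B} — this is the standard Grubb--Seeley phenomenon and the mechanism producing the two sums in \eqref{zero2.A}.

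The remaining step is to apply $\fpint\,dx$ to $d(x;\lambda)$. Since $d(\cdot;\lambda)$ is a classical symbol of order $0$ in $x$ with an asymptotic expansion whose homogeneous components depend on $\lambda$ through the scalars described above, Proposition~\ref{zero1.3} (in the degenerate form where $N$ is trivial because the $\xi$-regularization is inactive, so only the $x$-terms and the $x$-surface-integral corrections survive) expresses $\fpint d(x;\lambda)\,dx$ as a Lebesgue integral of a subtracted symbol plus finitely many boundary correction terms, each of which inherits the $\lambda$-power/$\log$ structure from $d$. One must also verify that differentiation and integration in $\lambda$ commute with these operations and that the asymptotic expansion holds uniformly in the closed sector $\Lambda$ away from the origin, which follows from the uniformity built into the parameter-dependent calculus and the continuity of the finite-part integral functional on the relevant symbol spaces. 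Finally, one checks that the expansion may be differentiated term by term (this is standard once one knows it holds with uniform remainder estimates that themselves admit such expansions, by applying the result to $\partial_\lambda$-derivatives of the resolvent power, which are again resolvent powers of higher order).

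The main obstacle I anticipate is the bookkeeping of the two distinct sources of $\lambda$-powers: the ``geometric'' powers $\lambda^{(n-j)/\mu-k}$ coming from the homogeneity of the $\xi$-integrals, and the ``logarithmic'' powers $\lambda^{-j-k}$ (with possible $\log\lambda$) coming both from critical-order finite-part integrations in $\xi$ and from the finite-part integration in $x$ of symbols of critical integer order; when $\mu$ divides $n-j$ these two series overlap, and one has to argue carefully that the coefficients combine consistently and that no spurious higher-power logarithms appear. Keeping the parameter-dependent homogeneity grading straight through both the $\xi$- and $x$-finite-part regularizations — rather than the calculation of any individual integral — is where the real work lies.
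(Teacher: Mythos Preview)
Your overall plan matches the paper's: establish the expansion of the diagonal kernel $K(x,x,\lambda)$ via the Grubb--Seeley weakly polyhomogeneous parametrix (this is Theorem~\ref{zero2.2}, with coefficients in $S^0_\cl(\rz^n_x)$), then apply the $x$-finite-part integral to convert the symbol-valued coefficients to scalars.

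Your explanation of the $\log\lambda$ mechanism, however, is off in a way that would cause trouble if you tried to carry it out. Since $k\mu>n$, the $\xi$-integral of the symbol of $(\lambda-A)^{-k}$ is an honest Lebesgue integral; no $\xi$-finite-part regularization enters, and formula~\eqref{zero1.B} is not invoked there. The $\log\lambda$ terms arise instead because the components $r_\ell$ are homogeneous in $(\xi,\lambda^{1/\mu})$ only for $|\xi|\ge 1$: the deviation from global homogeneity is encoded in the limit symbols~\eqref{zero2.A.5}, and it is exactly this that produces the $\log\sigma$ contributions in Grubb--Seeley's Theorem~2.1 (restated here as~\eqref{zero2.B}). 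Similarly, the $x$-finite-part integral of a $\lambda$-independent element of $S^0_\cl(\rz^n_x)$ is just a number and introduces no further $\log\lambda$; your anticipated ``second source'' of logarithms does not occur. The bookkeeping you worry about is therefore simpler than you fear: all $\log\lambda$ terms are already present in the kernel expansion of Theorem~\ref{zero2.2}, and the $x$-regularization merely converts each coefficient function to a constant.
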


The symbol $\sim$ in \eqref{zero2.A} means that, for each $N\in\nz_0$, 
 $$\TR(\lambda-a(x,D))^{-k}-  
 \sum_{j=0}^{N-1} c_j\lambda^{\frac{n-j}{\mu}-k}
 -\sum_{j=0}^{N-1} \big(c_j^\prime\log\lambda+c_j^{\prime\prime}\big)
 \lambda^{-j-k}=O(|\lambda|^{-N/\mu}).$$

For the proof of Theorem \ref{zero2.1} we shall write 
 $$\TR(\lambda-a(x,D))^{-k}=\fpint K(x,x,\lambda)\,dx$$ 
with the kernel $K(x,y,\lambda)$ of $(\lambda-a(x,D))^{-k}$, 
cf.\ Remark \ref{rem:kernel}, 
and establish the following asymptotic structure of the kernel: 

\begin{theorem}\label{zero2.2}
Let $a(x,\xi)\in S^{\mu,0}_\cl$ with $\mu\in\nz$ be a $\Lambda$-elliptic symbol. 
Let $k\in\nz$ such that $k\mu>n$ and $K(x,y,\lambda)$ be the kernel of  
$(\lambda-a(x,D))^{-k}$. 
Then there exist symbols $c_j,c_j^\prime,c_j^{\prime\prime}\in S^0_\cl(\rz^n)$, 
$j\in\nz_0$, such that, for each $N\in\nz_0$, 
 $$|\lambda|^{N/\mu}
   \Big\{
    K(x,x,\lambda)- \sum_{j=0}^{N-1} c_j(x)\lambda^{\frac{n-j}{\mu}-k}
    -\sum_{j=0}^{N-1} \big(c_j^\prime(x)\log\lambda+c_j^{\prime\prime}(x)\big)
    \lambda^{-j-k}
   \Big\}
   \in S^0_\cl(\rz^n_x)$$
is uniformly bounded as $|\lambda|\to\infty$ in $\Lambda$. 
\end{theorem}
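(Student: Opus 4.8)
\textbf{Proof plan for Theorem \ref{zero2.2}.}
The strategy is to obtain the kernel $K(x,y,\lambda)$ of $(\lambda-a(x,D))^{-k}$ from the parameter-dependent symbol calculus and then to localize the trace computation on the diagonal to the region $|x|\le R$, where the SG-structure in $x$ produces a classical symbol in $x$ with values in the parameter-dependent $\xi$-calculus. First I would invoke the resolvent construction for $\Lambda$-elliptic SG-operators: since $a\in S^{\mu,0}_{\cl}$ with $\mu\in\nz$ is $\Lambda$-elliptic and the resolvent exists for large $|\lambda|$ in $\Lambda$, standard parametrix constructions (in the parameter-dependent SG-calculus, cf.\ \cite{MSS}, \cite{GrSe}) give $(\lambda-a(x,D))^{-k}$ as a pseudodifferential operator whose symbol $b(x,\xi,\lambda)$ lies in $S^{-k\mu,0}_{\cl(x)}$, uniformly for $\lambda\in\Lambda$, $|\lambda|$ large; here one treats $\lambda^{1/\mu}$ as an additional covariable and uses anisotropic homogeneity. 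The diagonal kernel is then $K(x,x,\lambda)=\int b(x,\xi,\lambda)\,\dbar\xi$, an honest Lebesgue integral in $\xi$ because $-k\mu<-n$.

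The heart of the matter is the asymptotic expansion of $\int b(x,\xi,\lambda)\,\dbar\xi$ as $|\lambda|\to\infty$, with remainder estimates that are uniform in $x$ in the sense of the symbol norms of $S^0_\cl(\rz^n_x)$. The plan is to split the $\xi$-integral at $|\xi|=|\lambda|^{1/\mu}$ (or use the homogeneity-in-$(\xi,\lambda^{1/\mu})$ structure of the homogeneous components $b_{(-k\mu-\ell)}$ of $b$) and rescale $\xi=|\lambda|^{1/\mu}\eta$. Each homogeneous component contributes, after rescaling, a term of the form (constant in $\lambda$)$\cdot\lambda^{(n-\ell)/\mu-k}$ coming from the region near the origin, plus, when $n-\ell\in\mu\gz$ makes the $\eta$-integral logarithmically divergent, a term $\lambda^{-j-k}(c_j'(x)\log\lambda+c_j''(x))$; the non-exceptional exponents are absorbed into the first sum. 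This is exactly the mechanism behind the two sums in \eqref{zero2.A}: the first, with exponents $\frac{n-j}{\mu}-k$, records the generic scaling of the homogeneous symbol components, and the second, with integer-shifted exponents and a $\log$, records the resonances. The coefficients $c_j(x),c_j'(x),c_j''(x)$ are then explicit $\xi$-integrals of the (finite-part regularized) homogeneous components of $b$, hence classical symbols of order $0$ in $x$ by the mapping properties of finite-part integration established in \eqref{zero1.E}. Finally, to control the remainder one uses that, after subtracting the first $N$ homogeneous components, the leftover symbol lies in $S^{-k\mu-N,0}_{\cl(x)}$ with the correct parameter-dependence, so its diagonal $\xi$-integral is $O(|\lambda|^{-N/\mu})$ in every $S^0_\cl(\rz^n_x)$-seminorm; differentiating in $x$ commutes with all of this because the SG-calculus is closed under $x$-derivatives and the estimates are uniform.

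I expect the main obstacle to be the \emph{uniformity in $x$} of the remainder estimates, i.e.\ verifying that all constants appearing in the expansion and remainder bounds are controlled by the $S^0_\cl(\rz^n_x)$-seminorms of the resolvent symbol and do not degenerate as $|x|\to\infty$. This requires keeping careful track of the $x$-dependence throughout the parameter-dependent parametrix construction — in particular, that the $\Lambda$-ellipticity is used only through estimates that are uniform in $x$ (which is where the $m=0$ hypothesis is essential: the symbol is bounded, not growing, in $x$) — and that the splitting/rescaling argument in $\xi$ produces $x$-derivative bounds of the right polynomial type. A secondary technical point is correctly identifying the exceptional exponents and checking that they genuinely produce a $\log\lambda$ with the stated coefficients; this is a bookkeeping exercise with the anisotropic homogeneity degrees $-k\mu-\ell$ and the dimension $n$, analogous to the classical Grubb–Seeley analysis \cite{GrSe}, which I would cite for the one-variable ($\xi$ only) parameter-dependent expansion and then augment with the $x$-symbol bookkeeping. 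Once Theorem \ref{zero2.2} is in place, Theorem \ref{zero2.1} follows by applying the finite-part integral $\fpint\,\cdot\,dx$ to the diagonal kernel and using its continuity on $S^0_\cl(\rz^n_x)$, which converts the $x$-symbol-valued expansion into the scalar expansion \eqref{zero2.A} with $c_j=\fpint c_j(x)\,dx$, etc.
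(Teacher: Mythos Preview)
Your proposal is correct and follows the same overall strategy as the paper, namely to import the Grubb--Seeley weakly parametric expansion for the kernel on the diagonal and to upgrade it so that the coefficients and remainders are classical symbols in~$x$. The one organizational difference is worth noting: you plan to run the Grubb--Seeley analysis in the $\xi$-variable only and then, as a separate step, verify the $x$-uniformity (which you correctly flag as the main obstacle). The paper instead absorbs this obstacle into the set-up by working from the outset with Grubb--Seeley symbol classes $S^{\nu;d}(\Sigma,E)$ taking values in the Fr\'echet space $E=S^0_\cl(\rz^n_x)$; the parametrix for $\sigma^\mu-a(x,D)$ is built inside $S^{(\nu,0);d}_\cl:=S^{\nu;d}(\Sigma,S^0_\cl(\rz^n_x))$, and the kernel expansion \cite[Theorem~2.1]{GrSe} then automatically delivers coefficients $\alpha_j,\alpha_j',\alpha_j''\in S^0_\cl(\rz^n)$ and remainders bounded in $S^0_\cl(\rz^n_x)$, with no extra bookkeeping. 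What your approach buys is conceptual directness (rescaling $\xi=|\lambda|^{1/\mu}\eta$ makes the origin of the exponents transparent); what the paper's buys is that the ``main obstacle'' simply disappears, at the cost of first checking that the parametrix recursion $p^{(-\mu-i)}$ lands in the $E$-valued classes.

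One small imprecision: your account of the $\log\lambda$ terms as arising from ``resonances'' where the rescaled $\eta$-integral diverges logarithmically is heuristically right but not the full Grubb--Seeley mechanism. In their framework the $\log$-terms come from the \emph{limit symbols} $p_{(d,j)}(\xi)=\lim_{z\to0}\partial_z^j(z^d p(\xi,1/z))$, i.e.\ from the Taylor expansion of the symbol in $1/\sigma$, cf.\ \cite[Proposition~1.21 and Theorem~2.1]{GrSe}; the paper exploits this (Remark following the parametrix construction) to see that the nonzero limit symbols occur only at indices $j=k\mu$, which is what forces the second sum to run over integer powers $\lambda^{-j-k}$ rather than all $\lambda^{(n-\ell)/\mu-k}$.
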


This asymptotic structure can be obtained by adapting the approach of 
Grubb and Seeley \cite{GrSe} to the case of classical SG-symbols. 
For the convenience of the reader we shall outline the proof 
in the remaining part of this section. 

We first introduce a class of weakly polyhomogeneous 
symbols that contains the parametrix of $(\sigma^\mu-a(x,D))^{-k}$ for 
$\sigma^\mu\in\Lambda$. 

\begin{definition}\rm\label{zero2.3}
Let $E$ be a Fr\'echet space, $\Sigma$ an open subsector of $\cz\setminus\{0\}$ 
and $\nu\in\rz$. 
A function $p(\xi,\sigma)$ belongs to the symbol class $S^{\nu;0}(\Sigma,E)$, if 
\begin{itemize}
 \item[a)] $p:\rz^n\times\Sigma\to E$ is smooth and holomorphic in $\sigma\in\Sigma$ on
   $\{(\xi,\sigma)\st |\xi|+|\sigma|>\eps\}$ for some $\eps>0$,
 \item[b)] for each $j\in\nz_0$ 
    $$\partial^j_z p\big(\xi,\mbox{$\frac{1}{z}$}\big)\;\in\;
      S^{\nu+j}(\rz^n_\xi,E),$$
  uniformly for $\frac{1}{z}$ in closed subsectors of $\Sigma$ such that $|z|\le 1$. 
  See Section \ref{sec:zero5.0} for the definition of $E$-valued symbols. 
\end{itemize}
For $d\in\rz$, we let $S^{\nu;d}(\Sigma,E)=\sigma^d S^{\nu;0}(\Sigma,E)$. 
For $E=S^{s}_\cl(\rz^n_x)$ we shall write
 $$S^{(\nu,s);d}_\cl=S^{\nu;d}(\Sigma,S^{s}_\cl(\rz^n_x)),\qquad \nu,s,d\in\rz.$$
\end{definition}

For $E=C^\infty(\rz^n_x)$ we recover the symbol spaces used in \cite{GrSe}. 
Note that for $S^{\nu;d}(\Sigma,E)$ the limits 
\begin{equation}\label{zero2.A.5}
 p_{(d,j)}(\xi)=\lim_{z\to 0}\partial_z^j 
 \Big(z^dp\big(\xi,\mbox{$\frac{1}{z}$}\big)\Big)
\end{equation}
exist in $E$ and define symbols $p_{(d,j)}\in S^{\nu+j}(\rz^n,E)$, 
cf.\ Theorem 1.12 in \cite{GrSe}.

\begin{definition}\rm\label{zero2.4}
$p\in S^{\nu_0-d;d}(\Sigma,E)$ is called weakly polyhomogeneous if there exists a 
sequence of symbols $p_j\in S^{\nu_j-d;d}(\Sigma,E)$ that are positively homogeneous 
in $(\xi,\sigma)$ for $|\xi|\ge1$ of degree $\nu_j\searrow-\infty$, such that  
$p-\sum\limits_{j=0}^{N-1}p_j\;\in\;S^{\nu_N-d;d}(\Sigma,E)$ 
for each $N\in\nz_0$. 
We write $p\sim\sum_j p_j$ in $S^{\infty;d}(\Sigma,E)$. 
\end{definition}

Note also that given any sequence $p_j\in S^{\nu_j-d;d}(\Sigma,E)$ with 
$\nu_j\to-\infty$, there always exists a symbol $p\in S^{\nu_0-d;d}(\Sigma,E)$ 
such that $p\sim\sum_j p_j$ in $S^{\infty;d}(\Sigma,E)$. 

Following the proof of Theorem 2.1 in \cite{GrSe}, one can show that  
if $p\in S^{(\nu_0-d,s);d}_\cl$ is a weakly 
polyhomogeneous symbol as in Definition \ref{zero2.4} with the additional property that 
there exist $\nu^\prime,d^\prime\in\rz$ with $\nu^\prime<-n$ and  
$p_j\in S^{(\nu^\prime,s);d^\prime}_\cl$ whenever $\nu_j-d\ge-n$, then the kernel 
$K(x,y,\sigma)$ of $p(x,D,\sigma)$ satisfies the following: There exist symbols 
$\alpha_k,\alpha_k^\prime,\alpha_k^{\prime\prime}\in S^s_\cl(\rz^n)$ such that, 
for each $N\in\nz_0$,
\begin{equation}\label{zero2.B} 
 \sigma^{N-d}
 \Big\{
  K(x,x,\sigma)-\sum_{j=0}^{N-1}\alpha_j(x)\sigma^{\nu_j+n}
  -\sum_{k=0}^{N-1}\big(\alpha_k^\prime(x)\log\sigma+\alpha_k^{\prime\prime}(x)\big)
  \sigma^{d-k}
 \Big\}\;\in\;S^{(0,s);0}_\cl,
\end{equation}
depending only on the variables $(x,\sigma)$. 
If $p\in S^{(-\infty,s);d}$ then in the analogous sense    
\begin{equation}\label{zero2.B.5} 
 \sigma^{N-d}
 \Big\{K(x,x,\sigma)-\sum_{k=0}^{N-1}\alpha_k^{\prime\prime}(x)\sigma^{d-k}\Big\}
 \;\in\;S^{(0,s);0}_\cl. 
\end{equation}
The $\alpha_k^\prime$ and $\alpha_k^{\prime\prime}$ in \eqref{zero2.B} 
are determined by the limit symbols of $p_{j}$ and $p-\sum_{j<N}$, $N\in\nz_0$, 
in the sense of \eqref{zero2.A.5}, respectively; the $\alpha_k^{\prime\prime}$ in \eqref{zero2.B.5} only depend on the 
limit symbols of $p$. 
For details see Proposition 1.21 and Theorem 2.1 in \cite{GrSe}.  

By Remark \ref{rem:ell-classical} there exists a sector $\Lambda^\prime$ whose 
interior contains $\Lambda\setminus\{0\}$ such that $a$ is $\Lambda^\prime$-elliptic. For 
notational convenience we shall denote by $a^{(\mu-j)}(x,\xi)\in S^{\mu-j,0}_\cl$ 
symbols that coincide with the homogeneous components of the $\Lambda$-elliptic symbol 
$a\in S^{\mu,0}_\cl$ for $|\xi|\ge 1$. Moreover, we assume that $a^{(\mu)}(x,\xi)$ has no 
spectrum in $\Lambda^\prime$ for all $x,\xi\in\rz^n$. 

We let $\Sigma$ be a sector such that $\sigma\mapsto\sigma^\mu$ is a bijection from $\Sigma$ 
to the interior of $\Lambda^\prime$, and define 
 $$p^{(-\mu)}(x,\xi,\sigma)=\big(\sigma^\mu-a^{(\mu)}(x,\xi)\big)^{-1},$$ 
and then recursively 
  $$p^{(-\mu-i)}(x,\xi,\sigma)=\sum_{\substack{j+l+|\alpha|=i\\ j<i}}
    \frac{1}{\alpha!}(\partial_\xi^\alpha p^{(-\mu-j)})(D^\alpha_x a^{(\mu-l)})p^{(-\mu)},
    \qquad i\ge 1.$$
Then one can show that 
 $$p^{(-\mu)}\;\in\; S^{(-\mu,0);0}_\cl\,\cap\,S^{(0,0);-\mu}_\cl$$
and that $p^{(-\mu)}$ is homogeneous of degree $-\mu$ in $(\xi,\sigma)$ for $|\xi|\ge1$. 
Moreover,  
 $$p^{(-\mu-i)}\;\in\; S^{(-\mu-i,0);0}_\cl\,\cap\,S^{(\mu-i,0);-2\mu}_\cl,\qquad i\ge 1,$$
are homogeneous of degree $-\mu-i$ in $(\xi,\sigma)$ for $|\xi|\ge1$. 

\begin{remark}\rm\label{zero2.5}
Given the concrete form of the $p^{(-\mu-i)}$ it is straightforward to see that the limit symbols 
$p^{(-\mu)}_{(-\mu,j)}$ and $p^{(-\mu-i)}_{(-2\mu,j)}$ in the sense of \eqref{zero2.A.5} 
can be different from zero only if $j=k\mu$ for some $k\in\nz_0$.  
\end{remark}

We now define $p\in S^{(0,0),-\mu}_\cl\cap S^{(-\mu,0);0}_\cl$ by 
 $$p(x,\xi,\sigma)- p^{(-\mu)}(x,\xi,\sigma)\sim 
   \sum_{i=1}^\infty p^{(-\mu-i)}(x,\xi,\sigma).$$
Then $p$ is a weakly polyhomogeneous symbol and   
\begin{align*}
 p-p^{(-\mu)}\;\in\; S^{(-\mu-1,0);0}_\cl \,\cap\, S^{(\mu-1,0);-2\mu}_\cl. 
\end{align*}
Furthermore, the asymptotic summation can be performed in such a way that the property 
described in Remark \ref{zero2.5} carries over to $p-p^{(-\mu)}$. 

\begin{proposition}\label{zero2.6} 
$p$ is a parametrix of $\sigma^\mu-a(x,D)$, i.e. 
 $$r(x,\xi,\sigma):=1-(\sigma^\mu-a(x,\xi))\,\#\,p(x,\xi,\sigma)\;\in\;  
   S^{(-\infty,0);-\mu}_\cl.$$
The limit symbols $r_{(-\mu,j)}$, in the sense of \eqref{zero2.A.5}, can be different from 
zero only if $j=k\mu$ for some $k\in\nz_0$. 
\end{proposition}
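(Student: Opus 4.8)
The plan is to show that $p$ is a right parametrix by the usual Neumann-series argument, carried out inside the weakly polyhomogeneous calculus of Definitions \ref{zero2.3} and \ref{zero2.4}, and then to track the limit symbols explicitly using the recursive formulas for the $p^{(-\mu-i)}$. First I would observe that the Leibniz formula for the $\#$-product gives, formally,
\begin{equation*}
 (\sigma^\mu-a)\,\#\,p^{(-\mu)} = (\sigma^\mu-a^{(\mu)})p^{(-\mu)} - \sum_{|\alpha|\ge 1}\frac{1}{\alpha!}(\partial_\xi^\alpha a)(D_x^\alpha p^{(-\mu)}) - (a-a^{(\mu)})p^{(-\mu)} = 1 - s^{(1)},
\end{equation*}
where $s^{(1)}$ collects all terms of $\xi$-order (and $\sigma$-degree) one unit lower. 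The recursive definition of $p^{(-\mu-i)}$ is designed exactly so that $(\sigma^\mu-a)\,\#\,\big(\sum_{i=0}^{M-1}p^{(-\mu-i)}\big) = 1 - s^{(M)}$ with $s^{(M)}$ of order $-M$ in $\xi$ and in the appropriate $\Sigma$-class; this is a bookkeeping computation of the same type as Proposition \ref{zero1.3}, so I would only indicate it and refer to \cite{GrSe}. Passing to the asymptotic sum $p\sim\sum_i p^{(-\mu-i)}$ and using that the calculus is closed under asymptotic summation, the remainder $r = 1-(\sigma^\mu-a)\,\#\,p$ lies in $\bigcap_M S^{(-M,0);-\mu}_\cl = S^{(-\infty,0);-\mu}_\cl$, which is the first assertion.

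The key point requiring care is that $r$ is a genuine smoothing remainder in the sense of the $\Sigma$-calculus — i.e.\ that it is $S^{(-\infty,0);-\mu}$ and not merely $S^{(-\infty,0);d}$ for some larger $d$. This is where the two-sided membership $p^{(-\mu)}\in S^{(-\mu,0);0}_\cl\cap S^{(0,0);-\mu}_\cl$ and $p^{(-\mu-i)}\in S^{(-\mu-i,0);0}_\cl\cap S^{(\mu-i,0);-2\mu}_\cl$ enters: when estimating $\partial_z^j(z^d r(\xi,1/z))$ I would use the $S^{\cdot;-2\mu}$-information on the $p^{(-\mu-i)}$ together with the $S^{\cdot;0}$-information on $\sigma^\mu-a$ to see that each product $\partial_\xi^\alpha a^{(\mu-l)}\cdot D_x^\alpha p^{(-\mu-j)}$ contributes with $\sigma$-degree $\le\mu-2\mu=-\mu$; since $\#$ and asymptotic summation do not worsen the $\Sigma$-degree beyond what the constituents allow, $r$ inherits the degree $-\mu$. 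I expect this $\Sigma$-degree tracking — rather than the $\xi$-order bookkeeping, which is completely standard — to be the main obstacle, because it relies on the precise pair of homogeneity statements for the $p^{(-\mu-i)}$ stated (without proof) just before the Proposition, and one must check they are compatible under composition.

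For the statement about the limit symbols $r_{(-\mu,j)}$, I would argue as follows. By \eqref{zero2.A.5} the limit symbol $r_{(-\mu,j)}$ is computed from $\partial_z^j(z^\mu r(\xi,1/z))|_{z=0}$, and by the $\#$-formula $z^\mu r$ is a finite sum (modulo terms vanishing in the limit) of $\#$-products of the limit symbols of $\sigma^\mu-a$ — which are $\sigma^\mu$ itself (degree $\mu$, so $z^\mu\cdot$ that is the constant $1$) and the homogeneous pieces of $a$ — with the limit symbols of $p$. By Remark \ref{zero2.5} and the last sentence before the Proposition, the limit symbols $p_{(-2\mu,j)}$ of $p-p^{(-\mu)}$ and $p^{(-\mu)}_{(-\mu,j)}$ vanish unless $j\in\mu\nz_0$. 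Since the Leibniz sum defining the $\#$-product distributes $j$ additively among the factors and the $a^{(\mu-l)}$ contribute only via their $x$-derivatives (which do not shift the $\sigma$-exponent), any nonvanishing contribution to $r_{(-\mu,j)}$ forces $j$ to be a sum of elements of $\mu\nz_0$, hence $j=k\mu$ for some $k\in\nz_0$. I would spell out the exponent count carefully since this is the one place where the precise indexing matters, but it is again elementary once the structure of the limit symbols of $p$ is granted.
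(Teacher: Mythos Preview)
The paper does not actually supply a proof of this proposition; it is stated as part of the outline in Section~\ref{sec:zero2} with details deferred to Grubb--Seeley~\cite{GrSe}, so there is no ``paper's proof'' to compare against beyond that reference. Your sketch is the correct Grubb--Seeley argument: the recursion for the $p^{(-\mu-i)}$ is precisely the standard symbolic parametrix construction, the two-sided membership $p^{(-\mu-i)}\in S^{(-\mu-i,0);0}_\cl\cap S^{(\mu-i,0);-2\mu}_\cl$ is what forces the remainder down to $\sigma$-degree $-\mu$, and the Leibniz/convolution reasoning for the limit symbols is exactly how one reads off the $j\in\mu\nz_0$ restriction from Remark~\ref{zero2.5} and the sentence preceding the proposition.

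One small point worth tightening: the recursive formula in the paper places $\partial_\xi^\alpha$ on $p^{(-\mu-j)}$ and $D_x^\alpha$ on $a^{(\mu-l)}$, which is the Leibniz pattern for the \emph{left} relation $p\,\#\,(\sigma^\mu-a)=1$, whereas the proposition is stated for the \emph{right} remainder $r=1-(\sigma^\mu-a)\,\#\,p$. Your first displayed line uses the opposite placement. This is harmless---a left parametrix in this calculus is automatically a right parametrix modulo $S^{(-\infty,0);-\mu}_\cl$, and the limit-symbol statement is unaffected---but you should either note this explicitly or run the recursion on the side matching the proposition so that the cancellation you invoke is literal rather than up to a further smoothing correction.
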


{From} Proposition \ref{zero2.6} one derives that $(\sigma^\mu-a(x,D))^{-1}$ exists for all 
$\sigma\in\Sigma$ with sufficiently large absolute value, and 
 $$(\sigma^\mu-a(x,D))^{-1}=p(x,D,\sigma)+
   \sum_{j=1}^\infty p(x,D,\sigma)\,r(x,D,\sigma)^j. $$
For the $k$-th power of the resolvent we then obtain 
 $$(\sigma^\mu-a(x,D))^{-k}=p(x,D,\sigma)^k
   +\sum_{l=1}^\infty p(x,D,\sigma)\,r_l(x,D,\sigma),$$
with $p\,\#\,r_l\in S^{(-\infty,0);-(k+l)\mu}_\cl$. Moreover, 
 $$p(x,D,\sigma)^k=p^{(-\mu)}(x,D,\sigma)^k+p^\prime(x,D,\sigma)$$ 
with 
 $$p^\prime\sim\sum_{j=1}^\infty p_j^\prime,\qquad 
   p_j^\prime\in S^{(-k-j,0);0}_\cl\,\cap\,S^{(\mu-j,0);-(k+1)\mu}_\cl,$$
and the $p_j^\prime$ being homogeneous of degree $-k\mu-j$ in $(\xi,\sigma)$ for 
$|\xi|\ge1$. 

Applying now \eqref{zero2.B} or \eqref{zero2.B.5} to each of the terms, keeping in mind 
the vanishing of limit symbols described before, and inserting $\sigma=\lambda^{1/\mu}$ 
one obtains Theorem \ref{zero2.2}; for more details see the original proofs 
in \cite{GrSe}.

\section{Relation to Relative Determinants}\label{sec:zero3}

\subsection{Asymptotics of the Heat Trace}

Throughout this subsection let $\Lambda=\Lambda(\theta)$ for some $\theta<\frac{\pi}{2}$ 
and $A=a(x,D)$ with a symbol $a\in S^{\mu,0}_\cl$, $\mu\in\nz$, that is parameter-elliptic 
with respect to $\Lambda$. Moreover, we assume that the resolvent of $A$ exists in the whole 
sector $\Lambda$. 

\begin{theorem}\label{zero3.1}
The integral 
\begin{equation}\label{zero3.A}
 e^{-tA}=\frac{1}{2\pi i}\int_{\partial\Lambda}e^{-t\lambda}(\lambda-A)^{-1}\,d\lambda,
 \qquad t>0,
\end{equation}
defines a pseudodifferential operator with symbol belonging to 
$S^{-\infty,0}_\cl$ for each $t>0$. 
\end{theorem}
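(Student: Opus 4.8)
The plan is to show first that the Cauchy integral \eqref{zero3.A} converges in an appropriate operator topology, and then to extract a symbol for $e^{-tA}$ and verify that it lies in $S^{-\infty,0}_\cl$. The starting point is Theorem \ref{zero2.1} and the preparatory material of Section \ref{sec:zero2}: for $k\mu>n$ the operator $(\lambda-A)^{-k}$ has symbol in $S^{-k\mu,0}_{\cl(x)}$ with good control as $|\lambda|\to\infty$ in $\Lambda$, and more precisely the parametrix construction via $p(x,\xi,\sigma)$ gives, after substituting $\sigma=\lambda^{1/\mu}$, a symbol of $(\lambda-A)^{-1}$ that decays like $\langle\xi\rangle^{-\mu}$ times a negative power of $|\lambda|$ on $\partial\Lambda$, uniformly in $x$. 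The exponential factor $e^{-t\lambda}$ on $\partial\Lambda$ decays like $e^{-ct|\lambda|}$ because $\theta<\frac{\pi}{2}$, i.e. the contour $\partial\Lambda$ lies in the open left half-plane away from the imaginary axis; this is exactly where the hypothesis on $\theta$ is used.

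First I would make the contour integral literally absolutely convergent. Write $\partial\Lambda$ as the union of the two rays $re^{\pm i\theta}$, $r\ge0$ (the resolvent being holomorphic in a neighborhood of $0$ by the hypothesis that it exists on all of $\Lambda$, one can instead take a slightly indented contour, but since $0\in\Lambda$ and $A$ has no spectrum there, the integrand is smooth up to $r=0$). On each ray, $|e^{-t\lambda}|=e^{-tr\cos\theta}$ with $\cos\theta>0$, while the symbol $b(x,\xi,\lambda)$ of $(\lambda-A)^{-1}$ and all its $x$- and $\xi$-derivatives are bounded, for $|\lambda|\ge R_0$, by $C_{\alpha\beta}\,|\lambda|^{-1+(|\alpha|+\dots)/\mu}\langle\xi\rangle^{-\mu+\dots}\langle x\rangle^{0}$ — the precise estimate coming from the fact that $p$ and the remainder series lie in $S^{(-\mu,0);0}_\cl\cap S^{(0,0);-\mu}_\cl$ and hence $b(\cdot,\cdot,\lambda)$ is $O(|\lambda|^{N})\cdot$(a fixed $SG$-seminorm) only polynomially in $|\lambda|$. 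Multiplying by $e^{-tr\cos\theta}$ kills every polynomial power of $r$, so $\int_{\partial\Lambda}e^{-t\lambda}\,b(x,\xi,\lambda)\,d\lambda$ converges absolutely in every $S^{\mu',0}$-seminorm, for every $\mu'\in\rz$; differentiating under the integral sign (justified by the same domination) shows the resulting function of $(x,\xi)$, call it $a_t(x,\xi)$, lies in $\bigcap_{\mu'}S^{\mu',0}=S^{-\infty,0}$. One also checks it is classical: each homogeneous component $a_t^{(\mu'-k)}$ (in $\xi$) and $(a_t)_{(-j)}$ (in $x$) arises by integrating the corresponding homogeneous component of $b$ along $\partial\Lambda$, and the same domination gives the asymptotic expansions, so $a_t\in S^{-\infty,0}_\cl$.

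It then remains to identify $a_t(x,D)$ with the operator defined by \eqref{zero3.A}, i.e. to interchange the contour integral with the oscillatory integral defining $a_t(x,D)$. This is routine once absolute convergence is in hand: for $u\in\mathcal S(\rz^n)$ one has $\big(\frac{1}{2\pi i}\int_{\partial\Lambda}e^{-t\lambda}(\lambda-A)^{-1}u\,d\lambda\big)(x)=\frac{1}{2\pi i}\int_{\partial\Lambda}e^{-t\lambda}\big(b(x,D,\lambda)u\big)(x)\,d\lambda$, and Fubini (the joint integrand in $(\xi,\lambda)$ being dominated by an integrable function thanks to the Schwartz decay of $\hat u$ and the $e^{-tr\cos\theta}$ factor) lets one pull the $\lambda$-integral inside, producing $a_t(x,D)u$.

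The main obstacle is the first step: obtaining resolvent symbol estimates on the contour that are uniform in $x$ and have at most polynomial growth in $|\lambda|$ with the correct $\langle\xi\rangle$-decay, including behaviour near $\lambda=0$ and the passage from the parametrix to the true resolvent $(\lambda-A)^{-1}=p+\sum_j p\,r^j$. This is precisely the content of the construction summarized around Proposition \ref{zero2.6} — one uses that $p\in S^{(-\mu,0);0}_\cl\cap S^{(0,0);-\mu}_\cl$ and that the Neumann series converges because $r\in S^{(-\infty,0);-\mu}_\cl$ has arbitrarily negative order in $\xi$ and negative order in $\sigma$ — together with the hypothesis that the resolvent genuinely exists on all of $\Lambda$ (not merely for large $|\lambda|$), which removes any issue near the vertex. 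Everything afterwards is bookkeeping with $SG$-seminorms.
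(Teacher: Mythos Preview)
Your argument has a real gap at the step where you conclude $a_t\in S^{-\infty,0}$. The estimate you write down,
\[
|\partial_\xi^\alpha\partial_x^\beta b(x,\xi,\lambda)|\le C_{\alpha\beta}\,|\lambda|^{-1+(\cdots)/\mu}\,\langle\xi\rangle^{-\mu-|\alpha|+\cdots}\,\langle x\rangle^{-|\beta|},
\]
is a \emph{fixed} $\xi$-order bound with at most polynomial growth in $|\lambda|$. Integrating against $e^{-t\lambda}$ along $\partial\Lambda$ indeed absorbs any polynomial in $|\lambda|$, but what survives is still only $\langle\xi\rangle^{-\mu-|\alpha|+\cdots}$; you have shown $a_t\in S^{-\mu,0}$, not $a_t\in\bigcap_{\mu'}S^{\mu',0}$. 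The exponential decay of $e^{-t\lambda}$ acts in the $\lambda$-direction and does nothing, by itself, to improve the $\xi$-order of the outcome. (Incidentally, the rays of $\partial\Lambda$ lie in the \emph{right} half-plane when $\theta<\pi/2$; your formula $|e^{-t\lambda}|=e^{-tr\cos\theta}$ is correct, but the phrase ``left half-plane'' is not.)

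What is missing is a mechanism that converts $\lambda$-regularity into additional $\xi$-decay. The paper supplies this as follows. Rather than the Grubb--Seeley parametrix of Section~\ref{sec:zero2}, it uses the parametrix $b\sim\sum_j b_j$ of Theorem~\ref{zero5.3}, whose terms are explicit finite products of derivatives of $\wt a$ and of $(\lambda-\wt a)^{-1}$; this yields the \emph{joint} estimate
\[
\big|\partial_\xi^\alpha\partial_x^\beta\partial_\lambda^\gamma b_j(x,\xi,\lambda)\big|
\le C\,(|\lambda|+[\xi]^\mu)^{-1-|\gamma|}\,[x]^{-|\beta|}\,[\xi]^{-j-|\alpha|}.
\]
Now integrate by parts in $\lambda$: $\int_{\partial\Lambda}e^{-t\lambda}b_j\,d\lambda=t^{-|\gamma|}\int_{\partial\Lambda}e^{-t\lambda}\partial_\lambda^{\gamma}b_j\,d\lambda$, and bound $(|\lambda|+[\xi]^\mu)^{-1-|\gamma|}\le[\xi]^{-\mu(1+|\gamma|)}$. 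Since $|\gamma|$ is arbitrary this gives $\int_{\partial\Lambda}e^{-t\lambda}b_j\,d\lambda\in S^{-\infty,0}_\cl$ for each $j$; combined with $b-\sum_{j<N}b_j\in S^{-\mu-N,-N}$ uniformly in $\lambda$, one obtains $e^{-tA}\in S^{-\infty,0}_\cl$. Your outline can be repaired along the same lines, but you must invoke a joint $(\lambda,\xi)$-estimate of this type and the integration-by-parts step; the classes $S^{(-\mu,0);0}_\cl\cap S^{(0,0);-\mu}_\cl$ alone do not encode it.
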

\begin{proof}
Let $b(x,D,\lambda)$ be the parametrix described in Theorem \ref{zero5.3} of the Appendix. 
Since 
 $$[\lambda]^{-2}\big\{(\lambda-A)^{-1}-b(x,D,\lambda)\big\}\in S^{-\infty,-\infty}$$
uniformly in $\lambda\in\Lambda$, we can replace in \eqref{zero3.A} the resolvent by 
$b(x,D,\lambda)$. Since there are symbols $b_j(x,\xi,\lambda)\in S^{-\mu-j,-j}$ such that 
 $$b(x,\xi,\lambda)-\sum_{j=0}^{N-1}b_j(x,\xi,\lambda)\;\in\; S^{-\mu-N,-N}$$
uniformly in $\lambda\in\Lambda$, we obtain
 $$e^{-tA}-\frac{1}{2\pi i}\sum_{j=0}^{N-1}
   \int_{\partial\Lambda}e^{-t\lambda}b_j(x,D,\lambda)\,d\lambda\;\in\;S^{-\mu-N,-N}.$$
It remains to show that 
  $$\int_{\partial\Lambda}e^{-t\lambda}b_j(x,D,\lambda)\,d\lambda\;\in\,S^{-\infty,0}_\cl,
    \qquad j\in\nz_0.$$ 
However, using the fact -- cf.\ \cite{MSS} for details -- that $b_j$ is a finite linear combination 
of finite products of derivatives of the symbol $\wt{a}(x,\xi)$ (see \eqref{zero5.A} for the 
definition of $\wt{a}$) and of factors $(\lambda-\wt{a}(x,\xi))^{-1}$, one can show the validity 
of the uniform estimates 
 $$|\partial_\xi^\alpha\partial_x^\beta\partial_\lambda^\gamma b_j(x,\xi,\lambda)|
     \le C (|\lambda|+[\xi]^\mu)^{-1-|\gamma|}[x]^{-|\beta|}[\xi]^{-j-|\alpha|}$$
for any order of derivatives. Then the claim follows from integration by parts. 
\end{proof}

By the previous theorem we can apply the regularized trace to the heat operator. 
It will be important to know that one can take the trace under the integral in \eqref{zero3.A}. 
To achieve this we need the following result:

\begin{proposition}\label{zero3.2}
For $k\in\nz$ let $a_k(x,\xi,\lambda)$ be the symbol of $(\lambda-A)^{-k}$, the $k$-th 
power of the resolvent of $A$. Then the following functions are bounded: 
 $$\lambda\mapsto[\lambda]^ka_k(x,\xi,\lambda):\Lambda\to S^{0,0}_{\cl(x)},\qquad 
     \lambda\mapsto a_k(x,\xi,\lambda):\Lambda\to S^{-\mu k,0}_{\cl(x)}.$$ 
\end{proposition}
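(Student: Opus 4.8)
\textbf{Plan for the proof of Proposition \ref{zero3.2}.}
The plan is to prove both boundedness statements simultaneously by induction on $k$, exploiting the resolvent identity and the algebraic structure of the SG-calculus. For $k=1$, the statement is essentially the content of the parametrix construction recalled before Theorem \ref{zero3.1}: the parametrix $b(x,D,\lambda)$ has symbol satisfying the uniform estimates
$$|\partial_\xi^\alpha\partial_x^\beta\partial_\lambda^\gamma b(x,\xi,\lambda)|
  \le C(|\lambda|+[\xi]^\mu)^{-1-|\gamma|}[x]^{-|\beta|}[\xi]^{-|\alpha|},$$
and since $[\lambda]^{-2}\{(\lambda-A)^{-1}-b(x,D,\lambda)\}\in S^{-\infty,-\infty}$ uniformly in $\lambda\in\Lambda$, the symbol $a_1$ inherits these bounds. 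Reading them off: $\lambda\mapsto[\lambda]a_1(\cdot,\cdot,\lambda)$ is bounded in $S^{0,0}$ (hence in $S^{0,0}_{\cl(x)}$, using that the parametrix and remainder are classical in $x$), and $\lambda\mapsto a_1(\cdot,\cdot,\lambda)$ is bounded in $S^{-\mu,0}$, because $(|\lambda|+[\xi]^\mu)^{-1}\le[\xi]^{-\mu}$ for $\lambda\in\Lambda$. This handles the base case.

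For the inductive step I would use $(\lambda-A)^{-k-1}=(\lambda-A)^{-1}(\lambda-A)^{-k}$, so that $a_{k+1}=a_1\#a_k$, and then invoke continuity of the Leibniz product (the $\#$-composition) on the relevant SG-symbol spaces. Concretely, $S^{-\mu,0}_{\cl(x)}\#S^{-\mu k,0}_{\cl(x)}\subset S^{-\mu(k+1),0}_{\cl(x)}$ continuously, which gives boundedness of $\lambda\mapsto a_{k+1}(\cdot,\cdot,\lambda)$ in $S^{-\mu(k+1),0}_{\cl(x)}$. For the weighted statement, write $[\lambda]^{k+1}a_{k+1}=\big([\lambda]a_1\big)\#\big([\lambda]^k a_k\big)$ and use that $[\lambda]$ is a scalar; by the inductive hypothesis the two factors are bounded families in $S^{0,0}_{\cl(x)}$, and $S^{0,0}_{\cl(x)}\#S^{0,0}_{\cl(x)}\subset S^{0,0}_{\cl(x)}$ continuously, so $\lambda\mapsto[\lambda]^{k+1}a_{k+1}(\cdot,\cdot,\lambda)$ is bounded in $S^{0,0}_{\cl(x)}$ as claimed.

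The main obstacle is making precise that the composition $\#$ is continuous \emph{as a map between spaces of operator-valued (i.e.\ $S^{0,0}_{\cl(x)}$-valued) symbols depending on the parameter $\lambda$}, uniformly in $\lambda\in\Lambda$ — in particular that the classical-in-$x$ structure is genuinely preserved under $\#$ and that one does not merely get membership in the larger, non-classical class $S^{\bullet,0}$. This requires checking that the oscillatory-integral formula for $a_1\#a_k$ maps the $x$-homogeneous expansions to $x$-homogeneous expansions, which follows since each homogeneous component of the product is built bilinearly from the homogeneous components of the factors (cf.\ the discussion of the mixed components $a^{(\mu-k)}_{(m-j)}$ in the Appendix), and that all the resulting seminorm estimates are controlled uniformly by finitely many seminorms of $a_1$ and $a_k$ that are themselves $\lambda$-uniformly bounded by the base case. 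Once this uniform continuity of $\#$ on the parametrized symbol classes is in hand, the induction closes without further difficulty. One should also note that, alternatively, one could avoid the induction entirely by differentiating the first-power resolvent, using $(\lambda-A)^{-k}=\frac{(-1)^{k-1}}{(k-1)!}\partial_\lambda^{k-1}(\lambda-A)^{-1}$ together with the $\partial_\lambda^\gamma$-estimates on $b$ recorded above; this gives the weighted bound immediately since each $\partial_\lambda$ improves decay by one factor of $(|\lambda|+[\xi]^\mu)^{-1}$, and then $a_k=[\lambda]^{-k}\cdot([\lambda]^k a_k)$ gives the unweighted bound in $S^{-\mu k,0}_{\cl(x)}$.
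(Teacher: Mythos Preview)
Your reduction to $k=1$ is the same as the paper's (it simply says ``Clearly we may assume $k=1$''), and either your induction via the continuity of $\#$ or the differentiation $(\lambda-A)^{-k}=\frac{(-1)^{k-1}}{(k-1)!}\partial_\lambda^{k-1}(\lambda-A)^{-1}$ is an acceptable way to make that step precise. (One small slip: in your differentiation variant, the line ``$a_k=[\lambda]^{-k}\cdot([\lambda]^k a_k)$ gives the unweighted bound in $S^{-\mu k,0}_{\cl(x)}$'' is wrong as written --- multiplying by the scalar $[\lambda]^{-k}$ does not lower the $\xi$-order; you have to read off $(|\lambda|+[\xi]^\mu)^{-k}\le[\xi]^{-\mu k}$ directly from the estimate, as the paper does.)

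The genuine gap is in your base case, and it concerns precisely the point of the proposition: the $\cl(x)$ topology. The estimates you quote, $|\partial_\xi^\alpha\partial_x^\beta\partial_\lambda^\gamma b(x,\xi,\lambda)|\le C(|\lambda|+[\xi]^\mu)^{-1-|\gamma|}[x]^{-|\beta|}[\xi]^{-|\alpha|}$, give only uniform boundedness in the \emph{non-classical} space $S^{0,0}$. Your parenthetical ``hence in $S^{0,0}_{\cl(x)}$, using that the parametrix and remainder are classical in $x$'' is a non sequitur: a family can be bounded in $S^{0,0}$ and have each member in $S^{0,0}_{\cl(x)}$ without being bounded in the strictly finer $S^{0,0}_{\cl(x)}$ topology, which additionally requires uniform control of all the $x$-homogeneous-component remainders. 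The paper handles exactly this by reducing to $(\lambda-\wt a(x,\xi))^{-1}$ and invoking Proposition~\ref{zero5.2.5}: since $\wt a$ is classical of order $0$ in $x$, the function $\wt a(\phi(y),\xi)$ extends smoothly to the closed ball $\overline U$ in $y$, so by the chain rule $(\lambda-\wt a(\phi(y),\xi))^{-1}$ satisfies $|\partial_y^\beta\partial_\xi^\alpha(\lambda-\wt a(\phi(y),\xi))^{-1}|\le C_{\alpha\beta}(|\lambda|+[\xi]^\mu)^{-1}[\xi]^{-|\alpha|}$ uniformly on $\overline U$, which is precisely uniform boundedness in $S^0_\cl(\rz^n_x,S^{-\mu}(\rz^n_\xi))=S^{-\mu,0}_{\cl(x)}$ (and similarly for $[\lambda](\lambda-\wt a)^{-1}$ in $S^{0,0}_{\cl(x)}$). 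This compactification trick is the missing idea in your argument; once you insert it for $k=1$, the rest of your scheme goes through.
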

\begin{proof}
Clearly we may assume $k=1$. If $b(x,\xi,\lambda)$ is as in Theorem \ref{zero5.3}, 
it is enough to consider $b(x,D,\lambda)$ instead of $(\lambda-A)^{-1}$. By the structure 
of $b(x,\xi,\lambda)$ (cf.\ the beginning of the proof of Theorem \ref{zero3.1}), the 
proof reduces to showing the corresponding properties for $(\lambda-\wt{a}(x,\xi))^{-1}$. 
To this end observe that if $\phi$ is the diffeomorphism of Proposition \ref{zero5.2.5}, 
then
\begin{equation}\label{zero3.B}
 |\partial_y^\beta\partial_\xi^\alpha\partial^j_\lambda
 \big(\lambda-\wt{a}(\phi(y),\xi)\big)^{-1}|
 \le C_{\alpha\beta k}(|\lambda|+[\xi]^\mu)^{-1-j}[\xi]^{-|\alpha|}.
\end{equation}
In fact, this is true by chain rule, since 
 $$|\partial_y^\beta\partial_\xi^\alpha \wt{a}(\phi(y),\xi)|\le 
   C_{\alpha\beta}[\xi]^{\mu-|\alpha|}$$
for $\wt{a}(x,\xi)$ is a classical symbol in $x$, and 
 $$|\big(\lambda-\wt{a}(\phi(y),\xi)\big)^{-1}|
   \le C(|\lambda|+[\xi]^\mu)^{-1}$$
by the parameter-ellipticity of $\wt{a}$. Clearly \eqref{zero3.B} 
implies boundedness with respect to 
$\lambda\in\Lambda$ of 
 $$[\lambda](\lambda-\wt{a}(x,\xi))^{-1}\;\in\;S^{0,0}_{\cl(x)},\qquad 
   (\lambda-\wt{a}(x,\xi))^{-1}\;\in\;S^{-\mu,0}_{\cl(x)}.$$
\end{proof}

Using integration by parts and choosing $k>1+\frac{n+1}{\mu}$, we have 
\begin{equation}\label{zero3.C}
 \TR e^{-tA}=t^{-k}\frac{(-1)^{k}k!}{2\pi i}\int_{\partial\Lambda}e^{-t\lambda}\,
 \TR (\lambda-A)^{-k-1}\,d\lambda;
\end{equation}
in fact, by Proposition \ref{zero3.2}, we know that the symbol of 
$[\lambda]^{2}(\lambda-A)^{-k}$ belongs to the space $S^{-n-1,0}_{\cl(x)}$,
uniformly in $\lambda$. Hence, the continuity of the trace functional allows to pull $\TR$ 
under the integral. Due to the holomorphicity of the integrand, we can 
modify the path of integration, $\partial\Lambda$, to a parametrization of the boundary 
of $\Lambda\cup\{|\lambda|\le\eps\}$ for sufficiently small $\eps>0$. 
We shall use this fact frequently without indicating it explicitly. 

{From} \eqref{zero3.C} it follows that $\TR e^{-tA}$ is holomorphic in a sector of the 
form $\{t=re^{i\varphi}\st r>0,\,|\varphi|<\theta_0\}$ for $0<\theta_0\le \pi/2-\theta$, 
decays there exponentially for $|t|\to\infty$, and when $|t|\to0$ it is $O(|t|^{-k})$ 
for any integer $k>1+\frac{n+1}{\mu}$. 

We shall now show that $\TR e^{-tA}$ has an asymptotic expansion for $t\to0$ which 
is related to the pole structure of the regularized zeta function $\zeta_A$: If $\Gamma$ 
denotes the standard Gamma function then, according to Theorem \ref{zero1.7}
\begin{equation}\label{zero3.D}
 \Gamma(z)\zeta_A(z)\sim\sum_{k=0}^\infty\sum_{l=0}^1 
 c_{kl}\Big(z-\frac{n-k}{\mu}\Big)^{-(l+1)},
\end{equation}
i.e.\ $\Gamma(z)\zeta_A(z)$ is a meromorphic function on $\cz$ with poles in 
$\frac{n-k}{\mu}$, $k\in\nz_0$. Furthermore, $c_{k1}=0$ when $k=n$ or 
$k-n\notin\mu\nz$. 

\begin{theorem}\label{zero3.3}
The generalized trace of $e^{-tA}$ has the following asymptotic expansion: 
 $$\TR e^{-tA}\sim 
   \sum_{k=0}^\infty\sum_{l=0}^1 (-1)^lc_{kl}\,t^{-\frac{n-k}{\mu}}\,\log^l t,\qquad 
   t\longrightarrow 0+,$$
where the coefficients $c_{kl}$ are those from \eqref{zero3.D}. 
\end{theorem}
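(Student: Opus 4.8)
The plan is to derive the short-time asymptotics of $\TR e^{-tA}$ from the Mellin transform relation connecting the heat trace to the zeta function, essentially inverting the passage \eqref{zero3.D}. First I would recall that for $\re z$ large one has the identity
\begin{equation*}
 \Gamma(z)\zeta_A(z)=\int_0^\infty t^{z-1}\TR e^{-tA}\,dt,
\end{equation*}
which follows by writing $A^{-z}=\frac{1}{\Gamma(z)}\int_0^\infty t^{z-1}e^{-tA}\,dt$ as an operator identity (valid since the spectrum of $A$ lies off $\Lambda$, so $e^{-tA}$ decays exponentially as $t\to\infty$), applying $\TR$ to both sides, and justifying the interchange of $\TR$ with the $t$-integral via the uniform bounds of Proposition \ref{zero3.2} together with the continuity of the trace functional on $S^{\mu,0}_\cl$; the integral converges at $t=0$ because $\TR e^{-tA}=O(|t|^{-k})$ for $k>1+\frac{n+1}{\mu}$, hence for $\re z>k$.

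Next I would split the Mellin integral at $t=1$. The piece $\int_1^\infty t^{z-1}\TR e^{-tA}\,dt$ is entire in $z$ because of the exponential decay of $\TR e^{-tA}$ as $t\to\infty$. So all the poles of $\Gamma(z)\zeta_A(z)$ come from $\int_0^1 t^{z-1}\TR e^{-tA}\,dt$, and by the standard Mellin-transform dictionary the pole structure of this integral is exactly the asymptotic expansion of $\TR e^{-tA}$ as $t\to 0+$: a term $a_{kl}\,t^{-\frac{n-k}{\mu}}\log^l t$ in the expansion corresponds to a pole of order $l+1$ at $z=\frac{n-k}{\mu}$ with a prescribed principal part. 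Concretely, $\int_0^1 t^{z-1}t^{-s}\,dt=\frac{1}{z-s}$ and $\int_0^1 t^{z-1}t^{-s}\log t\,dt=-\frac{1}{(z-s)^2}$, so matching the principal part $\sum_l c_{kl}(z-\frac{n-k}{\mu})^{-(l+1)}$ from \eqref{zero3.D} against these forces the coefficient of $t^{-\frac{n-k}{\mu}}\log^l t$ to be $(-1)^l c_{kl}$, which is precisely the claimed expansion. To make this rigorous rather than formal one must know \emph{a priori} that $\TR e^{-tA}$ admits \emph{some} expansion in powers $t^{-\frac{n-k}{\mu}}$ and $t^{-\frac{n-k}{\mu}}\log t$; this is supplied by Theorem \ref{zero2.1}: inserting the expansion \eqref{zero2.A} of $\TR(\lambda-A)^{-k-1}$ into the contour-integral formula \eqref{zero3.C} and evaluating the resulting elementary contour integrals $\frac{1}{2\pi i}\int_{\partial\Lambda}e^{-t\lambda}\lambda^{\alpha}\log^l\lambda\,d\lambda$ (which produce terms $t^{-\alpha-1}$ and $t^{-\alpha-1}\log t$ up to explicit constants) yields an asymptotic expansion of $\TR e^{-tA}$ of exactly the stated shape, with an estimable remainder coming from the $O(|\lambda|^{-N/\mu})$ remainder in \eqref{zero2.A}. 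Once both the existence of the expansion and its Mellin transform are in hand, the coefficients are pinned down uniquely by \eqref{zero3.D}.

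I would organize the write-up in two steps: (1) establish the Mellin identity $\Gamma(z)\zeta_A(z)=\int_0^\infty t^{z-1}\TR e^{-tA}\,dt$ for $\re z$ large, with careful justification of pulling $\TR$ inside and of convergence; (2) combine the \emph{a priori} expansion from Theorem \ref{zero2.1}/\eqref{zero3.C} with the elementary Mellin-transform computation above to identify the coefficients with the $c_{kl}$ of \eqref{zero3.D}. The main obstacle is step (1): one needs the operator identity $A^{-z}=\frac{1}{\Gamma(z)}\int_0^\infty t^{z-1}e^{-tA}\,dt$ to descend to the level of \emph{symbols} modulo regularizing terms and then to survive application of the finite-part-integral trace $\TR$, which is only a partially-regularized integral and does not automatically commute with integrals against $t^{z-1}$ near $t=0$. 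The fix is to use \eqref{zero3.C} to rewrite everything in terms of high powers of the resolvent, whose symbols lie in $S^{-\mu k,0}_{\cl(x)}$ uniformly in $\lambda$ by Proposition \ref{zero3.2}, so that on that level $\TR$ is genuinely given by an absolutely convergent integral over a symbol of sufficiently negative $\xi$-order and the interchanges are legitimate; alternatively one can simply read off the coefficients directly from \eqref{zero3.C} and \eqref{zero2.A} without invoking the Mellin identity at all, using the latter only as a consistency check that the coefficients coincide with those appearing in the pole expansion \eqref{zero3.D} of $\Gamma(z)\zeta_A(z)$.
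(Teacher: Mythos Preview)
Your approach is correct and takes a genuinely different route from the paper's. Both arguments identify $\Gamma(z)\zeta_A(z)$ as the Mellin transform of $\TR e^{-tA}$, going through the resolvent representation \eqref{zero3.C} and Proposition~\ref{zero3.2} to justify the interchanges, much as you outline. The difference is in how the small-$t$ asymptotics are extracted. The paper works in the \emph{inverse} direction: knowing the pole structure \eqref{zero3.D}, it invokes a general result (Proposition~5.1.2$^\circ$ of \cite{GrSe2}) converting pole data of a Mellin transform into an asymptotic expansion, which requires exponential decay of $\Gamma(z)\zeta_A(z)$ on vertical strips; establishing that decay estimate \eqref{zero3.E} is the main technical work in the paper's proof and uses a further result from \cite{GrSe2} together with Stirling-type bounds on $\Gamma$. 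Your route bypasses the decay estimate entirely: Theorem~\ref{zero2.1} plus the elementary Hankel integrals $\frac{1}{2\pi i}\int_{\partial\Lambda}e^{-t\lambda}\lambda^\alpha\log^l\lambda\,d\lambda$ give the expansion with \emph{some} coefficients, and then the forward Mellin computation $\int_0^1 t^{z-1}t^{-s}\log^l t\,dt$ pins them to the $c_{kl}$. This is more self-contained and avoids the external black boxes, at the cost of leaning squarely on Theorem~\ref{zero2.1} (itself a nontrivial adaptation of the Grubb--Seeley weakly-parametric calculus). One caution: your closing ``alternative'' of reading the coefficients directly from \eqref{zero3.C} and \eqref{zero2.A} without the Mellin identity does not actually identify them with the $c_{kl}$ of \eqref{zero3.D}, since the constants in Theorem~\ref{zero2.1} are not computed there; the Mellin step is essential, not merely a consistency check.
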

\begin{proof}
By Proposition 5.1.3${}^\circ$ in \cite{GrSe2} applied to $r(\lambda)=\TR(\lambda-A)^{-k-1}$, 
we obtain that $t^k\frac{(-1)^k}{k!}\TR e^{-tA}$ is the inverse Mellin transform of 
 $$\Gamma(z)\frac{1}{2\pi i}\int_{\partial\Lambda}\lambda^{-z}\TR(\lambda-A)^{-k-1}\,d\lambda$$
(this function initially is defined as a holomorphic function in $\re z>0$ and then extends 
meromorphically to $\cz$). Standard properties of the Mellin transform show that 
$\TR e^{-tA}$ is the inverse Mellin transform of 
 $$\Gamma(z-k)\frac{(-1)^k k!}{2\pi i}\int_{\partial\Lambda}
   \lambda^{k-z}\TR(\lambda-A)^{-k-1}\,d\lambda.$$ 
Using the identity $\Gamma(z)=(z-1)\cdot\ldots\cdot(z-k)\Gamma(z-k)$, pulling the trace 
in front of the integral (which is justified by Proposition \ref{zero3.2}), and using 
integration by parts, we see that $\TR e^{-tA}$ is the inverse Mellin transform of 
$$\TR\Big(\Gamma(z)\frac{1}{2\pi i}
  \int_{\partial\Lambda}\lambda^{-z}(\lambda-A)^{-1}\,d\lambda\Big)=
  \TR(\Gamma(z)A^{-z})=\Gamma(z)\zeta_A(z).$$
{From} Proposition 5.1.2${}^\circ$  in \cite{GrSe2} we can derive the desired asymptotic 
behavior of $\TR e^{-tA}$ as $t\to0$ in the sector 
$\{t=re^{i\varphi}\st r\ge0,\,|\varphi|<\theta_0\}$ from the known pole structure of its 
Mellin transform $\Gamma(z)\zeta_A(z)$, provided the following decay property holds: 
For each real $a>0$ and $\delta<\theta_0$,
\begin{equation}\label{zero3.E}
 |\Gamma(z)\zeta_A(z)|\le C_{a,\delta}e^{-\delta|\im z|},\qquad 
 |\im z|\ge1,\quad |\re z|\le a.
\end{equation}
To this end we write 
 $$\zeta(z-k):=\frac{(-1)^k}{k!}(z-1)\cdot\ldots\cdot(z-k)\zeta_A(z)=
   \frac{1}{2\pi i}\int_{\partial\Lambda}\lambda^{k-z}\TR(\lambda-A)^{-k-1}\,d\lambda.$$
Due to Theorem \ref{zero3.1} we can apply Proposition 2.9 of \cite{GrSe2} with  
$f(\lambda)=\TR(\lambda-A)^{-k-1}$ to obtain that 
 $$\Big|\frac{\pi\zeta(z-k)}{\sin\pi(z-k)}\Big|\le C\,e^{-\delta|\im z|}$$
for any $\delta<\pi-\theta$, uniformly for $|\im z|\ge 1$ and $|\re z|\le a$.
Since $\displaystyle \frac{\pi}{s\sin\pi s}=-\Gamma(-s)\Gamma(s)$, 
$\Gamma(z)=(z-1)\cdot\ldots\cdot(z-k)\Gamma(z-k)$, and 
 $$|\Gamma(s)|\ge c_a\,|\im s|^{\re s-\frac{1}{2}}e^{-\frac{\pi}{2}|\im s|},$$
cf. \cite[Chapter VII, Exercise \S2.3]{Bourbaki}, we arrive at 
 $$|\Gamma(z)\zeta_A(z)|\le C\,|\im z|^{a-k-\frac{1}{2}}e^{-(\delta-\frac{\pi}{2})|\im z|}$$
for any $\delta<\pi-\theta$, uniformly for $|\im z|\ge 1$ and $|\re z|\le a$. 
This clearly implies \eqref{zero3.E} for $\theta_0=\frac{\pi}{2}-\theta$.
\end{proof}

\subsection{Relative Zeta Functions and Determinants}

Let us now consider two positive self-adjoint operators $A=a(x,D)$ and $A_0=a_0(x,D)$  
with symbols $a,\ a_0 \in S^{\mu,0}_\cl$, $\mu\in\nz$.

\begin{lemma}\label{zero3.4}
Let $t>0$ and $e^{-tA}-e^{-tA_0}$ be a trace class operator. Then 
 $$\TR (e^{-tA} -e^{-tA_0})={\rm tr}\;(e^{-tA} -e^{-tA_0}).$$
\end{lemma}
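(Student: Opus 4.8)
The plan is to show that $e^{-tA}-e^{-tA_0}$, which by assumption is trace class, has a symbol in a class to which the regularized trace applies, and that on such operators $\TR$ coincides with the functional-analytic trace. First I would recall from Theorem \ref{zero3.1} that each of $e^{-tA}$ and $e^{-tA_0}$ is a pseudodifferential operator with symbol in $S^{-\infty,0}_\cl$; hence the difference $R:=e^{-tA}-e^{-tA_0}$ also has symbol in $S^{-\infty,0}_\cl$. In particular the $\xi$-order of its symbol is $<-n$, so by Remark \ref{rem:kernel}(a) the regularized trace takes the form
\begin{equation*}
 \TR R=\fpint K_R(x,x)\,dx,
\end{equation*}
where $K_R$ is the Schwartz kernel of $R$ and the finite-part integral is now only in the $x$-variable, applied to the classical symbol $x\mapsto K_R(x,x)\in S^{0}_\cl(\rz^n_x)$.

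The crux is then to identify this finite-part integral with the honest trace $\tr R=\int K_R(x,x)\,dx$. Since $R$ is trace class on $L^2(\rz^n)$ and is a pseudodifferential operator with rapidly decaying (in $\xi$) symbol, its kernel is continuous and $K_R(x,x)$ is integrable with $\tr R=\int_{\rz^n}K_R(x,x)\,dx$; this is the standard fact that the trace of a trace-class $\psi$do equals the integral of its kernel over the diagonal (one may reduce to it via, e.g., writing $R$ as a composition of Hilbert--Schmidt operators, or invoking Mercer-type arguments). So it remains to see that, for $b:=K_R(\cdot,\cdot)|_{\mathrm{diag}}\in S^0_\cl(\rz^n)$ which is moreover Lebesgue-integrable, the finite-part integral $\fpint b(x)\,dx$ agrees with $\int b(x)\,dx$.

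For this I would use the structure of the finite-part integral recalled in \eqref{zero1.A}, \eqref{zero1.B} and Remark \ref{zero1.1.5}. The finite-part integral differs from the genuine integral precisely by the counterterms built from the homogeneous components $b_{(-j)}$ and the sphere integrals $\beta_j=\int_{|x|=1}b_{(-j)}(x)\,d\sigma(x)$. If $b\in L^1(\rz^n)$, then each homogeneous component $b_{(-j)}$, being homogeneous of degree $-j\ge -n$ near infinity and smooth, can only be integrable against the others in the combination that forces $\beta_0=\dots=\beta_{n}=0$; more directly, writing as in Remark \ref{zero1.1.5}
\begin{equation*}
 \fpint b\,dx=\int\Big\{b-\chi\sum_{j=0}^{M}b_{(-j)}\Big\}dx+\sum_{j=0}^{M}\Big\{\int_{|x|\le1}\chi b_{(-j)}\,dx-\tfrac{\beta_j}{n-j}\Big\},
\end{equation*}
the hypothesis $b\in L^1$ forces the tail $\chi\sum_{j\le M}b_{(-j)}$ to be integrable, hence $\beta_j=0$ for $0\le j\le n$ and the homogeneous components of order $>-n$ to vanish at infinity; a short computation then collapses the right-hand side to $\int b\,dx$. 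I expect this last point -- carefully checking that integrability of $b$ kills exactly the divergent counterterms so that $\fpint b=\int b$ -- to be the only nontrivial step, the rest being bookkeeping with the results already established in Section \ref{sec:zero1} and Theorem \ref{zero3.1}. Combining the three identities $\TR R=\fpint b\,dx$, $\fpint b\,dx=\int b\,dx$, and $\int b\,dx=\tr R$ yields the claim.
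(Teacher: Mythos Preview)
Your proof follows essentially the same route as the paper's: invoke Theorem \ref{zero3.1} to get symbols in $S^{-\infty,0}_\cl$, write $\TR R=\fpint K_R(x,x)\,dx$ and $\tr R=\int K_R(x,x)\,dx$, and then observe that the finite-part integral agrees with the Lebesgue integral once the integrand is in $L^1(\rz^n)$. Two small points to clean up: in your displayed formula from Remark \ref{zero1.1.5} you are in the case $\ell=0\in\gz_{\ge-n}$, so the term $j=n$ must be handled by the ``otherwise'' clause there (as written, $\tfrac{\beta_n}{n-n}$ is undefined); and integrability only forces the sphere integrals $\beta_j$ to vanish for $0\le j\le n$, not the homogeneous components $b_{(-j)}$ themselves --- but $\beta_j=0$ is all you need, since then the counterterms in \eqref{zero1.B} disappear and $\fpint b=\lim_{R\to\infty}\int_{|x|\le R}b=\int b$.
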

\begin{proof} From Theorem \ref{zero3.1} we know that both $e^{-tA}$ and $e^{-tA_0}$ are 
pseudodifferential operators with symbol in $S^{-\infty,0}_\cl$. In particular, 
$e^{-tA}-e^{-tA_0}$ is a trace class operator with a continuous 
integral kernel $k(x,y)$. Hence its trace is given by $\displaystyle\int k(x,x)\,dx$ 
(cf., for example, Section 19.3 of  \cite{Horm}). 
On the other hand $k(x,x)$ is a zero order classical symbol and the generalized 
trace of $e^{-tA}-e^{-tA_0}$ equals $\displaystyle\fpint k(x,x)\,dx$. 
Now the claim follows, since we observed in Section \ref{sec:zero1} 
that the finite-part integral coincides with the usual integral, if the integrand is in 
$L_1(\rz^n)$. 
\end{proof}

According to Theorem \ref{zero3.3} ${\rm tr}\;(e^{-tA} -e^{-tA_0})$ has an 
asymptotic expansion as $t\to 0^+$ and is rapidly decreasing as $t\to\infty$. 
Following \cite{Mull} the  \textit{relative zeta function} of $(A,A_0)$ is 
defined as follows:
\begin{equation}\label{zero3.F}
 \zeta(z;A,A_0):=   \frac{1}{\Gamma(z)}\int_0^\infty 
 t^{z-1} {\rm tr}\;(e^{-tA} -e^{-tA_0})\;dt
\end{equation}
In the proof of Theorem \ref{zero3.3} we saw that $\TR e^{-tA}$ and 
$\TR e^{-tA_0}$ are the inverse Mellin transforms of $\Gamma(z)\zeta_A(z)$ 
and $\Gamma(z)\zeta_{A_0}(z)$, respectively, whence, from \eqref{zero3.F}, 
we obtain that 
$$
\zeta(z;A,A_0)= \zeta_A(z)-\zeta_{A_0}(z), 
$$
i.e.\ the relative zeta function coincides with the difference of the regularized 
zeta functions introduced in Section \ref{sec:zero1.3}. The previous expression 
and the results of Section \ref{sec:zero1.3} allow us to recover the properties
 of the regularized zeta function proven in \cite{Mull} for the kind of operators 
we are considering. In particular, we obtain that $\zeta(z;A,A_0)$ is 
holomorphic at $z=0$, and we can conclude: 

\begin{theorem}\label{zero3.5}
Let $A=a(x,D)$ and $A_0=a_0(x,D)$ be positive and self-adjoint with symbols 
$a,\ a_0 \in S^{\mu,0}_\cl$, $\mu\in\nz$, and assume that $e^{-tA}-e^{-tA_0}$ 
is a trace class operator for any $t>0$. Then the relative determinant 
 $$\mathrm{det}(A,A_0):=\exp\Big(-\frac{d}{dz}\Big|_{z=0}\zeta(z;A,A_0)\Big)$$
satisfies 
 $$\mathrm{det}(A,A_0)= \frac{{\rm det } A}{{\rm det} A_0},$$
where the determinants on the right-hand side are defined in 
Definition {\rm\ref{zero1.9}}. 
\end{theorem}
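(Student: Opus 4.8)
The plan is to assemble Theorem~\ref{zero3.5} from the pieces already established. The key structural fact is the identity $\zeta(z;A,A_0)=\zeta_A(z)-\zeta_{A_0}(z)$ derived just before the statement; once this is in hand, the theorem reduces to a computation of the derivative at $z=0$. First I would record that, by Theorem~\ref{zero3.3} and Lemma~\ref{zero3.4}, $\operatorname{tr}(e^{-tA}-e^{-tA_0})$ has an asymptotic expansion as $t\to0^+$ with no $t^0\log t$ term (since $c_{k1}=0$ for $k=n$) and decays rapidly as $t\to\infty$, so the Mellin integral in \eqref{zero3.F} defines a function holomorphic for $\operatorname{Re}z>n/\mu$ that extends meromorphically to $\cz$. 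The absence of the $\log t$ term at order $t^0$, together with the simple pole of $\Gamma(z)$ at $z=0$, is exactly what forces $z=0$ to be a regular point of $\zeta(z;A,A_0)$: this mirrors the argument already given for $\zeta_A$ in the discussion following Definition~\ref{zero1.8}, where $z=0$ is a removable singularity because the integral factors in \eqref{zero1.I} vanish there.

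Next I would make the subtraction rigorous. For $\operatorname{Re}z$ large both $\zeta_A(z)=\TR(A^{-z})$ and $\zeta_{A_0}(z)=\TR(A_0^{-z})$ are separately well-defined by Definition~\ref{zero1.8}, and the proof of Theorem~\ref{zero3.3} identifies $\Gamma(z)\zeta_A(z)$ and $\Gamma(z)\zeta_{A_0}(z)$ as the Mellin transforms of $\TR e^{-tA}$ and $\TR e^{-tA_0}$ respectively. Subtracting and invoking linearity of the Mellin transform together with Lemma~\ref{zero3.4} gives $\Gamma(z)\zeta(z;A,A_0)=\Gamma(z)(\zeta_A(z)-\zeta_{A_0}(z))$ as meromorphic functions, hence $\zeta(z;A,A_0)=\zeta_A(z)-\zeta_{A_0}(z)$ on all of $\cz$. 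Both $\zeta_A$ and $\zeta_{A_0}$ are, by Theorem~\ref{zero1.7} applied to the complex powers, meromorphic with at worst simple poles among the points $(n-k)/\mu$, and each is regular at $z=0$; therefore their difference is regular at $z=0$ as well, and
\begin{equation*}
 \frac{d}{dz}\Big|_{z=0}\zeta(z;A,A_0)
 =\frac{d}{dz}\Big|_{z=0}\zeta_A(z)-\frac{d}{dz}\Big|_{z=0}\zeta_{A_0}(z).
\end{equation*}

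Finally I would exponentiate. Using Definition~\ref{zero1.9} for $\det A$ and $\det A_0$ and the displayed identity for the derivative,
\begin{equation*}
 \mathrm{det}(A,A_0)
 =\exp\Big(-\frac{d}{dz}\Big|_{z=0}\zeta(z;A,A_0)\Big)
 =\frac{\exp\big(-\frac{d}{dz}\big|_{z=0}\zeta_A(z)\big)}{\exp\big(-\frac{d}{dz}\big|_{z=0}\zeta_{A_0}(z)\big)}
 =\frac{\mathrm{det}\,A}{\mathrm{det}\,A_0},
\end{equation*}
which is the assertion. I expect the main obstacle to be the regularity of $\zeta(z;A,A_0)$ at $z=0$ and the bookkeeping that makes the meromorphic identity $\zeta(z;A,A_0)=\zeta_A(z)-\zeta_{A_0}(z)$ valid globally (not merely on the half-plane of absolute convergence): one must check that the trace-class hypothesis on $e^{-tA}-e^{-tA_0}$ is what reconciles the individually regularized objects $\TR e^{-tA}$, $\TR e^{-tA_0}$ with the genuinely convergent $\operatorname{tr}(e^{-tA}-e^{-tA_0})$ of Müller's construction, and that the short-time expansion of the difference has the vanishing $\log$-coefficient at the critical order. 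All of this is, however, already contained in Theorem~\ref{zero3.3}, Lemma~\ref{zero3.4}, and the remarks following them, so the proof is essentially a synthesis rather than a new estimate.
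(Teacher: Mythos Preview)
Your proposal is correct and follows essentially the same route as the paper: the paper's proof is precisely the synthesis you describe, deriving $\zeta(z;A,A_0)=\zeta_A(z)-\zeta_{A_0}(z)$ from Lemma~\ref{zero3.4} and the Mellin transform identification in the proof of Theorem~\ref{zero3.3}, then invoking the regularity of $\zeta_A$ and $\zeta_{A_0}$ at $z=0$ established after Definition~\ref{zero1.8} to differentiate and exponentiate. Your additional remarks on the vanishing of the $t^0\log t$ coefficient are consistent with, though slightly more explicit than, the paper's treatment.
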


\section{Regularized Zeta Function and Determinant in Case $m\not=0$}\label{sec:zero6}

In this section we extend our definition of determinants to operators $a(x,D)$ with symbol 
$a\in\smmu_\cl$, $m,\mu>0$. We shall assume that $a(x,D)$ is $\Lambda$-elliptic and has 
no spectrum in the sector $\Lambda$ of \eqref{zeroi.C}.

\begin{definition}\rm\label{zero6.1}
For $m,\mu\in\cz$ such that $\mu,\,m\not\in\gz_{\ge-n}$ we define 
 $$\TR:S^{\mu,m}_\cl\longrightarrow\cz,\quad a(x,D)\mapsto\fpiint a(x,\xi)\,dx\dbar\xi.$$
\end{definition}

Note that if $\re m<-n$ and $\re\mu<-n$ then $\TR a(x,D)$ is the usual trace of the 
trace class operator $a(x,D)$ in $\mathcal L(L_2(\rz^n))$. 
The proposition, below, follows from Proposition \ref{zero1.3}. 

\begin{proposition}\label{zero6.2}
Let $a(x,\xi,z)\in S^{\mu(z),m(z)}_\cl$ be a holomorphic family on $U\subseteq\cz$ with 
$\mu\not\equiv\mathrm{const}$,  $m\not\equiv\mathrm{const}$, and $A(z)=a(x,D,z)$. Let 
 $$\calP=\{z\in U\st \mu(z)+n\in\nz_0\text{\rm\ or } m(z)+n\in\nz_0\}.$$
Then $z\mapsto\TR A(z)$ is meromorphic  on $U$ with poles at most 
in $\calP$. If $z_0\in\calP$ and both $\mu(z_0)\in\gz$ and $m(z_0)\in \gz$ then, 
near $z_0$, 
\begin{align*}
 \TR A(z)\equiv&\,
   -\frac{(2\pi)^{-n}}{(m(z_0)-m(z))(\mu(z_0)-\mu(z))}\int_{|\xi|=1} \int_{|x|=1}
   a^{(\mu(z)-\mu(z_0)-n)}_{(m(z)-m(z_0)-n)}(z)\,d \sigma(x) d \sigma(\xi)+\\
&+\frac{1}{m(z_0)-m(z)}
   \fpint\!\!\!\int_{|x|=1} a_{(m(z)-m(z_0)-n)}(z)\, d \sigma(x) \dbar \xi+\\
&+\frac{1}{m(z_0)-m(z)}\sum_{k=0}^{n+\mu(z_0)-1}\frac{(2\pi)^{-n}}{n+\mu(z)-k}
   \int_{|\xi|=1}\int_{|x|=1}
   a^{(\mu(z)-k)}_{(m(z)-m(z_0)-n)}(z)\,d\sigma(x)d\sigma(\xi)+\\
&+\frac{1}{\mu(z_0)-\mu(z)}
   \fpint\!\!\!\int_{|\xi|=1} 
   a^{(\mu(z)-\mu(z_0)-n)}(z)\, d \sigma(\xi) \dbar x+\\
&+\frac{1}{\mu(z_0)-\mu(z)}\sum_{j=0}^{n+m(z_0)-1}\frac{(2\pi)^{-n}}{n+m(z)-j}
   \int_{|\xi|=1}\int_{|x|=1}
   a^{(\mu(z)-\mu(z_0)-n)}_{(m(z)-j)}(z)\,d\sigma(x)d\sigma(\xi)
\end{align*}
modulo a function which is holomorphic in $z_0$. If $\mu(z_0)\in\gz$ and $m(z_0)\notin\gz$, 
the corresponding formula is obtained from the above one by deleting the first, second, and 
third line, and taking in the last line the summation in $j$ from $j=0$ up to 
$j=\entier{\re m(z_0)+n}$. Analogously, when $\mu(z_0)\notin\gz$ and $m(z_0)\in\gz$, the 
corresponding formula is obtained by deleting the first, fourth, and fifth line and taking in 
the third line the summation in $k$ from $k=0$ up to $j=\entier{\re\mu(z_0)+n}$.
\end{proposition}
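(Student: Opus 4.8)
The plan is to reduce Proposition \ref{zero6.2} to the explicit finite-part formula of Proposition \ref{zero1.3}, tracking which terms stay bounded and which acquire poles as $z\to z_0$. First I would fix $z_0\in\calP$ with both $\mu(z_0),m(z_0)\in\gz$, choose integers $N\ge\entier{n+\re\mu(z_0)}$ and $M\ge\entier{n+\re m(z_0)}$, and observe that for $z$ in a small punctured neighbourhood of $z_0$ these same $N,M$ are admissible in the formula of Proposition \ref{zero1.3} applied to $a(z)\in S^{\mu(z),m(z)}_\cl$ (shrinking the neighbourhood so that $\entier{n+\re\mu(z)}$ and $\entier{n+\re m(z)}$ do not exceed $N,M$). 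Then $\TR a(z)=\fpiint a(x,\xi;z)\,dx\dbar\xi$ is given by the seven-line expression there, and the task is to identify its singular part in $z$.

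The key point is that each integral appearing in Proposition \ref{zero1.3} depends holomorphically on $z$ as long as it is an \emph{honest} Lebesgue integral over a compact set or over $\rz^n$ of an integrable integrand; the only sources of poles are (i) the prefactors $\gamma(n+\mu(z),k)=\tfrac{1}{n+\mu(z)-k}$, which blow up precisely when $k\to n+\mu(z_0)\in\nz_0$, and (ii) the prefactors $\gamma(n+m(z),j)=\tfrac{1}{n+m(z)-j}$, which blow up when $j\to n+m(z_0)$, and (iii) the iterated finite-part integrals $\fpint$ themselves, whose defining subtraction denominators $n+\mu(z)-j$ (resp. $n+m(z)-j$) degenerate at the same integer values — but by the holomorphic-family analysis underlying Theorem \ref{zero1.7} (equivalently, by the argument of Proposition \ref{zero1.3} applied with a holomorphic parameter) each such $\fpint$ is itself meromorphic with a simple pole whose residue is the corresponding sphere integral over $|\xi|=1$ or $|x|=1$ of the critical homogeneous component. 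So I would go line by line: the double full-integral term (line 1 of Prop. \ref{zero1.3}) and the four compact-domain correction sums (lines 2--4) are holomorphic at $z_0$ and contribute nothing to the singular part; the term $-\sum_k\gamma(n+\mu(z),k)\fpint\!\int_{|\xi|=1}a^{(\mu(z)-k)}d\sigma(\xi)\dbar x$ contributes a pole from $k=n+\mu(z_0)$ in the $\gamma$-factor \emph{and} poles from the inner $\fpint$ at all $k$; collecting these against the analogous $m$-line and the final pure double-sphere term, and writing $\tfrac{1}{n+\mu(z_0)-(n+\mu(z_0))}$-type degeneracies via the substitutions $k=n+\mu(z_0)$, $j=n+m(z_0)$, one recovers exactly the five displayed lines of Proposition \ref{zero6.2}, with the two leftover iterated finite-part integrals $\fpint\!\int_{|x|=1}a_{(m(z)-m(z_0)-n)}\,d\sigma(x)\dbar\xi$ and $\fpint\!\int_{|\xi|=1}a^{(\mu(z)-\mu(z_0)-n)}\,d\sigma(\xi)\dbar x$ carried along as they stand. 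For the two mixed cases I would simply note that when $m(z_0)\notin\gz$ the $m$-type $\gamma$-factors never blow up and the $m$-type $\fpint$ is a genuine holomorphic integral with $L=\entier{n+\re m(z_0)}$ subtractions, so the first, second, and third lines vanish and the $j$-summation in the last line runs only up to $\entier{\re m(z_0)+n}$; the case $\mu(z_0)\notin\gz$ is symmetric.

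The main obstacle I expect is the bookkeeping around the degenerate terms: when $k=n+\mu(z_0)$ the factor $\gamma(n+\mu(z),k)$ has a pole but also the inner $\fpint_{\!\xi}$ over $|\xi|=1$ of $a^{(\mu(z)-k)}=a^{(\mu(z)-\mu(z_0)-n)}$ is \emph{itself} the object that should not be further expanded — one must be careful not to double-count a pole or to illegitimately split $\fpint$ of a symbol of critical order $-n$ into a convergent integral plus a $\gamma$-term (which would be ill-defined). The clean way around this is to treat $\mu(z),m(z)$ as genuine holomorphic parameters throughout, never specializing to $z=z_0$ until the very end, so that every $\gamma(\cdot,\cdot)$ and every $\fpint$ is a bona fide meromorphic function of $z$ near $z_0$; the identity of Proposition \ref{zero1.3} is then an identity of meromorphic functions, and extracting $\equiv$ (equality modulo holomorphic functions) is a matter of discarding all holomorphic summands. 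I would also double-check the residue normalizations: the $(2\pi)^{-n}=\dbar$-factors and the placement of $\frac{1}{n+\mu(z)-k}$ versus $\frac{1}{n+m(z)-j}$ must match the conventions in \eqref{zero1.C} and \eqref{zero1.A}, exactly as they were pinned down in the proof of Theorem \ref{zero1.7}. Since Proposition \ref{zero1.3} is quoted as having a straightforward proof, and Theorem \ref{zero1.7} already carried out the one-variable version of this pole extraction, the two-variable statement follows by the same method applied to both slots simultaneously, and I would phrase the proof as ``follows from Proposition \ref{zero1.3} by extracting the singular part in $z$ exactly as in the proof of Theorem \ref{zero1.7}, now in both the $\xi$- and the $x$-homogeneity,'' supplying the line-by-line identification sketched above and omitting the routine residue computations.
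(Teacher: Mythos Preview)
Your proposal is correct and matches the paper's approach exactly: the paper simply states that Proposition \ref{zero6.2} ``follows from Proposition \ref{zero1.3}'' without further detail, and your line-by-line extraction of the singular part from that explicit formula is precisely the intended argument. Your careful treatment of the degenerate indices $k=n+\mu(z_0)$ and $j=n+m(z_0)$, and your handling of the mixed cases, fills in the details the paper omits.
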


\begin{proposition}\label{zero6.3}
Let $A_j=a_j(x,D)$, $j=0,1$, with $a_j\in S^{\mu_j,m_j}_{\cl}$ and  
$\mu_0+\mu_1,m_0+m_1\notin\gz_{\ge-n}$. Then $\TR(A_0A_1)=\TR(A_1A_0)$. 
\end{proposition}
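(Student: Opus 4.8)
The plan is to reduce the identity $\TR(A_0A_1)=\TR(A_1A_0)$ to the case already covered by Proposition \ref{zero1.7} (or rather Proposition \ref{zero6.2}) via a holomorphic deformation argument. First I would introduce a holomorphic family connecting the situation to one with convergent integrals: choose holomorphic functions $\mu(z)$, $m(z)$ with $\mu(0)=\mu_0+\mu_1$, $m(0)=m_0+m_1$ and $\re\mu(z),\re m(z)<-n$ for $\re z$ large, for instance by composing $a_0\#a_1$ with the symbols $[\xi]^{\mu(z)-\mu_0-\mu_1}[x]^{m(z)-m_0-m_1}$ on one side, and analogously for $a_1\#a_0$. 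More precisely, let $b(z)=[\xi]^{-s(z)}[x]^{-t(z)}$ be an order-reducing family with $s(0)=t(0)=0$ and set $A_0(z)=a_0(x,D)\,b(z)^{1/2}$-style factors — but the cleanest choice is simply $A_1(z)=b_1(x,D,z)A_1$ where $b_1(x,D,z)$ has symbol in $S^{s(z),t(z)}_{\cl}$ with $s(0)=t(0)=0$, so that $A_0A_1(z)$ and $A_1(z)A_0$ both form holomorphic families of SG-operators whose orders at $z=0$ are those of the original product, and whose orders have strictly negative real part (below $-n$) for $\re z$ large.

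The key observation is then the \emph{trace property in the convergent range}: when $\re s(z),\re t(z)$ are sufficiently negative both $A_0A_1(z)$ and $A_1(z)A_0$ are trace-class operators on $L^2(\rz^n)$, their $\TR$ coincides with the operator trace by the remark following Definition \ref{zero1.6} (and the discussion around \eqref{zero1.6A}), and the ordinary operator trace satisfies $\tr(ST)=\tr(TS)$ whenever both $S,T$ are such that $ST$ and $TS$ are trace-class — which holds here because, e.g., $A_0$ maps into a fixed Sobolev space and $A_1(z)$ is trace-class, or one can symmetrize the regularizer between the two factors so that each of $A_0A_1(z)$ and $A_1(z)A_0$ is visibly a product of two Hilbert–Schmidt operators. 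Hence $\TR(A_0A_1(z))=\TR(A_1(z)A_0)$ for $\re z$ large. The second step is to invoke Proposition \ref{zero6.2}: both sides extend to meromorphic functions of $z$ on a neighborhood of $0$, agreeing on a half-plane, hence agreeing identically by the identity theorem; in particular they agree — and are both holomorphic — at $z=0$ because the hypothesis $\mu_0+\mu_1,m_0+m_1\notin\gz_{\ge-n}$ places $z=0$ outside the putative pole set $\calP$. Evaluating at $z=0$ and using $b_1(x,D,0)=\mathrm{id}$ (so $A_1(0)=A_1$) gives $\TR(A_0A_1)=\TR(A_1A_0)$.

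The main obstacle I anticipate is the bookkeeping needed to make the trace-class/Hilbert–Schmidt factorization genuinely rigorous in the $\re z$-large range — one must ensure the order-reducing operators $b_1(x,D,z)$ really exist as a holomorphic family in $S^{s(z),t(z)}_{\cl}$ with $b_1(x,D,0)=\mathrm{id}$ and with good $L^2$-mapping properties (this is standard for SG-operators, using $[\xi]^{s}[x]^{t}$ as in Section \ref{sec:zero5}, but the holomorphy at $z=0$ should be noted explicitly), and that for $\re z$ large the orders of \emph{both} factors, suitably split, have real part $<-n/2$ so that each is Hilbert–Schmidt on $L^2$. A secondary, purely technical point is checking that $a_0\#(b_1(z)\#a_1)$ and $(b_1(z)\#a_1)\#a_0$ are indeed holomorphic families in the sense of the definition preceding Proposition \ref{zero6.2}, so that that proposition applies; this follows from the continuity of the Leibniz product $\#$ in the relevant symbol topologies together with holomorphy of each factor. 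Once these routine facts are in place, the argument is just: convergent-range trace property $+$ meromorphic continuation $+$ regularity at $z=0$.
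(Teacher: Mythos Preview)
Your strategy is correct and matches the paper's: deform holomorphically into the trace-class range, use ordinary cyclicity of $\tr$ there, extend both sides meromorphically via Proposition~\ref{zero6.2}, and evaluate at the point where the deformation is trivial, which is a regular point by the hypothesis on $\mu_0+\mu_1,\,m_0+m_1$.

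The only substantive difference is the choice of holomorphic family. You multiply $A_1$ by an order-reducing family $b_1(x,D,z)$ with symbol of the type $[\xi]^{s(z)}[x]^{t(z)}$ and $b_1(x,D,0)=\mathrm{id}$, then evaluate at $z=0$. The paper instead fixes a large positive-order $\Lambda$-elliptic $P=p(x,D)$ with $p(x,\xi)=c\spk{x}^m\spk{\xi}^\mu$, sets $Q=P+A_0$, and uses the families $B_0(z)=(Q^z-P^z)A_1$ and $B_1(z)=A_1(Q^z-P^z)$; since $Q^1-P^1=A_0$, one evaluates at $z=1$, while for $\re z$ very negative the orders drop into the trace-class range. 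Your deformation is more elementary in that it avoids the complex-power machinery of Section~\ref{sec:zero5.3}; the paper's buys a clean separation of the argument into a fixed-order trace-class lemma (where it assumes $\re(\mu_0/2),\re(m_0/2),\re(\mu_0/2+\mu_1),\re(m_0/2+m_1)<-n$ and inserts $D$ with symbol $[x]^{m_0/2}[\xi]^{\mu_0/2}$ to justify $\tr(A_0A_1)=\tr((A_0D^{-1})(DA_1))=\tr((DA_1)(A_0D^{-1}))=\tr(A_1A_0)$) plus the analytic continuation. The ``main obstacle'' you anticipate---symmetrizing the regularizer so that both factors become Hilbert--Schmidt---is precisely what this intermediate $D$ accomplishes, so your diagnosis and remedy are on target.
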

\begin{proof}
First assume that the numbers $\mu_0/2$, $m_0/2$, $\mu_0/2+\mu_1$, and $m_0/2+m_1$ 
have real parts smaller than $-n$. Let $D$ be the operator with symbol 
$[x]^{m_0/2}[\xi]^{\mu_0/2}$. Then 
 $$\TR(A_0A_1)=\tr(A_0A_1)=\tr((A_0D^{-1})(DA_1))=\tr(D(A_1A_0D^{-1}))
    =\tr(A_1A_0)=\TR(A_1A_0),$$
using the commutativity of the usual trace on trace class operators. In other words, 
the result holds true if one of the operators has orders with sufficiently negative real 
parts. 

For the general case choose a positive integer $L$ such that $\mu:=\mu_0+L>0$ and 
$m:=m_0+L>0$. Choosing the constant $c$ in $p(x,\xi):=c\spk{x}^m\spk{\xi}^\mu$ 
sufficiently large we can achieve that both $P:=p(x,D)$ and $Q:=P+A_0$ are 
$\Lambda$-elliptic operators not having spectrum in $\Lambda$. Now consider the 
holomorphic families 
 $$B_0(z)=(Q^z-P^z)A_1,\qquad B_1(z)=A_1(Q^z-P^z).$$
If $q(x,\xi)$ is the symbol of $Q$ then, by construction, $q^{(\mu-j)}=p^{(\mu-j)}$ 
and $q_{(m-k)}=p_{(m-k)}$ for $0\le j,k\le L-1$. This implies that 
 $$B_0(z),B_1(z)\in S^{mz+m_1-L,\mu z+\mu_1-L}_\cl$$
$($as follows from Theorem 3.2 and Proposition 3.1 of \cite{MSS}$)$. 
By Proposition \ref{zero6.2} both $\TR B_0(z)$ and $\TR B_1(z)$ are meromorphic 
functions on the whole plane that are holomorphic in $z=1$. By the first part of the proof, 
they coincide whenever $\re z$ is sufficiently negative. Hence we can conclude that 
$\TR B_0(1)=\TR B_1(1)$, i.e. $\TR(A_0A_1)=\TR(A_1A_0)$. 
\end{proof}

Applying Proposition \ref{zero6.2} to the holomorphic family of complex powers of a 
$\Lambda$-elliptic operator $a(x,D)$ that has no spectrum in $\Lambda$, and using 
that $A^0=1$, we obtain that 
\begin{equation}\label{zero6.A}
 \TR a(x,D)^{-z}\equiv
 -\frac{(2\pi)^{-n}}{m\mu z^2}\int_{|\xi|=1} \int_{|x|=1}
 a^{(-\mu z-n)}_{(-m z-n)}(z)\,d \sigma(x) d \sigma(\xi)
\end{equation}
near $z=0$. Therefore $\TR a(x,D)^{-z}$ has possibly a pole in $z=0$ which is simple, 
since the integral term tends to zero for $z\to0$.

\begin{definition}\rm\label{zero6.4}
Let $a\in S^{\mu,m}_\cl$, $m,\mu>0$, be parameter-elliptic with respect to $\Lambda$, 
and assume that the resolvent of $A:=a(x,D)$ exists in the whole sector $\Lambda$. 
Then define the zeta function of $A$ as the meromorphic function 
 $$\zeta_A(z)=\TR A^{-z},$$
and the determinant of $A$ by 
 $${\rm det}\,A:=\exp\Big(-\mathrm{res}\big|_{z=0}\frac{\zeta_A(z)}{z^2}\Big).$$
\end{definition}

Note that \textit{if} the zeta function of $A=a(x,D)$ is holomorphic in $z=0$, then the 
above defined determinant coincides with the usual zeta regularized determinant, 
i.e.\ $\det\,A=\exp(-\zeta^\prime_A(0))$. 

To give a general criterion which implies that the zeta function has no pole in $z=0$
let us recall the notion of symbols with parity: We say that a symbol $a\in\smmu_\cl$ 
has \textit{even-even parity in $\xi$}, if
 $$a^{(\mu-k)}(x,-\xi)=(-1)^{\mu-k}a^{(\mu-k)}(x,\xi),\qquad 
  k\in\nz_0, x\in \rz^n, \xi\not=0.$$
We say that $a$ has \textit{even-even parity in 
$x$} if the corresponding relations hold for all $a_{(m-j)}$, $j\in\nz_0$, for all 
$x\not=0$ and $\xi\in\rz^n$. Similarly, $a$ has \textit{even-odd parity in} $\xi$, if 
$$a^{(\mu-k)}(x,-\xi)=(-1)^{\mu-k-1}a^{(\mu-k)}(x,\xi),\qquad 
  k\in\nz_0, x\in \rz^n, \xi\not=0$$
and \textit{even-odd parity in} $x$ if the corresponding relations hold for all 
$a_{(m-j)}$, $j\in\nz_0$, for all 
$x\not=0$ and $\xi\in\rz^n$. 

\begin{theorem}\label{zero6.5} 
Let $n$ be odd, $A=a(x,D)$ for $a\in S^{\mu,m}_\cl$ with $\mu,m>0$ be 
$\Lambda$-elliptic without spectrum in  $\Lambda$. Let either $\mu$ be an even integer 
and $a$ have even-even parity in $\xi$, or $m$ be an even integer and $a$ have 
even-even parity in $x$. Then the zeta function of $A$ is holomorphic in $z=0$.
\end{theorem}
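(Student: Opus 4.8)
The plan is to reduce the claim to the vanishing of the specific integral appearing in \eqref{zero6.A}. By Definition \ref{zero6.4} and the discussion following \eqref{zero6.A}, the zeta function $\zeta_A(z)=\TR A^{-z}$ is holomorphic in $z=0$ precisely when the residue there vanishes, and \eqref{zero6.A} shows that the only possible pole is the simple one with coefficient proportional to
\[
 I(z):=\int_{|\xi|=1}\int_{|x|=1} a^{(-\mu z-n)}_{(-mz-n)}(x,\xi;z)\,d\sigma(x)\,d\sigma(\xi),
\]
so everything comes down to showing that $\lim_{z\to 0} I(z)=0$ with the right order, or rather that the meromorphic function $I(z)/z^2$ has no pole — equivalently, that $I(z)=O(z^2)$, or at least that the residue contribution vanishes. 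Actually $I(z)\to 0$ already kills the double pole; what must be shown is that it vanishes to \emph{second} order, i.e.\ that $I'(0)=0$ as well, since $\zeta_A(z)/z^2$ near $0$ behaves like $-\frac{(2\pi)^{-n}}{m\mu}I(z)/z^4$... no: $\zeta_A(z)\equiv -\frac{(2\pi)^{-n}}{m\mu z^2}I(z)$, so $\zeta_A(z)/z^2 \equiv -\frac{(2\pi)^{-n}}{m\mu z^4}I(z)$, and holomorphy of $\zeta_A$ at $0$ requires $I(z)=O(z^2)$. Since $I(0)=0$ automatically (the integrand at $z=0$ comes from $A^0=1$, whose relevant homogeneous component vanishes), the real content is $I'(0)=0$.

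The key step is therefore to compute $\frac{d}{dz}\big|_{z=0}$ of the mixed homogeneous component $a^{(-\mu z-n)}_{(-mz-n)}(x,\xi;z)$ of the symbol of $A^{-z}$ and show its integral over the product of unit spheres vanishes, using the parity hypothesis. First I would invoke the description of the symbol of $A^{-z}$ from \cite{MSS} (as used already in Definition \ref{zero1.8} and Proposition \ref{zero6.2}): the homogeneous components are built from the resolvent symbol $(\lambda-\widetilde a)^{-1}$ and its derivatives via the Cauchy integral $\frac{i}{2\pi}\int \lambda^{-z}(\lambda-A)^{-1}d\lambda$. The parity of $a$ in $\xi$ (resp.\ $x$) propagates through the symbolic resolvent construction and through the complex-power construction: if $a$ has even-even parity in $\xi$ with $\mu$ even, then each $a^{(\mu-k)}(x,-\xi)=(-1)^{\mu-k}a^{(\mu-k)}(x,\xi)$, and one checks inductively — exactly as in the classical Kontsevich--Vishik / Grubb argument for the noncommutative residue and canonical trace — that the homogeneous components $b^{(-\mu z - k)}(x,\xi;z)$ of the symbol of $A^{-z}$ satisfy the same parity rule $b^{(-\mu z-k)}(x,-\xi;z)=(-1)^{-\mu z - k}\,b^{(-\mu z-k)}(x,\xi;z)$; here one must be careful that $(-1)^{-\mu z}$ is to be read via the branch making it $e^{-i\pi\mu z}$, and it drops out of the relevant derivative-at-zero computation, or more precisely the parity statement is about the integer-shift part $k$, with the $z$-dependence entering only through an overall scalar $e^{\mp i\pi\mu z}$ that factors out.

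Concretely, for the case $\mu$ even with even-even parity in $\xi$: the component $a^{(-\mu z-n)}_{(-mz-n)}$ has $\xi$-homogeneity degree $-\mu z - n$; writing $-\mu z - n = \mu - k$ with $k = \mu + \mu z + n$, the parity factor is $(-1)^{\mu - k} = (-1)^{-\mu z - n}$; since $n$ is odd this is $(-1)^{\text{odd}}\cdot(-1)^{-\mu z}$, i.e.\ the component is an \emph{odd} function of $\xi$ up to the scalar $e^{-i\pi\mu z}$. Hence for every fixed $z$ in a neighbourhood of $0$, the integral $\int_{|\xi|=1}(\cdots)\,d\sigma(\xi)$ over the unit sphere of an odd function vanishes identically, and so $I(z)\equiv 0$ near $z=0$, which is even stronger than needed. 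The case $m$ even with even-even parity in $x$ is symmetric, using oddness in $x$ and the symmetry \eqref{eq:fubini} / Proposition \ref{zero1.3} to do the $x$-sphere integral first. Thus $\zeta_A(z)\equiv 0$ modulo a holomorphic function near $z=0$, in particular holomorphic at $0$.

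The main obstacle is the bookkeeping in the second step: verifying rigorously that the parity relations of $a$ are inherited by all homogeneous components of the symbol of $A^{-z}$. This requires tracking parity through (i) the symbolic inversion giving the resolvent parametrix — where one must confirm that the recursive formulas for $p^{(-\mu-i)}$ (analogous to those in Section \ref{sec:zero2}, now with $m\ne 0$) preserve the alternating parity, each $\partial_\xi^\alpha$ contributing a sign $(-1)^{|\alpha|}$ that matches the degree shift — and (ii) the Cauchy integral over $\lambda$, which acts only on the scalar $\lambda$-dependence and thus preserves parity in $(x,\xi)$ while producing the $\lambda^{-z}$-weighted sum; the only subtlety there, the branch of $(-1)^{-\mu z}$, is harmless as explained. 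One should also double-check that the simultaneous use of \emph{even} integer order $\mu$ (resp.\ $m$) is exactly what is needed so that the parity sign at the critical degree $-\mu z - n$ is the ``wrong'' one, i.e.\ odd, forcing the sphere integral to vanish — with $n$ odd this is where the oddness of $n$ gets consumed. Once this inheritance lemma is in hand, the conclusion is immediate, so I would state and prove that lemma first and then close the argument in one line.
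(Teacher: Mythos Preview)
Your approach is essentially the paper's: reduce to the vanishing of the sphere integral in \eqref{zero6.A} and use parity inheritance through the complex-power construction to show the integrand is odd in $\xi$ (resp.\ $x$). One correction worth making: the parity factor for the $k$-th $\xi$-homogeneous component of the symbol of $A^{-z}$ is simply $(-1)^k$, not $(-1)^{-\mu z-k}$. Since $\mu$ is even, $a^{(\mu)}(x,\cdot)$ is already \emph{even} in $\xi$, so the top component $(a^{(\mu)})^{-z}$ is invariant under $\xi\mapsto-\xi$; the recursive formulae then produce the integer sign $(-1)^k$ at each step with no $z$-dependent scalar at all. This is exactly what the paper asserts, citing \cite{MSS}. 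With the clean formula the relevant component (index $k=n$, $n$ odd) is genuinely odd in $\xi$, so $I(z)\equiv 0$ identically, and your detour through orders of vanishing and the spurious factor $e^{-i\pi\mu z}$ disappears.
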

\begin{proof}
Let $a(x,\xi,z)$ denote the symbol of $A^{-z}$. Let $\mu$ be an even integer and 
assume that $a(x,\xi)$ is even-even in $\xi$. The explicit formulas for the homogeneous 
components of the symbols of the complex powers derived in Section 3.4 of \cite{MSS} 
show that
 $$a^{(-\mu z-k)}(x,-\xi,z)=(-1)^{k}a^{(-\mu z-k)}(x,\xi,z),\qquad
   a^{(-\mu z-k)}_{(-mz-j)}(x,-\xi,z)=(-1)^{k}a^{(-\mu z-k)}_{(-mz-j)}(x,\xi,z)$$
for all $j,k\in\nz_0$ and all $z\in\cz$, i.e.\ all the $\xi$-homogeneous components are 
odd functions in $\xi$. Hence the right-hand side in \eqref{zero6.A} equals zero, 
i.e. the zeta function is holomorphic near zero. 
This concludes the proof of the first case. The other case is analogous.
\end{proof}

\begin{remark}In a similar way one can establish the regularity of the zeta function 
in zero in case $n$ is odd, $\mu$ is an odd integer and 
$a$ has even-odd parity in $\xi$, or $m$ is an odd integer and $a$ has 
even-odd parity in $x$. 
\end{remark}

\subsection{Relation to Other Trace Functionals}

Let 
 $$\calD=\Big\{A=a(x,D)\mid 
   a\in \smmu_\cl\text{ with }\mu,m\in\cz\setminus\gz_{\ge-n}\Big\}.$$
A regularized trace functional on $\calD$ is a map $\tau:\calD\to\cz$ with the following three 
properties:
\begin{itemize}
 \item[(1)] $\tau$ is linear in the sense that 
  $\tau(\lambda_0A_0+\lambda_1A_1)=\lambda_0\tau(A_0)+\lambda_1\tau(A_1)$ whenever 
  $A_0,A_1$ and $\lambda_0A_0+\lambda_1A_1$ belong to $\calD$. 
 \item[(2)] $\tau(A_0A_1)=\tau(A_1A_0)$ 
  whenever $A_0A_1$ and $A_1A_0$ belong to $\calD$. 
 \item[(3)]   $\tau(A)=\tr A$ whenever $A$ is a trace class operator.
 \end{itemize}
By Proposition \ref{zero6.3} $\TR$ is such a regularized trace functional. 
The following theorem clarifies the relation between an arbitrary regularized trace functional and $\TR$: 

\begin{theorem}\label{thm:unique}
Let $\tau$  satisfy $(1)-(3)$. Then$:$ 
\begin{itemize}
 \item[a$)$] $\tau=\TR$ on any $S^{\mu,m}_\cl$ with $\mu\not=-n-1$ and $m\not=-n-1$. 
 \item[b$)$] On $S^{-n-1,m}_\cl$ the difference $\tau-\TR$ depends only on the homogeneous 
   components $a^{(-n-1)}_{(m-k)}$ with $k=0,\ldots,\entier{\re m+n}$.  
 \item[c$)$] On $S^{\mu,-n-1}_\cl$ the difference $\tau-\TR$ depends only on the homogeneous 
   components $a_{(-n-1)}^{(\mu-j)}$ with $j=0,\ldots,\entier{\re \mu+n}$.  
\end{itemize}
\end{theorem}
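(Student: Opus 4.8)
The plan is to reduce everything to the diagonal case and then exploit a commutator trick. First I would note that any functional $\tau$ satisfying (1)--(3) must vanish on commutators $[B_0,B_1]=B_0B_1-B_1B_0$ whenever all three operators lie in $\calD$; this is immediate from (2). The strategy is therefore to write, for a given $a\in S^{\mu,m}_\cl$, the operator $A=a(x,D)$ as $\TR$-part plus a sum of commutators plus a trace-class remainder, so that applying $\tau$ and applying $\TR$ give the same answer by (1)--(3) together with Proposition \ref{zero6.3}. The natural device is to introduce, for a parameter $s$, the operator $\Lambda_s$ with symbol $[x]^s$ or $[\xi]^s$ and consider $A^\prime(s)=[\Lambda_s, B]$ for suitable $B$; differentiating in $s$ produces operators whose symbols involve logarithmic derivatives of $[x]$ and $[\xi]$, and integrating in $s$ lets one shift the order of $A$ into a range where both $\re\mu<-n$ and $\re m<-n$, where $\tau=\tr=\TR$ by (3) and Remark \ref{rem:kernel}. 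The bookkeeping of which homogeneous components survive this procedure is exactly what produces the list of exceptional components in (b) and (c): when $\mu=-n-1$ (resp.\ $m=-n-1$) the relevant component $a^{(\mu-j)}_{(m-k)}$ has $\xi$-homogeneity $-n$ (resp.\ $x$-homogeneity $-n$) after the shift, and the commutator construction cannot absorb a symbol whose $\xi$- or $x$-Fourier transform has a genuine $\log$-singularity at the diagonal — these are precisely the terms on which $\tau-\TR$ is allowed to be a nonzero (finite-dimensional) functional.

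Concretely I would proceed in the following steps. \textbf{Step 1.} Show $\tau$ vanishes on commutators in $\calD$ and, using linearity, that it suffices to treat a single homogeneous component $a^{(\mu-j)}_{(m-k)}$ of $a$, modulo a symbol of order $(\mu-N,m-M)$ with $\re(\mu-N)<-n$, $\re(m-M)<-n$, on which $\tau$ and $\TR$ both equal the operator trace. \textbf{Step 2.} For such a homogeneous component, write it as $[x]^{s}\cdot b$ where $b$ has $x$-order $m-k-s$; choosing $s$ with $\re s$ large we push the $x$-order below $-n$, and similarly in $\xi$ using a factor $[\xi]^{t}$. \textbf{Step 3.} Interpolate: consider the holomorphic family $A(w)=(\text{symbol}\cdot[x]^{-w}[\xi]^{-w})(x,D)$ (a holomorphic family in the sense defined in Section \ref{sec:zero1.3}); by Proposition \ref{zero6.2} both $w\mapsto\tau(A(w))$ and $w\mapsto\TR A(w)$ extend meromorphically, they agree for $\re w$ large (both equal $\tr$), hence they agree wherever both are holomorphic — and one checks the potential pole at $w=0$ is absent unless $\mu=-n-1$ or $m=-n-1$, in which case the residue is exactly the finite-dimensional functional of the listed components. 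Here I would need to know that $\tau\circ A(\cdot)$ is actually meromorphic; this follows by expressing $A(w)$ near any point as $\TR$-part plus commutators plus a trace-class family and using that $\tau$ of the latter is holomorphic.

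\textbf{Main obstacle.} The delicate point is Step 3: showing that $w\mapsto\tau(A(w))$ is meromorphic with the \emph{same} principal parts as $w\mapsto\TR A(w)$ at the points where $\mu(w)$ or $m(w)$ hits $-n+\nz_0$, using only the abstract axioms (1)--(3). One cannot invoke any formula for $\tau$; instead one must produce, locally near each such $w_0$, an explicit decomposition $A(w)=(\text{operator with symbol of very negative order, holomorphic near }w_0) + \sum_i [B_{i,0}(w),B_{i,1}(w)] + R(w)$ with $R(w)$ trace class and holomorphic, where the "very negative order" piece is what carries the pole and agrees with the corresponding piece of $\TR A(w)$ by the already-known formula in Proposition \ref{zero6.2}. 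Constructing the commutators $[B_{i,0},B_{i,1}]$ that realize a given homogeneous symbol component modulo lower order — essentially solving $\{x\cdot\partial_\xi - \xi\cdot\partial_x\}$-type equations at the symbol level, or rather the $SG$-analogue using the vector fields generating dilations in $x$ and in $\xi$ separately — is where all the real work sits; the obstruction to solvability is a residue on the sphere $|x|=1$ or $|\xi|=1$, and its non-vanishing in the cases $\mu=-n-1$, $m=-n-1$ is exactly why those cases are genuinely exceptional. Once this decomposition is in hand, parts (a), (b), (c) follow by reading off the residues from Proposition \ref{zero6.2}.
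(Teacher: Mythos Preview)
Your core intuition—writing homogeneous components as commutators via the dilation vector fields—is exactly what the paper does, but you have wrapped it in an analytic-continuation layer that creates more work than it saves and that you cannot actually justify from the axioms. The paper dispenses with the holomorphic family $A(w)$ entirely and applies Euler's identity directly.

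Here is the paper's argument. Decompose $a$ via its double asymptotic expansion into pieces $a^j$, $a_k$, $a^j_k$ that are homogeneous in $\xi$ (of degree $\mu-j$), in $x$, or in both, plus a trace-class remainder $r_N$. For $a^j$, Euler's relation gives
\[
a^j=\frac{1}{n+\mu-j}\sum_{i=1}^n\partial_{\xi_i}\bigl(\xi_i\,a^j\bigr)
\]
modulo a symbol compactly supported in $\xi$ (hence trace class, since $a^j$ has $x$-order $m-N-1$). Because $\partial_{\xi_i}b(x,D)=-i[x_i,b(x,D)]$, the right-hand side is a sum of commutators $[x_i,a^j(x,D)D_{x_i}]$, on which $\tau$ vanishes by axiom~(2)—\emph{provided} the factor $a^j(x,D)D_{x_i}$ lies in $\calD$. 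A symmetric argument with $[D_{x_i},x_iB]$ handles $a_k$, and both together handle $a^j_k$. That is the whole proof of (a).

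The obstruction in (b) and (c) is not a sphere residue or a failure of Euler's identity; it is purely a domain issue. When $\mu=-n-1$ and $j=0$, the operator $a^0(x,D)D_{x_i}$ has $\xi$-order $-n\in\gz_{\ge-n}$, so it is not in $\calD$ and axiom~(2) cannot be invoked for the commutator. For this single component the trick is unavailable, and after further decomposing $a^{(-n-1)}$ in $x$ one is left with exactly the finitely many $a^{(-n-1)}_{(m-k)}$ listed.

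Your Step~3 is the real problem. The axioms (1)--(3) say nothing about continuity of $\tau$, so there is no a~priori reason $w\mapsto\tau(A(w))$ should be even measurable, let alone meromorphic. You recognize this and propose to cure it by producing, near each $w_0$, a decomposition of $A(w)$ into commutators plus trace class plus a very-negative-order piece. But that decomposition \emph{at $w=0$} is already the whole theorem: once you have it, apply $\tau$ term by term and you are done. The family $A(w)$ and Proposition~\ref{zero6.2} never enter. (Also: the relevant vector fields are the radial ones $\xi\cdot\partial_\xi$ and $x\cdot\partial_x$, not $x\cdot\partial_\xi-\xi\cdot\partial_x$.)
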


In \cite{MSS2} the authors have shown that the analog of conditions $(1)-(3)$ 
determines trace functionals uniquely for classical pseudodifferential operators on closed
manifold: The only regularized trace is the Kontsevich-Vishik trace. 
See also Lesch and Neira \cite{LN} for an extension.
The current setting is more complex due 
to the double grading with respect to the orders $\mu$ and $m$, respectively. 

\begin{proof} 
Let $a\in \smmu_\cl$ with $\mu,m\in\cz\setminus\gz_{\ge-n}$ and let $\kappa(x)$ and 
$\chi(\xi)$ be two 0-excision functions which are $1$ for $|x|\ge1$ and $|\xi|\ge1$, 
respectively. For convenience we use the short-hand notation $\tau(a)=\tau(a(x,D))$. 

a$)$ Let $\mu\not=-n-1$ and $m\not=-n-1$. We write   
 $$a=\sum_{j=0}^N\chi a^{(\mu-j)}+\sum_{k=0}^N\kappa a_{(m-k)}
   -\sum_{j,k=0}^N\kappa\chi a^{(\mu-j)}_{(m-k)}+r_N$$
with a remainder $r_N\in S^{\mu-N-1,m-N-1}$, cf.\ Lemma 2.1 in \cite{MSS}. 
Defining 
 $$a^j=\chi\Big(a^{(\mu-j)}-\sum_{k=0}^N\kappa a^{(\mu-j)}_{(m-k)}\Big),
   \quad 
   a_k=\kappa\Big(a_{(m-k)}-\sum_{j=0}^N\chi a^{(\mu-j)}_{(m-k)}\Big),
   \quad
   a^j_k=\kappa\chi a^{(\mu-j)}_{(m-k)},
$$
we can rewrite this identity as 
 $$a=\sum_{j=0}^Na^j+\sum_{k=0}^N a_k+
     \sum_{j,k=0}^N a^j_k+r_N.$$
Choosing $N$ large enough, $r_N(x,D)$ is trace class and therefore 
$\tau(r_N)=\TR (r_N)$. Moreover, 
$a^j\in S^{\mu-j,m-N-1}$ is positively homogeneous of degree $\mu-j$ in 
$|\xi|\ge1$. Using Euler's identity for homogeneous functions we thus obtain 
that $a^j$ differs from 
 $$b^j(x,\xi):=\frac{1}{n+\mu-j}\sum_{i=1}^n\partial_{\xi_i}(a^j(x,\xi)\xi_i)$$
by a symbol which has order $m-N-1$ in $x$ and compact support in $\xi$, hence is trace class. Since 
 $$b^j(x,D)=\frac{-i}{n+\mu-j}\sum_{i=1}^n [x_i,a^j(x,D)D_{x_i}]$$
is a sum of commutators, $\tau(b^j)=0$. Hence $\tau(a^j)=\TR (a^j)$ for any 
$j$.\footnote{In case $\mu=-n-1$ 
this argument breaks down for $j=0$, because then $x_ia^0(x,\xi)\xi_i$ has $\xi$-order 
$-n$, hence is not an element of $\calD$.}  
Analogously, $a_k(x,D)$ differs from 
 $$b_k(x,D)=\frac{-i}{n+m-k}\sum_{i=1}^n [D_{x_i},x_ia_k(x,D)]$$
by a trace class operator, thus also $\tau(a_k)=\TR (a_k)$ for any $k$. 
Finally, $a^j_k$ differs from 
 $$b^j_k(x,\xi):= \frac{1}{n+\mu-j}\sum_{i=1}^n
     \partial_{\xi_i}\big(a^j_k(x,\xi)\xi_i\big)$$
by a remainder $r^j_k(x,\xi)$ which is compactly supported in $\xi$ and positively 
homogeneous of degree $m-k$ in $|x|\ge1$. Therefore $r^j_k$ differs from 
 $$c^j_k(x,\xi):= \frac{1}{n+m-k}\sum_{i=1}^n\partial_{x_i}
   \big(x_i r^j_k(x,\xi)\big)$$
by a remainder which is compactly supported in $(x,\xi)$, hence trace class. 
Applying $\tau$ to $b^j_k(x,D)$ and $c^j_k(x,D)$ gives zero, since both these 
operators are sums of commutators. We conclude that also $\tau(a^j_k)=\TR (a^j_k)$. 

b$)$ Now let $\mu=-n-1$. If $m<-n$ then $a(x,D)$ is trace class and $\tau(a)=\TR(a)$. 
Thus we may assume that 
$m>-n$ and that $m$ is not integer. With $N:=\entier{\re m+n}+1$ we then proceed as in a$)$ 
and obtain that 
 $$(\tau-\TR)(a)=(\tau-\TR)\Big( a^0+\sum_{k=0}^N a^0_k\Big)
    =(\tau-\TR)(\chi a^{(\mu)}).$$
It follows that 
 $$(\tau-\TR)(a)= (\tau-\TR)\Big(\sum_{k=0}^N \kappa\chi a^{(\mu)}_{(m-k)}\Big),$$
since the difference of $\chi a^{(\mu)}$ with the last sum is of trace class. 
This shows b$)$, and c$)$ is verified analogously. 
\end{proof}


\section{Regularized Trace-Functionals and Determinants on Manifolds with Ends}
The results of the previous sections extend naturally to an $n$-dimensional 
SG-compatible manifold $\mathcal M$ with ends, cf. \cite{Schr0}. 
We shall follow here the set-up devised by Battisti and Coriasco in 
\cite[Section 4]{BaCo1}, which makes it possible to work with classical symbols:
The manifold $\mathcal M$ consists of a compact manifold $\mathcal M_0$ with boundary  
to which a finite number of ends $E_1, E_2, \ldots $ is attached. 
Topologically, each end $E_j$ has the structure of a half-cylinder, 
$]1,\infty[\,\times C_j$, where $C_j$ is a compact 
$(n-1)$-dimensional manifold. In \cite{BaCo1}, all $C_j$ are assumed to be
spheres, which is not necessary.

We assume that $C_j$ is covered by finitely many charts ${C}_{jk}$ homeomorphic 
to open subsets $U_{jk}$ of the sphere $\mathbb S^{n-1}$ and that there are coordinate
homeomorphisms $\kappa_{jk}\colon\,]1,\infty[\, \times C_{jk}\to E_{jk}= \{x\in \rz^n\st x/|x|\in U_{jk}, |x|>1\}$  such that the
transition maps $\kappa_{jk}\kappa_{jl}^{-1}$ are of the form 
$\Phi: x\mapsto |x|\varphi(x/|x|)$ with a diffeomorphism $\varphi$ of the corresponding open
subsets of $\mathbb S^{n-1}$. They thus satisfy the relation 
\begin{eqnarray}\label{chco}
\Phi(\lambda x ) = \lambda \Phi(x), \quad\lambda\ge 1.
\end{eqnarray}

We can then introduce the spaces 
$\mathcal S(\mathcal M)$ and $\mathcal S(\mathcal M\times\mathcal M)$ 
of rapidly decreasing functions on $\mathcal M$ and $\mathcal M\times\mathcal M$, respectively.
We call an operator $R: \cicomp(\mathcal M)\to {\mathcal C}^\infty(\mathcal M)$ regularizing, provided 
that its integral kernel with respect to the above coordinates is in 
$\mathcal S(\mathcal M\times\mathcal M).$

On each $E_j$, we introduce a special partition of unity: We pick a partition of unity 
$\{\psi_{jk}\}$ on $C_j$, subordinate to the cover $C_{jk}$, and define $\{\tilde\psi_{jk}\}$ on 
$E_j$ by $\tilde \psi_{jk}(x) = \psi_{jk}(x/|x|)$. Similarly we choose functions $\eta_{jk}\in \cicomp(C_{jk})$ 
with $\psi_{jk}\eta_{jk}=\psi_{jk}$ and let $\tilde \eta_{jk}(x) =\eta_{jk}(x/|x|)$. 
Changing the notation we then obtain a finite partition of unity $\{\psi_l\}$ and a corresponding 
set of cut-off functions $\{\eta_l\}$ on $\mathcal M$ with $\psi_l\eta_l=\psi_l$ 
by modifying the $\tilde \psi_{jk}$ and $\tilde \eta_{jk}$ near $|x|=1$ and picking corresponding
functions on $\mathcal M_0$.  

We call an operator $A:\cicomp(\mathcal M)\to \ci(\mathcal M)$ a pseudodifferential operator with local 
symbols in $S^{\mu,m}_{\cl}$, provided that for each $l$, the operator $\psi_l A \eta_l$ is - in local 
coordinates - a pseudodifferential operator with symbol in $S^{\mu,m}_{\cl}$, and $\psi_lA(1-\eta_l)$ 
is regularizing. This is well-defined in view of \eqref{chco}. The definition extends to the case of operators 
acting on sections of vector bundles with transition functions of corresponding type. 

Parameter-ellipticity -- as defined in Section  \ref{sec:zero5} --
allows the representation of  the resolvent as a parameter-depending parametrix and the
construction of  complex powers along the lines of \cite{Seel}. 
The complex powers are again classical SG-pseudodifferential operators as a consequence 
of the explicit formulas for the local symbols obtained in \cite{MSS} and the specific form 
of the changes of coordinates. For more details see also  \cite{BaCo1}. 

Given a pseudodifferential operator $A$ on $M$ with local symbols in $S^{\mu,m}_{\cl}$, 
let $K_A(x,y)$ be the distributional kernel 
and $\omega_{KV}(x)$ the Kontsevich-Vishik density associated to $K_A$ whose construction
was recalled in Section \ref{sec:zero1.2}. 
For $m,\mu\notin \gz_{\ge -n}$ we can then let 
\begin{eqnarray}\label{TR}
\TR(A) = \lim_{R\to\infty} \int_{|x|\le R} \omega_{KV}(x). 
\end{eqnarray}
Here $\int_{|x|\le R}$ means integration over the set of all elements of $ \mathcal M$
which either belong to $\mathcal M_0$ or have a local coordinate $x$ on an end with 
$|x|\le R$. The special form of the changes of coordinates along the ends makes 
\eqref{TR} well-defined.
It follows from the considerations in Section \ref{sec:zero1.2} that this coincides with the 
previous definition for the case $\mathcal M=\rz^n$.

\section{Appendix: \ SG-pseudodifferential Operators, Parameter-ellipticity, and Complex Powers}\label{sec:zero5}

The calculus of classical SG-pseudodifferential operators has been developed by
Hirschmann  \cite{Hirs}.
We summarize here the basic results including material on parameter-ellipticity, resolvent, 
and complex powers. For details we refer the reader to \cite{MSS} and  \cite{EgSc}.

In the sequel, let $[\,\cdot\,]$ denote a smooth, positive function on $\rz^n$ 
that coincides with the Euclidean norm outside the unit ball. 

\subsection{Symbols with Values in a  Fr\'echet Space}\label{sec:zero5.0}

Let $E$ be a Fr\'echet space and $\nu\in\rz$. 
By definition, $S^{(\nu)}(\rz^n,E)$ is the space of smooth functions $a:\rz^n\setminus\{0\}\to E$ 
that are positively homogeneous of degree $\nu$, i.e., 
 $$a(ty)=t^\nu a(y)\qquad\forall\;t>0\quad\forall\;y\not=0.$$
We shall denote by $S^\nu(\rz^n,E)$ the space of all smooth functions $a:\rz^n\to E$ such that 
 $$\sup_{y\in\rz^n}\|\partial^\alpha_y a(y)\| \spk{y}^{-\nu+|\alpha|}<\infty$$
for every multi-index $\alpha\in\nz_0^n$ and any choice of continuous semi-norm $\|\cdot\|$ of $E$. 
Moreover, $S^\nu_\cl(\rz^n,E)$ is the subspace consisting of all such symbols $a$ that additionally 
have asymptotic expansions into homogeneous components, in the sense that there exist 
functions $a_{(\nu-j)}\in S^{(\nu-j)}(\rz^n,E)$, $j=0,1,2,\ldots$  such that 
 $$a(y)-\sum_{j=0}^{N-1}\chi(y)a_{(\nu-j)}(y)\;\in\; S^{\nu-N}(\rz^n,E)$$
for any choice of $N\in\nz_0$ and some 0-excision function $\chi$ (i.e., $\chi$ is a smooth function 
on $\rz^n$  which vanishes near zero and is $\equiv1$ outside the unit-ball).

In case $E=\cz$ we obtain the standard classes of scalar-valued $($classical$)$ symbols. In this case we 
suppress $E=\cz$ from the notation.

\subsection{$SG$-pseudodifferential Operators}\label{sec:zero5.1}

Using the above notion of  Fr\'echet space valued symbols let us define 
 $$\smmu=\smmu(\rz^n\times\rz^n):=S^m\big(\rz^n_x,S^\mu(\rz^n_\xi)\big),\qquad 
     \smmu_\cl=\smmu_\cl(\rz^n\times\rz^n):=S^m_\cl\big(\rz^n_x,S^\mu_\cl(\rz^n_\xi)\big),$$ 
where $m$ and $\mu$ are arbitrary real numbers. We consider elements from these spaces as 
functions on $\rz^n_x\times\rz^n_\xi\to\cz$ $($the definitions can be modified in an obvious 
way to allow also values in the $(k\times k)$-matrices, covering the case of systems$)$. With 
$a\in\smmu$ we can associate sequences of symbols being homogeneous with respect to $x$ or 
$\xi$, which we shall denote by 
 $$a_{(m-j)}\in S^{\mu,(m-j)}_\cl:=S^{(m-j)}(\rz^n_x,S^\mu_\cl(\rz^n_\xi)),\qquad 
    a^{(\mu-k)}\in S^{(\mu-k),m}_\cl:=S^{(\mu-k)}(\rz^n_{\xi},S^m_\cl(\rz^n_x)),$$ 
respectively. Note that these homogeneous components are compatible in the sense that 
  $$(a^{(\mu-k)})_{(m-j)}=(a_{(m-j)})^{(\mu-k)}=: a^{(\mu-k)}_{(m-j)}\;\in\; 
      S^{(\mu-k),(m-j)}:=S^{(\mu-k)}(\rz^n_{\xi},S^{(m-j)}(\rz^n_x)).$$
We also shall make use of symbols which are smoothing in one or both variables, namely 
 $$S^{\mu,-\infty}:=\mathop{\mbox{\large$\cap$}}_{m\in\rz}\,\smmu,\qquad
   S^{-\infty,m}:=\mathop{\mbox{\large$\cap$}}_{\mu\in\rz}\,\smmu,\qquad   
   S^{-\infty,-\infty}:=\mathop{\mbox{\large$\cap$}}_{\mu,m\in\rz}\,\smmu,$$
and similarly for the subspaces of classical symbols.  

We recall the following useful characterization of classical symbols:

\begin{proposition}\label{zero5.2.5}
Let $U$ be the open unit ball in $\rz^n$ centered at 0, and $\phi:U\to\rz^n$ the diffeomorphism 
given by $\phi(y)=y/{\sqrt{1-|y|^2}}$. 
A function $a$  on $\rz^n$ belongs to  $S^0_\cl$ if and only if $a\circ\phi$ 
extends to  a smooth function on the closure $\overline{U}$.
\end{proposition}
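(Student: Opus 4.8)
The plan is to prove both implications directly from the definition of classical symbols of order $0$, reducing everything to the behaviour of $a$ near infinity and using the diffeomorphism $\phi$ to convert "homogeneous asymptotic expansion at $\infty$" into "smoothness at the boundary sphere $\partial U$". First I would handle the easy direction: suppose $b:=a\circ\phi$ extends smoothly to $\overline U$. Since $\phi:U\to\rz^n$ is a diffeomorphism and $|\phi(y)|=|y|/\sqrt{1-|y|^2}\to\infty$ as $|y|\to 1$, boundedness of $b$ and all its derivatives on the compact set $\overline U$ together with the chain rule (estimating the derivatives of $\phi^{-1}(x)=x/\sqrt{1+|x|^2}$, which are $O(\spk{x}^{-|\alpha|})$) immediately gives $|\partial^\alpha a(x)|\le C_\alpha\spk{x}^{-|\alpha|}$, i.e.\ $a\in S^0$. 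For classicality one must produce the homogeneous components $a_{(-j)}$; these come from the Taylor expansion of $b$ at the boundary. Writing a boundary point as $\omega\in\mathbb S^{n-1}$ and expanding $b$ in powers of $1-|y|$ (equivalently in powers of $|x|^{-1}$ after unwinding $\phi$), the $j$-th Taylor coefficient, suitably re-homogenized, yields a function on $\rz^n\setminus\{0\}$ positively homogeneous of degree $-j$, and Taylor's theorem with remainder gives exactly the estimate $a-\sum_{j<N}\chi a_{(-j)}\in S^{-N}$.

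For the converse, assume $a\in S^0_\cl$ with homogeneous expansion $a\sim\sum_{j\ge0}a_{(-j)}$. Near $|y|=1$ we have $|x|=|\phi(y)|$ large, and I would substitute the expansion: each term $\chi(x)a_{(-j)}(x)=\chi(x)|x|^{-j}a_{(-j)}(x/|x|)$ becomes, under $x=\phi(y)$, a function of the form $\big(\text{smooth function of }|y|\big)\cdot a_{(-j)}(y/|y|)$, and since $|x|^{-1}=\sqrt{1-|y|^2}/|y|$ is smooth up to $|y|=1$ (note $1-|y|^2$ vanishes to first order, but $\sqrt{1-|y|^2}$ is still $C^\infty$ as a function of the single variable $1-|y|^2\ge 0$ only after one checks evenness — more cleanly, $|x|^{-2}=(1-|y|^2)/|y|^2$ is manifestly smooth on a neighbourhood of $\overline U$, so every \emph{even} power of $|x|^{-1}$ is smooth, and odd powers require a small additional argument), one sees that each truncation $\sum_{j<N}\chi a_{(-j)}$ pulls back to a function that is $C^N$ up to the boundary, with the remainder $a-\sum_{j<N}\chi a_{(-j)}\in S^{-N}$ pulling back to something vanishing to order $N$ at $\partial U$ together with its derivatives of order $<N$. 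Letting $N\to\infty$ and invoking Borel's theorem / the standard fact that a function all of whose one-sided Taylor expansions at the boundary converge in this sense extends to a $C^\infty$ function on $\overline U$, one concludes that $a\circ\phi\in C^\infty(\overline U)$.

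The main obstacle I anticipate is the odd-power issue just flagged: $\sqrt{1-|y|^2}$, and hence $|x|^{-1}=|y|^{-1}\sqrt{1-|y|^2}$, is \emph{not} smooth across the sphere $|y|=1$ if one insists on smoothness on a full $\rz^n$-neighbourhood, but it \emph{is} smooth as a function on the closed ball $\overline U$ because there $1-|y|^2\ge 0$ and $s\mapsto\sqrt s$ is smooth on $(0,\infty)$ with the only issue at $s=0$, where however the homogeneous component $a_{(-j)}$ with $j$ odd must be shown not to obstruct $C^\infty$-extendability. The clean resolution is to work with $u:=\sqrt{1-|y|^2}$ as the defining function of the boundary, so that $|x|^{-1}=u/|y|$ is genuinely smooth on $\overline U$, $|x|^{-j}=(u/|y|)^j$ is smooth for every $j$ (odd and even alike) on $\overline U$, and the whole pull-back argument goes through uniformly; the only care needed is that the expansion of $a$ in $|x|^{-1}$ matches the Taylor expansion of $a\circ\phi$ in the boundary-defining function $u$, which is a bookkeeping check rather than a genuine difficulty. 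Everything else — the chain-rule estimates for $\phi$ and $\phi^{-1}$, the seminorm bookkeeping, the appeal to Taylor's theorem with remainder in both directions — is routine and I would not spell it out in detail.
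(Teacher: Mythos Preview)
The paper does not actually prove this proposition; it is merely recalled as a known characterization. So there is no argument to compare yours against, and I assess your proposal on its own.

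You correctly single out the odd-power issue as the crux, but your resolution is wrong. You assert that $u=\sqrt{1-|y|^2}$ is ``genuinely smooth on $\overline U$'' and hence that $|x|^{-j}=(u/|y|)^j$ is smooth up to the boundary for every $j$. It is not: $\partial_{y_i}u=-y_i/\sqrt{1-|y|^2}$ blows up as $|y|\to 1$, so $u$ is not even $C^1$ on $\overline U$ in the standard sense (all partial derivatives extending continuously, equivalently restriction of a smooth function from a neighbourhood). This is not a repairable technicality; it exposes a defect in the statement itself. Take $a(x)=\langle x\rangle^{-1}\in S^{-1}_\cl\subset S^0_\cl$: one computes $a\circ\phi(y)=\sqrt{1-|y|^2}$, which is not in $C^\infty(\overline U)$, contradicting the ``only if'' direction. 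Conversely, in your Taylor-expansion argument for the ``if'' direction, the standard boundary defining function $1-|y|^2$ equals $\langle x\rangle^{-2}$, so one only ever produces homogeneous components of degrees $0,-2,-4,\ldots$ --- a strict subclass of $S^0_\cl$.

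The underlying reason is that the smooth structure on $\overline U$ inherited from $\rz^n$ does not match the one on the radial compactification of $\rz^n$: under $\phi$ the natural defining function $\langle x\rangle^{-1}$ at infinity becomes $\sqrt{1-|y|^2}$, the square root of a standard defining function on $\overline U$. The statement (and then your outline, with no odd/even worry at all) becomes correct if one replaces $\phi$ by $y\mapsto y/(1-|y|^2)$, so that $|x|^{-1}=(1-|y|^2)/|y|$ is genuinely smooth up to $\partial U$; alternatively one may keep $\phi$ but read ``smooth on $\overline U$'' with respect to the upper-hemisphere smooth structure in which $\sqrt{1-|y|^2}$ \emph{is} an admissible boundary defining function. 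With $\phi$ as written and the ordinary $C^\infty(\overline U)$, no proof can succeed.
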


Using that $a\in S^m_\cl$ if and only if $[\,\cdot\,]^{-m}a\in S^0_\cl$, there is an 
obvious generalization of Proposition \ref{zero5.2.5} to the case of arbitrary order. 
Moreover, 
$a\in S^{0,0}_\cl$ if and only if $b(y,\zeta):=a(\phi(y),\phi(\zeta))$ 
extends smoothly to $\overline{U}\times \overline{U}$. 

For $a\in\smmu$, the pseudodifferential operator 
$a(x,D):\calS(\rz^n)\to\calS(\rz^n)$ has continuous extensions to maps
 $$a(x,D):H^{s,\delta}(\rz^n)\longrightarrow H^{s-\re\mu,\delta-\re m}(\rz^n)$$
for the weighted Sobolev spaces
 $$H^{r,\rho}(\rz^n)=\{u\in\calS^\prime(\rz^n)\st\spk{\cdot}^{-\rho}u\in H^r(\rz^n)\},
   \qquad r,\rho\in\rz.$$
The symbol of an operator $a(x,D)$ is uniquely determined.  The class of 
$SG$-pseudodifferential operators is closed under composition and 
formal adjoints with respect to the $L_2$-scalar product on $\rz^n$. 
The standard asymptotic expansion formulae for the symbols of composition 
and adjoint are valid, e.g., given $a_j\in S^{\mu_j,m_j}$, $j=0,1$, 
the symbol $a_0\# a_1$ of the composition satisfies  
 $$a_0\#a_1(x,\xi)-\sum_{|\alpha|=0}^{N-1}
   \frac{1}{\alpha!}\partial^\alpha_\xi a_0(x,\xi) D^\alpha_x a_1(x,\xi)
   \;\in\; S^{\mu_0+\mu_1-N,m_0+m_1-N}, \quad N\in \nz_0.$$

\subsection{Parameter-ellipticity and Resolvent}\label{sec:zero5.2}

Let $\Lambda$ be a sector as in \ref{zeroi.C}. A symbol $a\in\smmu$ is called 
\textit{$\Lambda$-elliptic} if there exist constants $C,R\ge0$ such that
 $$\text{\rm spec}(a(x,\xi))\cap\Lambda=\emptyset,
   \quad\;|(x,\xi)|\ge R$$
and
 $$|(\lambda-a(x,\xi))^{-1}|\le C\spk{x}^{-m}\spk{\xi}^{-\mu},
   \qquad\lambda\in\Lambda, \ |(x,\xi)|\ge R.$$

One can always modify a $\Lambda$-elliptic symbol in order to have $R=0$ above. 
In fact, if 
\begin{equation}\label{zero5.A}
 \wt{a}(x,\xi):=a(x,\xi)+(L+1)(1-\chi(x,\xi)),\qquad L=\max_{|(x,\xi)|\leq R}|a(x,\xi)|,
\end{equation} 
with a zero excision function $\chi$ such that $\chi(x,\xi)=0$ for $|(x,\xi)|\leq R$, 
then $\wt{a}$ differs from $a$ by a remainder in
$S^{-\infty,-\infty}$ and satisfies the $\Lambda$-ellipticity conditions with $R=0$. 

We summarize the main consequences of $\Lambda$-ellipticity in the following theorem. 

\begin{theorem}\label{zero5.3}
Let $a\in S^{\mu,m}$ with $\mu>0$, $m\ge0$, be $\Lambda$-elliptic. 
\begin{itemize}
 \item[a)] $\lambda-a(x,D):H^{\mu,m}(\rz^n)\to L_2(\rz^n)$ is bijective for all 
  sufficiently large $\lambda\in\Lambda$. 
\end{itemize}
The resolvent can be described as a parameter-dependent pseudodifferential operator: 
Let $b_0(x,\xi,\lambda)=(\lambda-\wt{a}(x,\xi))^{-1}$ with $\wt{a}$ from \eqref{zero5.A}, 
and 
 $$b_{-k}(x,\xi,\lambda)\!=\!\!\sum_{\substack{j+|\alpha|=k\\j<k}}\!\frac{1}{\alpha!}
   (\partial^\alpha_\xi b_{-j})(x,\xi,\lambda)\,(D^\alpha_x\wt{a})(x,\xi)\,
    b_0(x,\xi,\lambda),\qquad k\ge1.$$
Then there exists a symbol $b(x,\xi,\lambda)$ such that 
\begin{itemize}
 \item[b)] for any $N\in\nz_0$ and all $\alpha,\beta\in\nz_0^q$ 
   $$\Big|\partial^\alpha_\xi\partial^\beta_x\Big(b(x,\xi,\lambda)-
     \sum_{k=0}^{N-1}b_{-k}(x,\xi,\lambda)\Big)\Big|
     \le 
     C\,(|\lambda|+\spk{x}^m\spk{\xi}^\mu)^{-1}\spk{x}^{-N-|\beta|}
     \spk{\xi}^{-N-|\alpha|},$$
  uniformly for $x,\xi\in\rz^n$ and $\lambda\in\Lambda$, 
 \item[c)]  $|\lambda|^2\big\{(\lambda-a(x,D))^{-1}-b(x,D,\lambda)\big\}\in 
  S^{-\infty,-\infty}$ 
  uniformly for large $\lambda\in\Lambda$. 
\end{itemize}
\end{theorem}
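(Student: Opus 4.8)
The plan is to build $b$ as a formal parametrix of $\lambda-a(x,D)$ in a parameter-dependent $SG$-calculus and then to convert it into the genuine resolvent by a Neumann series. The only analytic input beyond bookkeeping is the \emph{interpolated} resolvent bound
$$|(\wt a(x,\xi)-\lambda)^{-1}|\le C\,(|\lambda|+\spk{x}^m\spk{\xi}^\mu)^{-1},\qquad \lambda\in\Lambda,\ (x,\xi)\in\rz^{2n},$$
which I would extract from $\Lambda$-ellipticity by splitting $\Lambda$, for fixed $(x,\xi)$, into the region where $|\lambda|$ is small compared to $\spk{x}^m\spk{\xi}^\mu$ — where one uses the ellipticity estimate $|(\lambda-\wt a)^{-1}|\le C\spk{x}^{-m}\spk{\xi}^{-\mu}$ together with $|\lambda|\le C\spk{x}^m\spk{\xi}^\mu$ — and the complementary region, where a Neumann expansion of $\lambda^{-1}(1-\lambda^{-1}\wt a)^{-1}$ and the bound $|\wt a(x,\xi)|\le C\spk{x}^m\spk{\xi}^\mu$ do the job. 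Differentiating the resolvent identity $\partial(\lambda-\wt a)^{-1}=(\lambda-\wt a)^{-1}(\partial\wt a)(\lambda-\wt a)^{-1}$ by the Leibniz rule and inserting $\wt a\in S^{\mu,m}$ and the displayed bound yields $|\partial_\xi^\alpha\partial_x^\beta b_0|\le C_{\alpha\beta}(|\lambda|+\spk{x}^m\spk{\xi}^\mu)^{-1}\spk{x}^{-|\beta|}\spk{\xi}^{-|\alpha|}$ uniformly in $\lambda\in\Lambda$; that is, $b_0$ lies in a natural parameter-dependent $SG$-symbol class. An induction on $k$, feeding these estimates into the recursion for $b_{-k}$ and using that $\spk{x}^m\spk{\xi}^\mu b_0$ is bounded, shows that $b_{-k}$ obeys the same estimate with the extra gain $\spk{x}^{-k}\spk{\xi}^{-k}$, and moreover that for $k\ge1$ each $b_{-k}$ carries at least two resolvent factors $(|\lambda|+\spk{x}^m\spk{\xi}^\mu)^{-1}$, so $b_{-k}=O(|\lambda|^{-2})$ modulo symbols of negative order in both $x$ and $\xi$.

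Next I would carry out the asymptotic summation. Since the seminorms of $b_{-k}$ improve in both $x$ and $\xi$ at rate $k$, uniformly in $\lambda\in\Lambda$, a Borel-type construction in the parameter-dependent calculus produces a symbol $b(x,\xi,\lambda)$ with $b-\sum_{k<N}b_{-k}$ satisfying the bound claimed in b$)$ for every $N$, which proves b$)$. The recursion defining the $b_{-k}$ is precisely the one making the Leibniz product $(\lambda-\wt a)\#\sum_{k<N}b_{-k}$ equal $1$ modulo a term of order $\le-N$; hence $(\lambda-\wt a)\#b-1=\wt r$ with $\wt r$ of order $-\infty$ in $(x,\xi)$ and, by the estimates above, $O(|\lambda|^{-1})$, and the analogous statement holds with $b$ on the left.

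Finally I would invert and estimate the error. Choosing $N>\max(\mu,m)$, the truncated error $\wt r_N$ of $(\lambda-\wt a)\#\sum_{k<N}b_{-k}-1$ has symbol of negative order in $x$ and $\xi$ with the extra factor $(|\lambda|+\spk{x}^m\spk{\xi}^\mu)^{-1}$, so $\wt r_N(x,D,\lambda)$ has $L_2$-operator norm $O(|\lambda|^{-1})$; for $|\lambda|$ large, $1-\wt r_N(x,D,\lambda)$ is therefore invertible by Neumann series with uniformly bounded inverse, which gives a two-sided inverse of $\lambda-\wt a(x,D)$ — and, since $\wt a-a\in S^{-\infty,-\infty}$ carries no $\lambda$, of $\lambda-a(x,D)$, proving a$)$ together with the mapping properties recalled in Section \ref{sec:zero5.1}. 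Writing $(\lambda-a(x,D))^{-1}-b(x,D,\lambda)$ as a composition in which every factor is either a resolvent or carries a factor $(|\lambda|+\spk{x}^m\spk{\xi}^\mu)^{-1}$, and using that $b-b_0$ is already $O(|\lambda|^{-2})$ modulo arbitrarily smoothing symbols, then shows that $|\lambda|^2\{(\lambda-a(x,D))^{-1}-b(x,D,\lambda)\}$ stays bounded in $S^{-\infty,-\infty}$ uniformly for large $\lambda\in\Lambda$, which is c$)$.

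The conceptual skeleton — formal parametrix plus Neumann series — is routine; the genuine work, and the step I expect to be the main obstacle, is making \emph{all} symbol estimates uniform in $\lambda\in\Lambda$ down to $(x,\xi)=0$ in the doubly-filtered $SG$-calculus: the interpolated resolvent bound and its $(x,\xi)$-derivative analogues — where the normalization $\wt a$ of \eqref{zero5.A} and the full strength of $\Lambda$-ellipticity (not mere ellipticity) are essential, and where one must make sure the constants do not degenerate along the boundary rays of $\Lambda$ as $|\lambda|\to\infty$ — and the uniformity of the composition and asymptotic-summation procedures in the extra parameter.
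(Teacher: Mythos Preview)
The paper does not actually prove this theorem: it is stated in the Appendix as background material, with the sentence ``For details we refer the reader to \cite{MSS} and \cite{EgSc}'' covering the entire section. Your outline is the standard construction carried out in those references --- build the formal parametrix $b\sim\sum b_{-k}$ in the parameter-dependent $SG$-calculus from the interpolated resolvent bound for $b_0=(\lambda-\wt a)^{-1}$, perform the Borel summation uniformly in $\lambda$, and upgrade to the true resolvent via a Neumann series --- so there is nothing to compare; your sketch is correct and matches what the cited sources do.
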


For a classical symbol $a$  $\Lambda$-ellipticity can be characterized by spectral properties 
of the associated homogeneous principal symbols: 

\begin{remark}\rm\label{rem:ell-classical}
Let $a\in S^{\mu,m}_\cl$. Then $a$ is $\Lambda$-elliptic if, and only if, 
\begin{itemize}
 \item[$(1)$] $\text{\rm spec}(a^{(\mu)}(x,\omega))\cap\Lambda=\emptyset$ for all
  $x\in\rz^n$ and $|\omega|=1$,
 \item[$(2)$] $\text{\rm spec}(a_{(m)}(\theta,\xi))\cap\Lambda=\emptyset$ for all
  $\xi\in\rz^n$ and $|\theta|=1$, and 
 \item[$(3)$] $\text{\rm spec}(a^{(\mu)}_{(m)}(\theta,\omega))\cap\Lambda=\emptyset$ 
  for all $|\theta|=1$ and $|\omega|=1$.
\end{itemize}
In this case the parametrix $b(x,\xi,\lambda)$ of Theorem $\mathrm{\ref{zero5.3}}$ has 
a certain classical structure; for details see \cite{MSS}. It follows that a classical symbol 
which is $\Lambda$-elliptic is also elliptic in a slightly larger sector with interior containing 
$\Lambda\setminus\{0\}$. 
\end{remark} 

\subsection{Complex Powers}\label{sec:zero5.3}

Assume from now on that $a\in S^{\mu,m}_\cl$ with $\mu>0$ and $m\ge0$ is $\Lambda$-elliptic,  
that $\lambda-a(x,D)$ is invertible for all $0\not=\lambda\in\Lambda$, and that
$\lambda=0$ is at most an isolated spectral point. 

For $z\in\cz$ with $\re z<0$ we define the complex power 
\begin{equation*}
 a(x,D)^{z}=\frac{1}{2\pi i}\int_{\partial\Lambda_\eps}
 \lambda^{z}(\lambda-a(x,D))^{-1}\,d\lambda,
\end{equation*}
where $\Lambda_\eps=\Lambda\cup\{|\lambda|\le\eps\}$ for a sufficiently small $\eps>0$, 
and $\partial\Lambda_\eps$ is a parametrization of the boundary of $\Lambda_\eps$, 
the circular part being traversed clockwise. The power $\lambda^z=e^{z\log\lambda}$ 
is determined by  the main branch of the logarithm. 
For arbitrary $z\in\cz$ we set 
 $$a(x,D)^z=a(x,D)^k\,a(x,D)^{z-k},\qquad\text{where $k\in\nz_0$ with $\re z-k<0$}.$$

Essentially by replacing the resolvent $(\lambda-a(x,D))^{-1}$ in the above Dunford 
integral by the parametrix $b(x,D,\lambda)$, one obtains: 

\begin{theorem}\label{zero5.4}
$a(x,D)^z$ is a pseudodifferential operator. If $a(x,\xi,z)$ denotes the symbol of $a(x,D)^z$ then  
$a(x,\xi,z)\in S^{\mu z,mz}_\cl$ is a holomorphic family. 
\end{theorem}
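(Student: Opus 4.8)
The plan is to follow Seeley's construction of complex powers in the $SG$-adapted form of \cite{MSS}, first for $\re z<0$ and then for general $z$. Starting from the Dunford integral defining $a(x,D)^z$, I would use Theorem \ref{zero5.3}(c) to replace the resolvent $(\lambda-a(x,D))^{-1}$ by the parameter-dependent parametrix $b(x,D,\lambda)$: the error equals $|\lambda|^{-2}$ times an operator with symbol in $S^{-\infty,-\infty}$ uniformly in $\lambda\in\Lambda$, so integrating $\lambda^z$ against it over $\partial\Lambda_\eps$ converges (locally uniformly in $z$ with $\re z<0$) to a symbol in $S^{-\infty,-\infty}$ depending holomorphically on $z$. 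It therefore suffices to analyse
$$
\sigma(x,\xi,z)=\frac{1}{2\pi i}\int_{\partial\Lambda_\eps}\lambda^z\,b(x,\xi,\lambda)\,d\lambda .
$$

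For the symbol estimates I would split $b=\sum_{k<N}b_{-k}+r_N$ and use the bounds of Theorem \ref{zero5.3}(b): each resulting term is dominated by an integral of the form $\int_{\partial\Lambda_\eps}|\lambda|^{\re z}(|\lambda|+\spk{x}^m\spk{\xi}^\mu)^{-1}\,|d\lambda|\le C\spk{x}^{m\re z}\spk{\xi}^{\mu\re z}$ for $\re z<0$, and differentiation in $x$ and $\xi$ only improves the exponents. Taking $N$ large then gives $\sigma(\cdot,\cdot,z)\in S^{\mu z,mz}$ with seminorm bounds locally uniform in $z$.

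The substantial point is to show that $\sigma(\cdot,\cdot,z)$ is \emph{classical}. Here I would exploit the classical structure of the parametrix recorded in Remark \ref{rem:ell-classical}: $b_0=(\lambda-\wt a)^{-1}$ with $\wt a\in S^{\mu,m}_\cl$ classical in both variables, and each $b_{-k}$ is a finite sum of products of $\xi$-derivatives of $b_0$ with $x$-derivatives of $\wt a$ and factors $b_0$. Expanding $\wt a$ into its homogeneous components $a^{(\mu-k)}$ in $\xi$ and $a_{(m-j)}$ in $x$ produces asymptotic expansions of $b$ whose terms are homogeneous in $(\xi,\lambda^{1/\mu})$, resp.\ (when $m>0$) in $(x,\lambda^{1/m})$; substituting $\lambda\mapsto\rho^{\mu}\lambda$, resp.\ $\lambda\mapsto\rho^{m}\lambda$, in $\int_{\partial\Lambda_\eps}\lambda^z(\cdot)\,d\lambda$ then exhibits the corresponding contribution to $\sigma$ as positively homogeneous of the expected degree $\mu z-k$, resp.\ $mz-j$ --- the would-be logarithmic terms being absent because $\Lambda$-ellipticity permits rotating the integration contour off the ray where the resolvent symbol could be singular. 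Collecting these expansions, and doing the analogous bookkeeping for the mixed components $a^{(\mu-k)}_{(m-j)}$ as in \cite{MSS}, yields $\sigma(\cdot,\cdot,z)\in S^{\mu z,mz}_\cl$; alternatively, classicality can be checked through the criterion of Proposition \ref{zero5.2.5} applied to the $\spk{\cdot}$-normalised symbol.

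Holomorphy in $z$ is then automatic: $\lambda\mapsto\lambda^z$ is entire and all the integrals above converge locally uniformly for $\re z$ in compact subsets of $(-\infty,0)$, so $z\mapsto\spk{\xi}^{-\mu z}\spk{x}^{-mz}\sigma(x,\xi,z)$ is a holomorphic $S^{0,0}_\cl$-valued map, i.e.\ $a(x,\xi,z)$ is a holomorphic family in $S^{\mu z,mz}_\cl$ on $\{\re z<0\}$. For arbitrary $z$ I would invoke $a(x,D)^z=a(x,D)^k\,a(x,D)^{z-k}$ with $k\in\nz_0$, $\re(z-k)<0$: iterating the composition formula shows $a(x,D)^k$ has classical symbol in $S^{k\mu,km}_\cl$, and composing with the holomorphic family already constructed gives a holomorphic family in $S^{\mu(z-k)+k\mu,\,m(z-k)+km}_\cl=S^{\mu z,mz}_\cl$ on all of $\cz$, the independence of the choice of $k$ following from $a(x,D)^{k+1}=a(x,D)\,a(x,D)^k$ at the operator level. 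The step I expect to be the main obstacle is the classicality argument of the third paragraph: simultaneously tracking the polyhomogeneous structure in $x$ and in $\xi$ under the $\lambda$-integration and verifying the cancellation of the logarithmic-in-$\lambda$ terms --- exactly the place where $\Lambda$-ellipticity and the admissible contour rotation are used, and the technical core of the analysis in \cite{MSS}.
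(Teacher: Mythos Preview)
Your proposal is correct and follows precisely the route the paper indicates: the paper does not give a detailed proof but only the one-line sketch ``essentially by replacing the resolvent $(\lambda-a(x,D))^{-1}$ in the above Dunford integral by the parametrix $b(x,D,\lambda)$, one obtains [the theorem]'', deferring the details to \cite{MSS}. Your outline expands exactly this, including the extension to general $z$ via $a(x,D)^z=a(x,D)^k a(x,D)^{z-k}$.

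One small remark on your phrasing: the ``would-be logarithmic terms'' you mention in the classicality step are not really the issue here. The point is simply that under the substitution $\lambda\mapsto\rho^\mu\lambda$ (resp.\ $\lambda\mapsto\rho^m\lambda$) the contour $\partial\Lambda_\eps$ is sent to a rescaled contour which can be deformed back to $\partial\Lambda_\eps$ through a region where the integrand is holomorphic; this is what yields exact homogeneity of the resulting components, with no logarithmic contributions appearing at all. The contour deformation you allude to is used for this homogeneity argument rather than to ``cancel'' anything. This is a cosmetic point and does not affect the validity of your plan.
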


\vspace{2mm}
{\em Note}.  Most of the results in this article were obtained in 2006 as
preliminary work on a different project. Meanwhile, they have proven to be 
useful in several instances \cite{BaMZ,BaCo1,Lopes}, and it seemed appropriate
to make them available to a wider audience.

\begin{small}
\bibliographystyle{mn}

\end{small}

\end{document}